\crefname{section}{§}{§§}
\Crefname{section}{§}{§§}
\newtheorem{theorem}{Theorem}[section]
\newtheorem{lemma}[theorem]{Lemma}
\newtheorem{proposition}[theorem]{Proposition}
\newtheorem{corollary}[theorem]{Corollary}
\newtheorem{conj}[theorem]{Conjecture}
\newtheorem{ass}[theorem]{Assumption}
\newtheorem{que}[theorem]{Question}
\theoremstyle{definition}
\newtheorem{definition}[theorem]{Definition}
\newtheorem{example}[theorem]{Example}
\theoremstyle{remark}
\newtheorem{remark}[theorem]{Remark}
\newtheorem{notation}[theorem]{Notation}
\numberwithin{equation}{section}
\newcommand{\Sym}{\text{Sym}}
\newcommand{\Supp}{\text{Supp}}
\newcommand{\Aut}{\text{Aut}}
\DeclareMathAlphabet{\mathpzc}{OT1}{pzc}{m}{it}
\newcommand{\bb}[1]{\mathbb{#1}}
\newcommand{\tx}[1]{\text{#1}}
\newcommand{\cal}[1]{\mathcal{#1}}
\newcommand{\surj}{\twoheadrightarrow} 
\DeclareMathOperator{\sheafHom}{\mathscr{H}\text{\kern -3pt {\calligra\large om}}\,}
\begin{document}

\title{On the Inertia Conjecture and its generalizations}
\author{Soumyadip Das}
\address{Statistics and Mathematics Unit,
Indian Statistical Institute, Bangalore Centre, Bangalore 560059.}
\email{ soumyadip\_rs@isibang.ac.in}

\subjclass[2010]{11S15, 14H30 (Primary) 13B05, 14G17 (Secondary)}

\keywords{Galois Cover, Inertia Conjecture, Ramification}

\begin{abstract}
Studying two point branched Galois covers of the projective line we prove the Inertia Conjecture for the Alternating groups $A_{p+1}$, $A_{p+3}$, $A_{p+4}$ for any odd prime $p \equiv 2 \pmod{3}$ and for the group $A_{p+5}$ when additionally $4 \nmid (p+1)$ and $p \geq 17$. We obtain a generalization of a patching result by Raynaud which reduces these proofs to showing the realizations of the \'{e}tale Galois covers of the affine line with a fewer candidates for the inertia groups above $\infty$. We also pose a general question motivated by the Inertia Conjecture and obtain some affirmative results. A special case of this question, which we call the Generalized Purely Wild Inertia Conjecture, is shown to be true for the groups for which the purely wild part of the Inertia Conjecture is already established. In particular, we show that if this generalized conjecture is true for the groups $G_1$ and $G_2$ which do not have a common quotient, then the conjecture is also true for the product $G_1 \times G_2$.
\end{abstract}

\maketitle

\section{Introduction}
Let $k$ be an algebraically closed field of characteristic $p>0$ and $U$ be a smooth connected affine $k$-curve. The full structure of the \'{e}tale fundamental group $\pi_1(U)$ is not known in general. To this end one interesting problem is to understand the set $\pi_A(U)$ of isomorphism classes of the finite (continuous) group quotients of $\pi_1(U)$, or equivalently the finite groups that occur as the Galois groups of Galois \'{e}tale connected covers of $U$. In 1957 Abhyankar conjectured (known as Abhyankar's Conjecture on the affine curves, \cite{6}, \cite[Conjecture 3.2]{survey_paper}; now a Theorem due to Serre, Raynaud and Harbater) on which groups occur in the set $\pi_A(U)$. In particular, this shows that $\pi_A(\bb{A}^1)$ is the set of isomorphism classes of the quasi $p$-groups, i.e. the groups $G$ which are generated by its Sylow $p$-subgroups. Given $G \in \pi_A(U)$, the next natural problem is to study the inertia groups that occur over the points in $X - U$. By \cite[Chapter IV, Corollary 4]{Serre_loc}, any inertia group must be of the form $P \rtimes \bb{Z}/m$ for a $p$-group $P$ and a coprime to $p$ integer $m$. Studying the branched covers of $\bb{P}^1$ using explicit equations Abhyankar proposed the following conjecture, known as the Inertia Conjecture (IC).

\begin{conj}\label{IC}
(IC, \cite[Section 16]{Abh_01}) Let $G$ be a finite quasi $p$-group. Let $I$ be a subgroup of $G$ which is an extension of a $p$-group $P$ by a cyclic group of order prime-to-$p$. Then there is a connected $G$-Galois cover of $\bb{P}^1$ \'{e}tale away from $\infty$ such that $I$ occurs as an inertia group at a point over $\infty$ if and only if the conjugates of $P$ in $G$ generate $G$.
\end{conj}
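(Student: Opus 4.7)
The plan is to split the ``if and only if'' into the two directions, which differ enormously in depth. The necessity (``only if'') is a quick consequence of the proved Abhyankar conjecture for $\mathbb{A}^1$, while the sufficiency (``if'') is Abhyankar's open Inertia Conjecture itself; a full-strength attack is unlikely to succeed uniformly, and I would expect progress only for specific families of $G$, in the spirit of what the paper carries out for certain alternating groups.

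For the ``only if'' direction, suppose $f \colon Y \to \mathbb{P}^1$ is a connected $G$-Galois cover étale away from $\infty$ realizing $I = P \rtimes \mathbb{Z}/m$ as an inertia group above $\infty$. By Raynaud's theorem (Abhyankar's Conjecture for $\mathbb{A}^1$), $G$ must be a quasi $p$-group. Let $N \trianglelefteq G$ be the normal closure of $P$, that is, the subgroup generated by all $G$-conjugates of $P$. The intermediate cover $Y/N \to \mathbb{P}^1$ is étale on $\mathbb{A}^1$ with at worst tame ramification above $\infty$, since the image of $I$ in $G/N$ is cyclic of order prime to $p$. But the tame fundamental group of $\mathbb{A}^1_k$ is trivial (any non-trivial tame cover of $\mathbb{A}^1$ branched only at $\infty$ would violate the Riemann--Hurwitz formula), so $G/N$ is trivial and the conjugates of $P$ generate $G$.

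For the ``if'' direction, which is the deep conjecture, I would pursue formal/rigid patching in the style of Harbater and Raynaud. Given a quasi $p$-group $G$ and a subgroup $I = P \rtimes \mathbb{Z}/m$ whose $p$-Sylow $P$ normally generates $G$, the strategy is: (i) decompose $G$ as generated by quasi $p$-subgroups $H_1,\dots,H_r$ whose collection of conjugates of $p$-parts still normally generate $G$; (ii) use Abhyankar's Conjecture on $\mathbb{A}^1$ to realize each $H_i$ as a Galois group of an étale connected cover of $\mathbb{A}^1$, arranging the inertia above $\infty$ to match a prescribed local piece of $I$; (iii) patch these local $H_i$-covers, together with a global tame $\mathbb{Z}/m$-cover carrying the prime-to-$p$ part of $I$, into a single $G$-cover of $\mathbb{P}^1$ branched only over $\infty$ with inertia $I$. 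The generalized patching theorem referred to in the abstract should reduce the problem further, to realizing étale $G$-covers of $\mathbb{A}^1$ whose inertia at $\infty$ is constrained to a small list of candidates, weaker than the full IC but still substantive.

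The main obstacle, of course, is that IC is a famous open problem. Even granted such a reduction, ensuring that the prescribed $I$ (not merely \emph{some} subgroup whose wild part normally generates $G$) appears as an inertia group requires tight control on the semistable reduction of the cover, and Raynaud's auxiliary cover technique affords this only when $G$ has enough structure (tractable Sylow subgroups, small solvable radical, accessible subgroup lattice). For a general quasi $p$-group no uniform construction is known. I would therefore not attempt a uniform proof but, mirroring the paper, target specific families such as $A_{p+k}$ for small $k$, where the conjugacy and subgroup structure of alternating groups can be combined with the refined patching to produce the required local covers explicitly.
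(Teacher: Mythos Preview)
Your assessment is correct and matches the paper precisely. The statement is a \emph{conjecture}, not a theorem, and the paper does not prove it; immediately after stating it the paper writes: ``If a $G$-Galois cover exists as in Conjecture~\ref{IC}, then it is well known that the conjugates of the $p$-subgroup $P$ in $G$ generate $G$ and this follows from the fact that the tame fundamental group of the affine line is trivial. The other direction of the conjecture is proved to be true only in a few cases \ldots\ and even the PWIC remains wide open at this moment.'' Your proof of the ``only if'' direction (pass to the quotient by the normal closure of $P$, observe the resulting cover is tame over $\mathbb{A}^1$, invoke $\pi_1^t(\mathbb{A}^1)=1$) is exactly the standard argument the paper alludes to, and your recognition that the ``if'' direction is open---to be attacked only for specific families via patching and explicit equations---is the stance the paper itself adopts.
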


A special case of the above conjecture when $I$ is a $p$-group is known as the Purely Wild Inertia Conjecture (PWIC for short).

\begin{conj}\label{PWIC}
(PWIC) Let $G$ be a finite quasi $p$-group. A $p$-subgroup $P$ of $G$ occurs as the inertia group at a point above $\infty$ in a connected $G$-Galois cover of $\mathbb{P}^1$ branched only at $\infty$ if and only if the conjugates of $P$ generate $G$.
\end{conj}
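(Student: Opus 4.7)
The ``only if'' direction is essentially formal. Suppose $f\colon Y \to \bb{P}^1$ is a connected $G$-Galois cover branched only at $\infty$ and $P$ occurs as an inertia group at some point $y$ over $\infty$. Because $G$ acts transitively on the fibre $f^{-1}(\infty)$, every inertia group there is a conjugate of $P$. Let $N \trianglelefteq G$ be the normal subgroup generated by all conjugates of $P$. Then the intermediate cover $Y/N \to \bb{P}^1$ is unramified over $\bb{A}^1$ by hypothesis and also unramified over $\infty$ (since the inertia groups over $\infty$ die in $G/N$). Hence $Y/N \to \bb{P}^1$ is \'{e}tale, so trivial over the simply connected target, and $N = G$; the conjugates of $P$ generate $G$.

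For the ``if'' direction --- the substantive content of the conjecture --- the plan is to use the formal patching machinery developed by Harbater and Raynaud in their proof of Abhyankar's conjecture. Fix a finite quasi $p$-group $G$ and a $p$-subgroup $P$ whose conjugates generate $G$. First I would choose conjugates $P_1 = g_1 P g_1^{-1}, \dotsc, P_n = g_n P g_n^{-1}$ whose union generates $G$; this is the purely group-theoretic input. Next I would build local models on a formal neighbourhood of $\infty$: on a semistable model obtained by inserting a tree of projective lines over $\infty$, I would attach an Artin--Schreier-type $P$-cover on one leaf (to create the required wild inertia $P$), together with auxiliary covers on the other components whose dual-graph data assemble to $G$. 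Finally I would apply the patching (or thickening) theorem in the formal/rigid framework to realize these local pieces as a single connected $G$-Galois cover of $\bb{P}^1$ branched only at $\infty$, with inertia $P$ at a point above $\infty$.

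The main obstacle, and the reason PWIC remains open in general, is the construction of the local pieces at $\infty$ with the prescribed inertia $P$ while still recovering $G$ after gluing. Prescribing the inertia up to conjugacy is a very tight constraint: the Artin--Schreier parameters must be chosen so that each local cover has the right wild part, yet the global group generated by the patched data is exactly $G$ and not a proper subgroup or quotient. Even the group-theoretic step of finding a ``compatible'' generating system of conjugates of $P$ (for instance, one that matches a known decomposition of $G$ arising from subcovers) is subtle. This is why progress on PWIC has proceeded group by group --- solvable groups (Harbater), certain families of alternating and symmetric groups, and the further alternating cases treated in the present paper --- rather than by a uniform construction. Any complete proof would have to overcome this local-to-global realizability obstruction in full generality.
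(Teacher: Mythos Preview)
The statement you are addressing is labelled as a \emph{conjecture} in the paper, and the paper does not claim to prove it; the introduction explicitly says that ``even the PWIC remains wide open at this moment.'' So there is no proof in the paper against which to compare your proposal.

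Your treatment of the ``only if'' direction is correct and agrees with what the paper records as known: the quotient cover $Y/N \to \bb{P}^1$ is connected and \'etale everywhere, hence trivial since $\bb{P}^1$ is simply connected, forcing $N = G$. The paper phrases the same fact (for the full Inertia Conjecture) as a consequence of the triviality of the tame fundamental group of $\bb{A}^1$; in the purely wild case these amount to the same thing.

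For the ``if'' direction, what you have written is not a proof but a sketch of the formal-patching philosophy, together with an honest acknowledgement that the decisive step --- constructing local pieces with prescribed inertia $P$ that globalize to a connected cover with group exactly $G$ --- is precisely where the method stalls in general. That assessment is accurate and is why the paper treats PWIC as open, establishing it only for specific families of groups (see Corollary~\ref{cor_all}) rather than uniformly. So your proposal correctly identifies the status of the problem, but it should not be presented as a proof of the full statement: the hard direction remains a genuine gap, not in your write-up specifically, but in the literature.
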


If a $G$-Galois cover exists as in Conjecture \ref{IC}, then it is well known that the conjugates of the $p$-subgroup $P$ in $G$ generate $G$ and this follows from the fact that the tame fundamental group of the affine line is trivial. The other direction of the conjecture is proved to be true only in a few cases (see \cite{BP}, \cite{12}, \cite{8}, \cite{15}, \cite{DK} and \cite{survey_paper} for more details) and even the PWIC remains wide open at this moment. Using a Formal Patching technique Harbater has shown that the PWIC is true when $P$ is a Sylow $p$-subgroup of $G$. One particular class of groups of interest are the Alternating groups. It is known that the IC is true for the group $A_p$ when $p \geq 5$ (\cite[Theorem 1.2]{BP}) and for $A_{p+2}$ when $p \equiv 2 \pmod{3}$ is an odd prime (\cite[Theorem 1.2]{8}). Now onward we assume that $p$ is an odd prime.

In this paper we prove the following result.

\begin{theorem}\label{intro_IC}
When $p \equiv 2 \pmod{3}$ is an odd prime, the IC is true for the groups $A_{p+1}$, $A_{p+3}$ and $A_{p+4}$. When $p \equiv 2 \pmod{3}$, $4 \nmid (p+1)$ and $p \geq 17$ the IC is true for $A_{p+5}$.
\end{theorem}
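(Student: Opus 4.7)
My plan is to reduce the IC for each $A_n$ with $n \in \{p+1, p+3, p+4, p+5\}$ to a short list of base realizations using the generalization of Raynaud's patching theorem announced in the abstract, and then to supply those bases by exploiting the previously established IC for $A_p$ (Brewis--Pries, \cite{BP}) and, for $n \geq p+3$, also the IC for $A_{p+2}$ (\cite{8}).

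First, I would enumerate the candidate inertia groups. Since $p \leq n < 2p$, any nontrivial $p$-subgroup $P \leq A_n$ is cyclic of order $p$, generated by a single $p$-cycle $\sigma$, and the conjugates of $P$ automatically generate $A_n$. Writing $N_{S_n}(\langle \sigma \rangle) \cong \mathrm{AGL}_1(\F_p) \times S_{n-p}$ and intersecting with $A_n$ yields a completely explicit finite list of admissible subgroups $I = \Z/p \rtimes \Z/m$, parameterized by the divisors of $p-1$ and the cyclic subgroups of the small symmetric group $S_{n-p}$.

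Second, I would apply the paper's generalized patching result which, as advertised in the abstract, reduces the realization of a target inertia group $I$ at $\infty$ in a one-point étale $A_n$-cover of $\bb{A}^1$ to the construction of a two-point $A_n$-cover of $\bb{P}^1$, branched only at $0$ and $\infty$, with controlled tame inertia at $0$ and inertia $I$ at $\infty$. This trades a wild constraint for a tame one and substantially cuts down the cases requiring direct construction.

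Third, I would build these two-point covers by bootstrapping. Embedding $A_p$ (resp.\ $A_{p+2}$) as the pointwise stabilizer of the $n-p$ (resp.\ $n-p-2$) letters fixed by $\sigma$ inside $A_n$ and inducing the étale one-point cover guaranteed by the known IC produces a candidate $A_n$-cover; formally patching a suitable tame $A_n$-cover above $0$ à la Harbater--Stevenson then absorbs the spurious ramification coming from the induction and adjusts the inertia at $\infty$ to the prescribed $I$. The hypothesis $p \equiv 2 \pmod{3}$ enters here to invoke the $A_{p+2}$ base case and to ensure that a $3$-cycle on the fixed letters is available with the required parity. Connectedness of the glued cover follows from the conjugates of $P$ generating $A_n$.

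The main obstacle will be the case $A_{p+5}$. The normalizer $N_{A_{p+5}}(P)$ has the richest subgroup lattice in this range, mirroring that of $S_5$, and the extra hypotheses $4 \nmid (p+1)$ and $p \geq 17$ should be exactly what is needed to ensure, respectively, that an element of the required cyclic order in the prime-to-$p$ quotient of $I$ can be realized as an even permutation of the $5$ fixed letters, and that the tame $A_{p+5}$-cover patched above $0$ can be chosen so that the glued cover remains connected. These constraints should drop out of the combinatorics of cycle-type parities inside $N_{A_{p+5}}(P)$.
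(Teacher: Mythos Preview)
Your Steps 1 and 2 match the paper (Remark~\ref{rmk_reduction_Alt} and Lemma~\ref{lem_Alt_FP}). The gap is Step~3. The generalized Raynaud patching (Corollary~\ref{cor_Raynaud_gen}) glues building blocks $(G_j,I_j)$ at a \emph{single} branch point and requires each $I_j\subset I$ to have the same prime-to-$p$ order $m$ as $I$; in particular every useful block must have $\tau\in I_j$, since otherwise it contributes only a tame cover of $\bb{A}^1$. Lemma~\ref{lem_Alt_FP} already pushes this mechanism as far as it goes: it disposes of every target $I$ fixing at least two of the letters $1,\ldots,d$, leaving precisely the cases where $I$ fixes at most one. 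For $d\geq p+3$ those residual $I=\langle\tau\rangle\rtimes\langle\theta^i\omega\rangle$ have $\omega$ moving at least $d-p-1$ of the extra letters, so the tame generator of any admissible $I_j$ already moves those letters and $I_j$ cannot sit inside a smaller $A_{d'}$. For $d=p+1$ the target $I=\langle\tau\rangle\rtimes\langle\theta^2\rangle$ does lie in $A_p$, but then the only blocks your inputs supply are copies of $A_p$ inside $A_{p+1}$ containing $\tau$, all of which fix the letter $p+1$; no collection of them together with $I$ generates $A_{p+1}$. In neither situation can you bootstrap from the known IC for $A_p$ or $A_{p+2}$. Your suggestion that patching a tame cover at $0$ ``adjusts the inertia at $\infty$'' is also not how these results operate: patching at $0$ leaves the inertia at $\infty$ unchanged.

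The paper handles these residual cases by a construction absent from your proposal: explicit degree-$d$ maps $\bb{P}^1\to\bb{P}^1$ of the form $\prod_i(y-\alpha_i)^{n_i}=x\prod_l(y-\beta_l)^{m_l}$ (Proposition~\ref{prop_ded_d_cover}), whose Galois closure has directly computable group and ramification, followed by a Kummer pullback that kills the tame branch at $0$ (Corollaries~\ref{cor_t_odd_21case}--\ref{cor_t_odd_22case}, Theorems~\ref{thm_A_p+1}--\ref{thm_A_p+5}). The hypotheses $p\equiv 2\pmod 3$, and for $A_{p+5}$ also $4\nmid(p+1)$ and $p\geq 17$, arise inside these explicit constructions---making the parameters $\alpha_i,\beta_l$ distinct in $k$ (Lemma~\ref{lem_Ass_holds}) and ensuring the Kummer degree is coprime to the inertia components one wishes to retain---rather than from parity of fixed-letter permutations or from connectedness issues in a patching argument.
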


The result for $A_{p+1}$ is of special interest since till now there was no example of an $A_{p+1}$-Galois \'{e}tale cover of the affine line such that the tame part of the inertia group at a point above $\infty$ is non-trivial. The strategy of the proofs is to first identify the inertia groups that are needed to be realized over $\infty$ for the corresponding Galois \'{e}tale covers of the affine line. Using Abhyankar's Lemma (\cite[XIII, Proposition 5.2]{10}) and Lemma \ref{lem_Alt_FP} we reduce this list of potential inertia groups. Then we construct two points branched covers of $\bb{P}^1$ given by some explicit affine equations (Section \cref{sec_explicit_eq}) to resolve these cases.

Now let $U$ be as before and let $U \subset X$ be its smooth projective completion. Let $\phi\colon Y \to X$ be a connected Galois cover with group $G$ which is \'{e}tale away from $B \coloneqq X - U$ and let $I_x$ occurs as an inertia group above $x \in B$. Set $H \coloneqq \langle p(I_x)^G | x \in B \rangle$ where $p(I_x)$ denotes the Sylow $p$-subgroup of $I_x$. Then the cover $\phi$ factors through a connected $G/H$-Galois cover $\psi \colon Y' \to X$ \'{e}tale away from $B$ such that $I_x/I_x \cap H$ occurs as an inertia group above $x \in B$. One can again do the same for the newly obtained cover $\psi$. Motivated by this we pose Question \ref{que_most_gen}, which we call $Q[r,X,B,G]$. Here we state two special cases of this question for which we have obtained several positive results.

\begin{que}\label{intro_gen_que}
(See Question \ref{que_most_gen}; $Q[r,X,B,G]$) Let $r \geq 1$ be an integer, $X$ be a smooth projective connected $k$-curve. Let $B \coloneqq \{x_1,\cdots, x_r\}$ be a set of closed points in $X$. Let $G$ be a finite group, $P_1$, $\cdots$, $P_r$ be (possibly trivial except for $P_1$) $p$-subgroups of $G$. Set $H \coloneqq \langle P_i^G | 1 \leq i \leq r \rangle$. Assume that either of the following holds.
\begin{enumerate}
\item $G = \langle P_1^G, \cdots, P_r^G \rangle$;
\item $H = \langle P_1^H, \cdots, P_r^H \rangle$.
\end{enumerate}
For $1 \leq i \leq r$ let $I_i$ be a subgroup of $G$ which is an extension of the $p$-group $P_i$ by a cyclic group of order prime-to-$p$ such that there is a connected $G/H$-Galois cover $\psi$ of $X$ \'{e}tale away from $B$ and $I_i/I_i \cap H$ occurs as an inertia group above $x_i$, $1 \leq i \leq r$. Does there exist a connected $G$-Galois cover $\phi$ of $X$ \'{e}tale away from $B$ dominating the cover $\psi$ such that $I_i$ occurs as an inertia group above the point $x_i \in B$ ?
\end{que}

Using Formal Patching and studying the branched covers of curves given by explicit equations we obtain some affirmative answers to the above question. We have the following result towards $Q[1,X,\{*\},S_d]$.

\begin{theorem}\label{intro_thm_Sym}
(Corollary \ref{cor_gen_que_sym}) Let $p$ be a prime $\geq 5$ and $X$ be any smooth projective $k$-curve of genus $\geq 1$. Then for $r = 1$, Question \ref{intro_gen_que} has an affirmative answer for the group $S_p$ and when $p \equiv 2 \pmod{3}$ for the groups $S_{p+1}$, $S_{p+2}$, $S_{p+3}$, $S_{p+4}$.
\end{theorem}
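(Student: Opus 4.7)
The plan is to combine the Inertia Conjecture for the alternating groups $A_d$ --- due to Bouw--Pries for $A_p$, to Das--Kumar for $A_{p+2}$ when $p\equiv 2\pmod{3}$, and to Theorem \ref{intro_IC} for $A_{p+1}$, $A_{p+3}$, $A_{p+4}$ --- with a formal patching argument over the higher-genus curve $X$.

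The first step is a structural reduction on the possible data. For $d \in \{p, p+1, p+2, p+3, p+4\}$ the Sylow $p$-subgroups of $S_d$ are cyclic of order $p$, generated by a single $p$-cycle; since $p$ is odd, any such $p$-subgroup already lies in $A_d$, and its normal closure in $S_d$ is all of $A_d$. Hence in Question \ref{intro_gen_que} we are necessarily in case (2), with $H = A_d$ and $G/H \cong \mathbb{Z}/2$, and the input cover $\psi$ is a connected $\mathbb{Z}/2$-Galois cover of $X$ \'etale away from $*$ with inertia $I_1/(I_1\cap A_d)$ at $*$. The IC for $A_d$ then supplies a connected $A_d$-Galois \'etale cover $\varphi_0\colon Z \to \mathbb{A}^1$ whose inertia at $\infty$ is $I_1\cap A_d$; this is the building block carrying the purely wild part of $I_1$.

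The second step is to transplant $\varphi_0$ and glue it with $\psi$ to produce the desired $S_d$-cover of $X$. I would invoke a formal patching statement in the style of Harbater, together with Lemma \ref{lem_Alt_FP}, to control the inertia, using the hypothesis $\operatorname{genus}(X)\geq 1$ to ensure a sufficiently rich \'etale fundamental group on $X-\{*\}$. Concretely, one patches the formal local $I_1$-cover at $*$ --- whose $A_d$-part is modelled on the local extension of $\varphi_0$ at $\infty$ and whose outer $\mathbb{Z}/2$-part is read off from the local behaviour of $\psi$ at $*$ --- with a global \'etale $A_d$-structure on $X - \{*\}$ dominating $\psi|_{X-\{*\}}$. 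The patched object is a connected $S_d$-Galois cover of $X$ \'etale away from $*$, dominating $\psi$, with $I_1$ occurring as an inertia group above $*$.

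The main obstacle is the gluing: one needs the patched cover to have Galois group exactly $S_d = A_d \rtimes \mathbb{Z}/2$, rather than a quotient such as $A_d \times \mathbb{Z}/2$ or a smaller subgroup. This amounts to synchronising the outer $\mathbb{Z}/2$-action on $A_d$ coming from conjugation inside $S_d$ with the outer action induced by $\psi$, and then verifying geometric connectedness of the patched cover. Both points should be handled by choosing the local and global gluing data compatibly and applying the connectedness criterion built into the formal patching machinery; here the genus $\geq 1$ hypothesis on $X$ is used crucially to produce enough \'etale covers of $X - \{*\}$ for the compatibility conditions to be satisfiable.
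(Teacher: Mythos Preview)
Your overall strategy --- invoke the IC for $A_d$ and then patch with the given $\mathbb{Z}/2$-cover $\psi$ --- matches the paper's, but you miss the one observation that makes the argument clean. A parity count via Riemann--Hurwitz shows that a connected $\mathbb{Z}/2$-cover of $X$ \'etale away from a single point $*$ cannot be ramified at $*$: ramification there would contribute a single term $e-1 = 1$ to the even quantity $2g_Y - 2 - 2(2g_X - 2)$. Hence $\psi$ is everywhere \'etale, the inertia $I_1/(I_1\cap A_d)$ at $*$ is trivial, and therefore $I_1 \subset A_d$. Now the IC for $A_d$ realizes the pair $(A_d, I_1)$ directly --- not merely $(A_d, I_1\cap A_d)$ --- and a single application of Harbater's patching (Theorem~\ref{thm_patching_covers}, packaged as Proposition~\ref{prop_part_most_gen} and Corollary~\ref{cor_most_gen}) glues the \'etale $\mathbb{Z}/2$-cover $\psi$ of $X$ with the $A_d$-cover of $\mathbb{P}^1$ branched only at $\infty$ to produce the desired $S_d$-cover of $X$. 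The genus $\geq 1$ hypothesis is only there so that an \'etale double cover $\psi$ can exist at all; no further ``richness'' of $\pi_1(X-\{*\})$ is invoked.

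Without this parity reduction your patching scheme has genuine problems. The ``outer $\mathbb{Z}/2$-part read off from the local behaviour of $\psi$ at $*$'' is addressing a case that never arises, so your ``main obstacle'' is a phantom. More seriously, your plan to produce ``a global \'etale $A_d$-structure on $X-\{*\}$ dominating $\psi|_{X-\{*\}}$'' cannot work as written: $A_d$ is simple for $d\geq 5$, so a connected $A_d$-cover has no nontrivial $\mathbb{Z}/2$-quotient and hence cannot dominate $\psi$. Finally, Lemma~\ref{lem_Alt_FP} concerns passing from $A_d$ to $A_{d'}$ for $d' > d$ and plays no role in this argument.
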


The above result is a corollary to Proposition \ref{prop_part_most_gen} which suggests that the answer to Question \ref{intro_gen_que} which is related to the understanding of the structure of the group $\pi_1(X - B)$ has a close connection with the IC, the tame fundamental group $\pi_1^t(X - B)$ and the group theoretic behavior of the Galois groups. Using the existence of the HKG covers we show that (Theorem \ref{thm_semidirect}) $Q[2,\bb{P}^1,\{0,\infty\},P \rtimes \bb{Z}/n]$ has an affirmative answer. For $X = \bb{P}^1$ and $r=2$ we have the following result towards $Q[2,\bb{P}^1,\{0,\infty\},S_d]$.

\begin{theorem}\label{intro_thm_general}
(Theorem \ref{thm_S_p_both_Sylow_non-trivial}, \ref{thm_S_p+1_both_Sylow_non-trivial}) For $r=2$, $X = \bb{P}^1$, Question \ref{intro_gen_que} has an affirmative answer when $p \equiv 2 \pmod{3}$, $G = S_p$ or $S_{p+1}$ and both $P_1$ and $P_2$ are non-trivial.
\end{theorem}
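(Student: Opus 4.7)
The plan is to combine the Inertia Conjecture for $A_d$ (where $d = p$ or $p+1$) with a two-point formal patching argument, and then descend along the $G/H$-cover $\psi$. Since $P_1$ and $P_2$ are non-trivial $p$-subgroups of $G = S_d$, they are Sylow $p$-subgroups, i.e.\ cyclic of order $p$ generated by a $p$-cycle; because $A_d$ is simple for $d \ge 5$, the normal closure $H = \langle P_1^G, P_2^G \rangle$ equals $A_d$. Consequently $G/H \cong \mathbb{Z}/2$, and $\psi$ must be the double cover $\mathbb{P}^1_s \to \mathbb{P}^1_t$ given by $t = s^2$, branched at $\{0, \infty\}$. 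The hypothesis that $I_i/(I_i \cap H)$ realizes the inertia $\mathbb{Z}/2$ of $\psi$ at $x_i$ forces $I_i = P_i \rtimes C_{2m_i}$ with $m_i \mid (p-1)/2$, hence $I_i \cap H = P_i \rtimes C_{m_i} \subset N_{A_d}(P_i)$.

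The core construction is an $A_d$-Galois cover $\widetilde{\phi} \colon Y \to \mathbb{P}^1_s$ branched only at $\{0, \infty\}$ with inertia $I_1 \cap H$ at $s = \infty$ and $I_2 \cap H$ at $s = 0$, equipped with a lift $\widetilde{\sigma}$ of the deck transformation $\sigma \colon s \mapsto -s$ that acts on $A_d$ by outer conjugation by a transposition. The quotient $Y/\langle \widetilde{\sigma} \rangle$ will then be the desired $S_d$-Galois cover $\phi$ of $\mathbb{P}^1_t$ dominating $\psi$, with inertia $I_i$ at $x_i$. To build $\widetilde{\phi}$, the IC for $A_d$ (\cite[Theorem 1.2]{BP} for $d = p$; Theorem \ref{intro_IC} of the present paper for $d = p+1$, where the hypothesis $p \equiv 2 \pmod 3$ enters) supplies, for each $i$, a connected $A_d$-Galois cover of $\mathbb{P}^1$ étale away from one point, with inertia $I_i \cap H$ there. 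A two-point formal patching argument in the spirit of Lemma \ref{lem_Alt_FP} then merges these one-point covers into $\widetilde{\phi}$.

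The main obstacle is arranging this patching so that $\widetilde{\phi}$ carries a compatible $\sigma$-action realizing the outer transposition automorphism of $A_d$. The approach is to choose both the local HKG covers at $0$ and $\infty$ and the étale $A_d$-torsor on $\mathbb{G}_{m,s}$ to be $\sigma$-equivariant; the existence of a global lift $\widetilde{\sigma}$ then reduces to a descent obstruction in $H^1(\langle \sigma \rangle, Z(A_d))$, which vanishes since $A_d$ is centerless for $d \ge 4$. At each branch point the resulting inertia in $\phi$ equals $I_i$: indeed, $\widetilde{\sigma}$ can be chosen to fix a point above $x_i$, and the subgroup it generates together with the $A_d$-inertia $I_i \cap H$ is precisely the index-two extension $I_i = (I_i \cap H)\cdot\langle \widetilde{\sigma} \rangle \cong P_i \rtimes C_{2m_i}$ inside $S_d$. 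This construction is an equivariant analogue of the semidirect-product patching of Theorem \ref{thm_semidirect}, combined with the inertia realizations furnished by the IC for $A_d$.
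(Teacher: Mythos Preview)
Your approach is genuinely different from the paper's, and it has a real gap. The paper never passes through an $A_d$-cover of $\mathbb{P}^1_s$ or any equivariance/descent argument. Instead, following Remark~\ref{rmk_reductions_Symm}, it reduces (via Theorem~\ref{thm_patching_covers}, patching two $S_d$-covers at a common \emph{tame} branch point) to producing, for a single fixed odd permutation $\gamma$, an $S_d$-cover of $\mathbb{P}^1$ branched only at $\{0,\infty\}$ with inertia $\langle\gamma\rangle$ over $0$ and $\langle\tau\rangle\rtimes\langle\theta^i\omega\rangle$ over $\infty$, for every admissible $(i,\omega)$. These covers are written down directly: for $S_p$ one takes the Galois closure of $y^p - y^2 - x = 0$ (Remark~\ref{rmk_Abhyankar_deg_p}), giving a transposition over $0$ and $\langle\tau\rangle\rtimes\langle\theta\rangle$ over $\infty$; for $S_{p+1}$ one uses equation~(\ref{eq_p+1}) and a $[p-2]$-Kummer pullback (as in Theorem~\ref{thm_A_p+1}). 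An $[i]$-Kummer pullback then adjusts $\theta$ to $\theta^i$ while preserving the transposition over $0$, since $i$ is odd. No equivariant patching is needed.

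The gap in your argument is the $\sigma$-equivariance of the patched $A_d$-cover $\widetilde\phi$. None of the patching results available (Corollary~\ref{cor_Raynaud_gen}, Theorem~\ref{thm_patching_covers}, or the arguments behind Lemma~\ref{lem_Alt_FP}, which in any case concerns enlarging the degree rather than two-point patching) yield covers equivariant for an involution acting by an \emph{outer} automorphism of the Galois group; the proofs pass through generic specialization in a parameter space (\cite[Lemma~3.3]{DK}) where such symmetry is not visible. Your claim that the obstruction lies in $H^1(\langle\sigma\rangle, Z(A_d))$ presupposes that the local data and the \'etale torsor on $\mathbb{G}_m$ have already been made $\sigma$-equivariant in a compatible way, which is exactly the step that is missing. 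Even granting a lift $\widetilde\sigma$, you would still need a single odd permutation $\widetilde\sigma\in S_d$ lying in (a conjugate of) $I_1\setminus A_d$ at a fixed point over $0$ \emph{and} in (a conjugate of) $I_2\setminus A_d$ at a fixed point over $\infty$; you assert this can be arranged but do not show how the patching allows it. The paper's route sidesteps all of this by building $S_d$-covers directly from explicit equations.
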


We also have the similar results (Theorem \ref{thm_S_P+2_both_Sylow_non-trivial}, \ref{thm_S_P+3_both_Sylow_non-trivial}) for the groups $S_{p+2}$ and $S_{p+3}$ with more restrictions on $p$.

One special case of Question \ref{intro_gen_que} is when $X = \bb{P}^1$ which we pose as the Generalized Inertia Conjecture (Conjecture \ref{conj_proj_gen}). Even more specializing to the case when the inertia groups are the $p$-groups we pose the Generalized Purely Wild Inertia Conjecture (GPWIC, Conjecture \ref{conj_GPWIC}). We see that for the groups for which the PWIC is already known to be true, the GPWIC is also true. Namely, we prove the following result.

\begin{theorem}
(Corollary \ref{cor_all}) Let $G$ be a quasi $p$-group, $P_1$, $\cdots$, $P_r$ are $p$-subgroups of $G$ for some $r \geq 1$ such that $G =\langle P_1^G , \cdots, P_r^G \rangle$. Let $B \coloneqq \{x_1,\cdots, x_r\}$ be a set of closed points in $\bb{P}^1$. There is a connected $G$-Galois cover of $\bb{P}^1$ \'{e}tale away from $B$ such that $P_i$ occurs as an inertia group above $x_i$ where $G$ is one of the following groups.
\begin{enumerate}
\item $G$ is a $p$-group;
\item $G$ has order strictly divisible by $p$;
\item $G = G_1 \times \cdots \times G_u$ where each $G_i$ is either a simple Alternating group of degree coprime to $p$ or a $p$-group or a simple non-abelian group of order strictly divisible by $p$.
\end{enumerate}
\end{theorem}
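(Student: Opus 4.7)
The plan is to deduce the corollary from two main ingredients developed earlier in the paper: the GPWIC (Conjecture \ref{conj_GPWIC}) for individual building-block groups, and the product theorem asserting that GPWIC is preserved under direct products $G_1 \times G_2$ whenever $G_1$ and $G_2$ share no common non-trivial quotient. Cases (1) and (2) serve as base cases; case (3) follows by iterating the product theorem and reducing to these along with the established alternating group results.

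For case (1), $G$ a $p$-group, the argument proceeds by Artin--Schreier--Witt theory: the pro-$p$ completion of $\pi_1(\mathbb{P}^1 \setminus B)$ is rich enough to realize any $p$-group quotient with prescribed local monodromy, and the normal generation condition $G = \langle P_i^G \mid 1 \leq i \leq r \rangle$ guarantees both connectedness and that the Galois group is exactly $G$. For case (2), $|G|$ strictly divisible by $p$, the Sylow $p$-subgroup is cyclic of order $p$, so each $P_i$ is trivial or cyclic of order $p$. PWIC for such $G$ is known, and I would extend it to GPWIC by formal patching: at each branch point $x_i$ with $P_i \neq 1$, construct a local cover realizing $P_i$ as inertia via the HKG covers of Theorem \ref{thm_semidirect}, and patch them to a global connected $G$-Galois cover. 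The normal generation hypothesis guarantees connectedness and the correct Galois group, via the mechanism captured in Proposition \ref{prop_part_most_gen}.

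For case (3), first group all $p$-group factors of $G$ into a single $p$-group $P$, which is handled as one block by case (1). The remaining factors are non-abelian simple groups, either simple alternating groups of degree coprime to $p$ or simple non-abelian with $p$ strictly dividing the order. Any two such groups, and any one of them paired with $P$, share no non-trivial common quotient: a non-abelian simple group has only itself and the trivial group as quotients, and it cannot equal a $p$-group or a distinct simple non-abelian group. Thus the product theorem applies iteratively and reduces GPWIC for $G$ to GPWIC for $P$ (case (1)) and for each simple factor individually, the latter being covered by case (2) or by the specific alternating group results proved earlier in the paper.

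The main obstacle is the passage from PWIC to GPWIC in case (2): the formal patching must be controlled precisely so that the assembled cover is connected and has Galois group exactly $G$ rather than a proper subgroup or a larger overgroup. The framework of Proposition \ref{prop_part_most_gen}, together with the HKG realization of Theorem \ref{thm_semidirect}, supplies the precise hypotheses under which this assembly succeeds, and verifying these hypotheses for each simple factor and each choice of local inertia is the most technical step of the argument.
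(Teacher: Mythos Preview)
Your iteration of the product theorem in case (3) has a genuine gap: Theorem \ref{thm_pdt_arbit_groups} requires the two factors to share no non-trivial quotient, and this fails as soon as two of the simple non-abelian factors are isomorphic. If, say, $G = S \times S$ for a simple non-abelian $S$, then both factors have $S$ itself as a quotient, and no ordering of the iteration avoids this obstruction. Your own phrasing (``a distinct simple non-abelian group'') silently assumes pairwise non-isomorphic factors, but the statement allows repetitions. The paper does not iterate the product theorem over the simple factors at all; instead it proves Theorem \ref{thm_GPWIC_pdt_small_order}, which handles an arbitrary finite product of such simple groups in one stroke by projecting the given $P_i$ onto subsets of factors and invoking the PWIC results for products established in \cite{DK}, together with induction on $r$ via Theorem \ref{thm_patching_covers}. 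The product theorem is then applied only once, to attach the consolidated $p$-group factor to this perfect product (perfect versus $p$-group, so no common quotient).

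For (1) and (2) your sketches invoke heavier or misaimed machinery than what the paper uses. In (1) there is no appeal to Artin--Schreier--Witt theory or to the structure of the pro-$p$ fundamental group; the paper first proves the purely group-theoretic Lemma \ref{lem_p-grp} (via the Frattini quotient) that for a $p$-group the hypothesis $G = \langle P_i^G \rangle$ already forces $G = \langle P_1,\dots,P_r \rangle$, and then inducts on $r$ with Theorem \ref{thm_patching_covers}. In (2) the point is simply that $p \,\|\, |G|$ makes every non-trivial $P_i$ a full Sylow $p$-subgroup, so each pair $(G,P_i)$ is realizable by Raynaud's theorem, and one again patches inductively on $r$. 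Neither Theorem \ref{thm_semidirect} (which is specific to groups $P \rtimes \mathbb{Z}/n$) nor Proposition \ref{prop_part_most_gen} (which needs a complement to $H$ in $G$) is used.
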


We also show that the GPWIC holds for certain product of groups if it holds for individual groups. This generalizes \cite[Corollary 4.6]{15}.

\begin{theorem}
(Theorem \ref{thm_pdt_arbit_groups}) Let $G_1$ and $G_2$ be two finite quasi $p$-groups such that they have no non-trivial quotient in common. If the GPWIC is true for the groups $G_1$ and $G_2$, then the GPWIC is also true for $G_1 \times G_2$.
\end{theorem}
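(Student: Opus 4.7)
The plan is to apply the GPWIC hypothesis to each factor $G_1$ and $G_2$ individually and then combine the resulting covers by a fiber product, using the no-common-quotient hypothesis to ensure connectedness. Set $G = G_1 \times G_2$ with projections $\pi_j \colon G \to G_j$, and for each $i$ let $P_i^{(j)} \coloneqq \pi_j(P_i)$, a $p$-subgroup of $G_j$. Projecting the generation hypothesis $G = \langle P_1^G, \ldots, P_r^G \rangle$ under $\pi_j$ yields $G_j = \langle (P_1^{(j)})^{G_j}, \ldots, (P_r^{(j)})^{G_j} \rangle$, so the hypothesis of the GPWIC is satisfied for each $G_j$.

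By the GPWIC hypothesis for each $G_j$, I obtain a connected $G_j$-Galois cover $\phi_j \colon Y_j \to \bb{P}^1$ \'{e}tale away from $B$ with $P_i^{(j)}$ occurring as an inertia group above $x_i$. The fiber product $Y \coloneqq Y_1 \times_{\bb{P}^1} Y_2$ is naturally a $G$-Galois cover \'{e}tale away from $B$. Its connectedness is the standard consequence of the no-common-quotient hypothesis: any disconnection of $Y$ would produce a common non-trivial quotient of $G_1$ and $G_2$, contradicting the assumption. Because completed local rings are compatible with fiber products, the inertia of $Y$ above $x_i$ at a point $(y_1, y_2)$ is the product $I_{y_1} \times I_{y_2}$, which up to conjugation equals $P_i^{(1)} \times P_i^{(2)}$.

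When each $P_i$ is itself of product form $P_i^{(1)} \times P_i^{(2)}$, the cover $Y$ is exactly the desired one. In the general case, $P_i$ sits as a proper subgroup of $P_i^{(1)} \times P_i^{(2)}$ via a Goursat isomorphism $P_i^{(1)}/N_i^{(1)} \cong P_i^{(2)}/N_i^{(2)}$ between common $p$-subquotients, and the fiber-product inertia is then too large. To handle this, I plan to replace the local $(P_i^{(1)} \times P_i^{(2)})$-behavior of $Y$ at each such $x_i$ by the $G$-cover induced from a $P_i$-Galois cover of $\spec k[[t_i]]$, and to glue this modified local data with the global \'{e}tale $G$-cover $Y|_{\bb{P}^1 \setminus B}$ via formal patching in the spirit of Harbater and Raynaud, exploiting the freedom in the GPWIC construction for $G_1$ and $G_2$ (for instance, in the choice of local uniformizers used to realize $P_i^{(j)}$) to match the monodromies across the Goursat diagonal. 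The main obstacle—and the crux of the argument—is this compatibility: the local $P_i$-cover must match the global \'{e}tale monodromy at $x_i$, and it is precisely here that the no-common-quotient hypothesis should be invoked, to rule out obstructions to realizing the Goursat common subquotient $Q_i$ as a coherent diagonal ramification and thereby to complete the construction of a connected $G$-Galois cover of $\bb{P}^1$ with inertia $P_i$ above $x_i$.
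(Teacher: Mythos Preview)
Your overall strategy matches the paper's: project each $P_i$ to the factors, invoke the GPWIC for $G_1$ and $G_2$, and form a fiber product, with the no-common-quotient hypothesis ensuring connectedness. You also correctly isolate the crux, namely that $P_i$ sits inside $P_i^{(1)} \times P_i^{(2)}$ as a Goursat fiber product $P_i^{(1)} \times_{Q_i} P_i^{(2)}$. However, your resolution of this point has a genuine gap.

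First, the inertia of $Y = Y_1 \times_{\bb{P}^1} Y_2$ at $(y_1,y_2)$ is not $P_i^{(1)} \times P_i^{(2)}$ in general: it equals $P_i^{(1)} \times_{Q'} P_i^{(2)}$, where $Q'$ is the Galois group of the intersection $K_{Y_1,y_1} \cap K_{Y_2,y_2}$ over $K_{\bb{P}^1,x_i}$, and this intersection is not controlled a priori. Second, and more seriously, your proposed fix---patching a local $P_i$-cover onto $Y|_{\bb{P}^1\setminus B}$---cannot work: the \'{e}tale monodromy of $Y$ around $x_i$ is already the fixed group $P_i^{(1)} \times_{Q'} P_i^{(2)}$, and formal patching cannot shrink that monodromy to a proper subgroup. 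The paper instead modifies $Y_1$ and $Y_2$ \emph{before} taking the fiber product: via a deformation argument (Lemma~\ref{lem_common_cover_for_product}), each $Y_j$ is replaced by a cover $Z_j$ with the same inertia $P_i^{(j)}$ above $x_i$ but such that the $Q_i$-subextensions $K_{Z_1,z_{1,i}}^{N_i^{(1)}}$ and $K_{Z_2,z_{2,i}}^{N_i^{(2)}}$ of $K_{\bb{P}^1,x_i}$ are \emph{isomorphic} (with $Q_i$ taken to be the maximal common quotient of $P_i^{(1)}$ and $P_i^{(2)}$). The two local extensions are then linearly disjoint over this common $Q_i$-extension, so the fiber product $Z_1 \times_{\bb{P}^1} Z_2$ has inertia exactly $P_i^{(1)} \times_{Q_i} P_i^{(2)} = P_i$ above $x_i$. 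Finally, note that the no-common-quotient hypothesis plays no role in this local alignment; it is used solely for connectedness of the fiber product, so your last sentence misattributes its function.
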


The structure of the article is as follows. In Section \cref{sec_explicit_eq} we introduce covers given by some explicit general affine equations and study their ramification behaviour. In particular, we obtain some general Symmetric and Alternating group covers of $\bb{P}^1$ \'{e}tale away from $\{0, \infty\}$ with a well described tame ramification over $0$ and a wild ramification over $\infty$ whose $p$-part is generated by a $p$-cycle. In Section \cref{sec_fp} we introduce and recall some results on constructing covers using Formal Patching technique. In Section \cref{sec_alt_IC} we see some new evidence (Theorem \ref{intro_IC}) for the IC for the Alternating group covers. In Section \cref{sec_questions} we introduce a general questions and some conjectures generalizing the Inertia Conjecture. In Sections \cref{sec_GPWIC} and \cref{sec_GPWIC_weak} we see some results in the support of the GPWIC. Finally Section \cref{sec_que_evidence} contains some general results toward Question \ref{intro_gen_que} and the particular cases of Symmetric and other groups of small degree.

\subsection*{Acknowledgements} I would like to thank my advisor, Manish Kumar, for his suggestions and comments which helped improving the structure and the style of the manuscript.

\section{Notation and Conventions}\label{sec_notation}
We fix the following notation throughout this article.

\begin{enumerate}
\item $p$ denotes an odd prime, $k$ denotes an algebraically closed field of characteristic $p$.
\item All the $k$-curves considered will be smooth connected curves, unless otherwise specified.
\item All the Alternating and Symmetric groups considered will be of degree $\geq 5$.
\item For a finite group $G$, $p(G)$ denotes the subgroup of $G$ generated by all the Sylow $p$-subgroups of $G$.
\end{enumerate}

\begin{definition}
For a finite group $G$ and a subgroup $I$ we say that the pair $(G,I)$ is \textit{realizable} if there exists a connected $G$-Galois cover of $\bb{P}^1$ branched only at $\infty$ such that $I$ occurs as an inertia group above $\infty$.
\end{definition}

In the above definition $G$ is necessarily a \textit{quasi} $p$-\textit{group} i.e. $G = p(G)$.

\begin{definition}\label{def_Kummer_pullback}
Let $n$ be coprime to $p$. \textit{The} $[n]$-\textit{Kummer cover} is the unique connected $\bb{Z}/n$-Galois cover $\psi \colon Z \cong \bb{P}^1 \to \bb{P}^1$ \'{e}tale away from $\{0,\infty\}$ over which the cover is totally ramified. Let $\phi \colon Y \to \bb{P}^1$ be a connected $G$-Galois cover. Let $W$ be a dominant component in the normalization of $Y \times_{\bb{P}^1} Z$. We say that \textit{the cover} $W \to Z$ \textit{is obtained by a pullback by the} $[n]$-\textit{Kummer cover}.
\end{definition}

\section{Constructing Covers by Explicit Equations}\label{sec_explicit_eq}
In this section we construct covers of $\bb{P}^1$ given by some explicit affine equations and study the Galois closure of these covers. Our primary interest are the cases where the Galois groups are Alternating or Symmetric groups of degree $d \geq p$. The covers will be constructed as the Galois closure of a finite degree-$d$ cover $\bb{P}^1 \to \bb{P}^1$ \'{e}tale away from $\{0,\infty\}$. Any finite cover $\bb{P}^1_y \to \bb{P}^1_x$ is given by an affine equation of the form $x f(y) - g(y) =0$ for some polynomials $f(y)$ and $g(y)$ in $k[y]$ such that they do not have any common zero. We impose conditions on these polynomials so that the resulting cover has desired inertia and Galois groups. The main results of this section will be implemented in the later parts and in particular to prove the Inertia Conjecture for certain Alternating groups (cf. Section \cref{sec_alt_IC}). Before introducing the explicit construction of covers we review and obtain some results.

Let $d \geq p$, and $\tau$ be a $p$-cycle in $S_d$. By \cite[Proposition 2.1]{DK}, for any transitive quasi $p$-subgroup $G$ of $S_d$, containing $\tau$, we have 

\begin{equation}\label{eq_theta}
N_G(\langle \tau \rangle) = (\langle \tau, \theta \rangle \times J) \cap G
\end{equation}
where $J$ is the Symmetric group on the set $\{1,\cdots, d\} \setminus \Supp(\tau)$ and $\theta \in \Sym(\Supp(\tau)) \cap N_{S_d}(\langle \tau \rangle)$ an element of order $p-1$ such that the conjugation by $\theta$ is a generator of $\Aut(\langle \tau \rangle)$. Arguing as in the proof of \cite[Proposition 4.16]{survey_paper} one obtains the following result.

\begin{lemma}\label{lem_fibres_I_action}
Let $d \geq p$, and $G$ be a transitive subgroup of $S_d$. Let $\phi \colon Z \to X$ be a $G$-Galois cover of smooth connected projective $k$-curves, and $x \in X$ be a closed point. Let $I$ occurs as an inertia group over $x$. Consider the subgroup $S_{d-1}$ of $S_d$ fixing the element $1$. Set $H \coloneqq G \cap S_{d-1}$. Let $\psi \colon Y \to X$ be the connected degree $d$ cover via which $\phi$ factors. Then $\psi^{-1}(x)$ in $Y$ is in a bijective correspondence with the set of orbits of the action of $I$ on $\{1,\cdots,d\}$. Moreover, for a point $y \in \psi^{-1}(x)$, the ramification index of $y$ over $x$ is given by the length of the corresponding orbit.
\end{lemma}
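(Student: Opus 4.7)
The plan is to express the fiber $\psi^{-1}(x)$ in terms of double cosets, and then identify these with $I$-orbits on $\{1,\dots,d\}$ via the natural $G$-equivariant bijection $\{1,\dots,d\} \cong G/H$.

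First I would fix a point $z \in \phi^{-1}(x)$ whose inertia group is exactly $I$; then $g \mapsto g \cdot z$ gives a $G$-equivariant bijection $G/I \xrightarrow{\sim} \phi^{-1}(x)$. Since $G$ acts transitively on $\{1,\dots,d\}$ with stabilizer $H$ of the element $1$, the cover $\psi\colon Y \to X$ has degree $[G:H] = d$ and can be identified with the quotient $Z/H \to X$. Consequently $\psi^{-1}(x)$ is identified with the set of $H$-orbits on $\phi^{-1}(x) \cong G/I$, i.e.\ with the double coset set $H \backslash G / I$.

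Next, the bijection $G/H \cong \{1,\dots,d\}$, $gH \mapsto g \cdot 1$, turns $I$-orbits on $\{1,\dots,d\}$ into the double coset set $I \backslash G / H$. The involution $HgI \leftrightarrow Ig^{-1}H$ gives a bijection $H \backslash G / I \cong I \backslash G / H$, yielding the desired correspondence between $\psi^{-1}(x)$ and the set of $I$-orbits on $\{1,\dots,d\}$.

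For the ramification assertion, take $y \in \psi^{-1}(x)$ corresponding to the double coset $HgI$, with preimage $gz \in Z$ having inertia group $gIg^{-1}$ in $\phi$. In the intermediate cover $Z \to Y$ the inertia at $gz$ is the stabilizer of $gz$ in $H$, namely $H \cap gIg^{-1}$, so the multiplicativity of ramification in the tower $Z \to Y \to X$ gives
\[
e(y/x) = \frac{|gIg^{-1}|}{|H \cap gIg^{-1}|} = \frac{|I|}{|I \cap g^{-1}Hg|}.
\]
On the other hand, under the bijection above $y$ corresponds to the $I$-orbit of $g^{-1} \cdot 1 \in \{1,\dots,d\}$, whose stabilizer in $I$ is $I \cap g^{-1}Hg$, so this orbit also has length $|I|/|I \cap g^{-1}Hg|$. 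The two quantities agree, giving the ramification claim. The argument is purely a bookkeeping exercise with double cosets combined with the tower formula for ramification indices; there is no genuine obstacle.
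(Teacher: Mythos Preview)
Your argument is correct and is exactly the standard double-coset / orbit--stabilizer computation one expects here. The paper does not supply its own proof of this lemma but merely refers the reader to the argument of \cite[Proposition~4.16]{survey_paper}, which proceeds along the same lines, so your approach coincides with the intended one.
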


\begin{lemma}\label{lem_transitive_grp__int_grp_structure_from_fibres}
Under the hypothesis of Lemma \ref{lem_fibres_I_action}, we have the following.
\begin{enumerate}
\item If $\psi^{-1}(x)$ consists of $s$ points with the ramification indices $n_1$, $\cdots$, $n_s$ with each $n_i$ coprime to $p$, $\sum_{i=1}^s n_i =d$, then $\phi$ is tamely ramified over $0$. If $\gamma$ is a generator of $I$, then the disjoint cycle decomposition of $\gamma$ in $S_d$ consists of $s$ cycles of length $n_1$, $\cdots$, $n_s$.\label{tame}
\item If $d=p$ and $\psi^{-1}(x)$ consists of a unique point with the ramification index $p$, then $I$ is of the form $I = \langle \tau \rangle \rtimes \langle \theta^i \rangle$ for a $p$-cycle $\tau$ and some $1 \leq i \leq p-1$. If $d \geq p+1$ and $\psi^{-1}(x)$ consists of $r+1$ points with the ramification indices $p$, $m_1$, $\cdots$, $m_r$ with all $m_l$ coprime to $p$ and $\sum_{l=1}^r m_l = d-p$, then $I$ is of the form $I = \langle \tau \rangle \rtimes \langle \theta^i \omega \rangle$ for a $p$-cycle $\tau$, $1 \leq i \leq p-1$, and an $\omega \in \tx{Sym}(\{p+1,\cdots,d\})$ having a disjoint cycle decomposition consisting of $r$ cycles of length $m_1$, $\cdots$, $m_r$. Here $\theta$ is as in Equation (\ref{eq_theta}).\label{wild}
\end{enumerate}
\end{lemma}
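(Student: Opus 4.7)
\textbf{Proof plan for Lemma \ref{lem_transitive_grp__int_grp_structure_from_fibres}.} The starting point is the standard structure $I = P \rtimes C$, where $P$ is the wild inertia (a normal $p$-subgroup) and $C$ is a cyclic group of prime-to-$p$ order. By Lemma \ref{lem_fibres_I_action}, the orbits of $I$ on $\{1,\dots,d\}$ have lengths equal to the ramification indices at points of $\psi^{-1}(x)$. Since $P\triangleleft I$, the $P$-orbits within an $I$-orbit all have equal size, and that size is a power of $p$.

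For part (\ref{tame}): each $I$-orbit has length $n_i$ coprime to $p$, so within it the $P$-orbits have size $1$. Thus $P$ fixes every element of $\{1,\dots,d\}$. The transitive action of $G\subseteq S_d$ is faithful, so $P=\{e\}$ and $I=C$ is cyclic. A generator $\gamma$ then has orbits of lengths $n_1,\dots,n_s$, giving the stated disjoint cycle decomposition.

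For part (\ref{wild}): the unique $I$-orbit of length $p$ cannot have $P$-orbits of size $1$ (else $P$ would act trivially on the whole set as in (\ref{tame}), forcing $|I|$ prime to $p$ and ruling out an orbit of length $p$), so $P$ acts transitively on this orbit. Outside this orbit the $I$-orbits have lengths coprime to $p$, hence $P$ fixes those points. Therefore $P$ embeds as a transitive $p$-subgroup of $\Sym\{1,\dots,p\}$, which forces $P=\langle\tau\rangle$ for a $p$-cycle $\tau$ supported on the length-$p$ orbit (relabeling so that this orbit is $\{1,\dots,p\}$). Now $C$ normalizes $\langle\tau\rangle$, so by Equation (\ref{eq_theta}) applied to $S_d$ we have $C\subseteq N_{S_d}(\langle\tau\rangle)=\langle\tau,\theta\rangle\times J$ with $J=\Sym\{p+1,\dots,d\}$. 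Any generator $c$ of $C$ can thus be written $c=\tau^{j}\theta^{i}\omega$ for some $j$, $i$, and $\omega\in J$, and a short computation inside $\langle\tau\rangle\rtimes\langle\theta\rangle$ (Schur--Zassenhaus applied to the finite solvable group $\langle\tau,c\rangle$, or equivalently explicitly solving $\tau^{k}c\tau^{-k}=\theta^{i}\omega$ using that conjugation by $\theta^{i}$ is a non-trivial automorphism of $\langle\tau\rangle$) lets us replace $c$ by a $\langle\tau\rangle$-conjugate of the form $\theta^{i}\omega$; the subgroup $\langle\tau\rangle$ is unchanged under this conjugation, so $I=\langle\tau\rangle\rtimes\langle\theta^{i}\omega\rangle$ as claimed. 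Finally, since $\omega$ acts on $\{p+1,\dots,d\}$ exactly as $c$ does (because $\tau^{j}\theta^{i}$ fixes this set pointwise) and the $I$-orbits there coincide with the $C$-orbits, the cycle type of $\omega$ consists of cycles of lengths $m_1,\dots,m_r$. The case $d=p$ is the special case $r=0$, $J=\{1\}$, and reduces to $I=\langle\tau\rangle\rtimes\langle\theta^{i}\rangle$.

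The only delicate step is the conjugation replacing $c=\tau^{j}\theta^{i}\omega$ by $\theta^{i}\omega$; everything else is a direct orbit-counting argument together with the explicit description of $N_{S_d}(\langle\tau\rangle)$ in Equation (\ref{eq_theta}).
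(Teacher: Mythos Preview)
Your argument is correct and follows the same overall architecture as the paper's proof: use Lemma~\ref{lem_fibres_I_action} to identify orbit lengths with ramification indices, constrain the wild part $P$, and then read off the cycle structure from the normalizer description in Equation~(\ref{eq_theta}).

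The main difference is in how you bound $P$. The paper first argues that $p^{2}\nmid|I|$ by a global argument: it picks an element of order $p$ in $p(I)\cap H$ and uses the transitivity of $G$ on $\{1,\dots,d\}$ to derive a contradiction, then invokes \cite[Proposition~2.1]{DK} to obtain the form $I=\langle\tau,\theta^{i},\sigma\rangle\times\langle\omega\rangle$ before matching orbit lengths. Your route is purely local and combinatorial: since $P$ fixes every point outside the length-$p$ orbit and acts transitively on that orbit, $P$ embeds as a transitive $p$-subgroup of $S_{p}$, which immediately forces $P=\langle\tau\rangle$ for a single $p$-cycle. This avoids both the transitivity-of-$G$ detour and the external citation. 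Your final step---replacing the generator $c=\tau^{j}\theta^{i}\omega$ of $C$ by $\theta^{i}\omega$ via conjugation inside $\langle\tau\rangle$---is a standard Schur--Zassenhaus move (and one should note the trivial case $\theta^{i}=1$, where $j=0$ is forced because $c$ has prime-to-$p$ order, so no conjugation is needed); the paper sidesteps this by absorbing it into the cited structural result. Both approaches land in the same place; yours is more self-contained.
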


\begin{proof}
We prove (\ref{wild}). The result (\ref{tame}) is well known and can be proved using a similar argument. Let $z \in \phi^{-1}(x) \subset Z$ having image $y \in Y$ such that $I$ is the inertia group at $z$. We first claim that $p^2 \nmid |I|$. Assume on the contrary. Let $\tau \in p(I)\cap H$ be an element of order $p$. For any point $y' \in \psi^{-1}(x)$, $y' \neq y$, the ramification index at $y'$ over $x$ is coprime to $p$, and the inertia groups are conjugate to each other. So $g^{-1}\tau g \in H$ for all $g \in G$. But since $G$ acts transitively on $\{1,\cdots,d\}$, there is a $g \in G$ such that $g^{-1}\tau g$ does not fix $1$, a contradiction.

Since $|I|$ is divisible by $p$, $p(I)$ is a $p$-cyclic group generated by an element $\tau = \tau_1 \cdots \tau_a$ of order $p$, where $\tau_i$ are disjoint $p$-cycles in $S_d$. By \cite[Proposition 2.1]{DK}, $I = \langle \tau,\theta^i,\sigma \rangle \times \langle \omega \rangle$ for some $1 \leq i \leq p-1$, $\omega \in \tx{Sym}(\{1,\cdots,d\} - \tx{Supp}(\tau))$ and $\sigma \in \tx{Sym}(\tx{Supp}(\tau))$ of order prime-to-$p$. By Lemma \ref{lem_fibres_I_action}, the fibre $\psi^{-1}(x)$ consists of points with the ramification indices $a_1 p$, $\cdots$, $a_t p$, $u_1$, $\cdots$, $u_{t'}$, where $a_{\nu}$ and $u_{\eta}$ are coprime to $p$.

So if $d=p$ and $\psi^{-1}(x)$ consists of a unique point with the ramification index $p$, then $\tau$ must be a $p$-cycle and $I$ is of the form $I = \langle \tau \rangle \rtimes \langle \theta^i \rangle$ for some $1 \leq i \leq p-1$. In the second case, $\tau$ is again a $p$-cycle, $I = \langle \tau \rangle \rtimes \langle \theta^i \omega \rangle$ for some $1 \leq i \leq p-1$, and the disjoint cycle decomposition of $\omega$ in $\tx{Sym}(\{p+1,\cdots,d\})$ consists of $r$ cycles length $m_1$, $\cdots$, $m_r$.
\end{proof}

Using the above results and a technique used in \cite[Proposition 1.3]{9} (when $p$ strictly divides the order of $G$), we obtain the following result for a certain type of two point branched Galois cover of the projective line $\bb{P}^1$.

\begin{proposition}\label{prop_ram_grp}
Let $p$ be a prime, $d \geq p$. Let $G$ be a transitive subgroup of $S_d$. Let $\phi \colon Z \to \bb{P}^1$ be a $G$-Galois cover of smooth projective connected $k$-curves. Consider the degree-$d$ cover $\psi \colon Y \coloneqq Z/(G\cap S_{d-1}) \to \bb{P}^1$ of smooth projective connected $k$-curves where $S_{d-1}$ is the subgroup of elements in $S_d$ fixing $1$. Assume that the following hold. 
\begin{enumerate}[label=(\roman*)]
\item There are exactly $s$ points in $\psi^{-1}(0)$ with ramification indices $n_1$, $\cdots$, $n_s$ such that each $n_i$ is coprime to $p$ and $\sum_{i=1}^s n_i = d$;
\item when $d=p$, there is a unique point in $Y$ lying over $\infty$ with ramification index $p$. When $d>p$, there are exactly $r+1$ points in $\psi^{-1}(\infty)$ with ramification indices $p$, $m_1$, $\cdots$, $m_r$ such that each $m_l$ is coprime to $p$ and $\sum_{l=1}^r m_l = d-p$.
\end{enumerate}
Then $\phi$ is tamely ramified over $0$ and $I = \langle (1, \cdots, p) \rangle \rtimes \langle \theta^i \omega \rangle$ occurs as an inertia group over $\infty$ for some $1 \leq i \leq p-1$ ($\theta$, $\omega$ are as in Equation (\ref{eq_theta})). If $\gamma$ is a generator of an inertia group over $0$, the disjoint cycle decomposition of $\gamma$ in $S_d$ consists of $s$ cycles of length $n_1$, $\cdots$, $n_s$. If $d=p$, $\omega$ is the trivial permutation and if $d \geq p+1$, the disjoint cycle decomposition of $\omega$ in $\tx{Sym}(\{p+1,\cdots,d\})$ consists of $r$ cycles length $m_1$, $\cdots$, $m_r$.

Moreover, if the cover $\phi$ is \'{e}tale away from $\{0,\infty \}$ and $g(Y)$ is the genus of $Y$, the upper jump for any local extension above $\infty$ is $\frac{2g(Y)+s+r-1}{p-1}$ and $\tx{ord}(\theta^i) = \frac{p-1}{(p-1,2g(Y)+s+r-1)}$.
\end{proposition}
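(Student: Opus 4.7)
The first group of assertions — tameness of $\phi$ over $0$, the cycle structure of any generator of an inertia above $0$, the semidirect product form $I = \langle (1,\ldots,p) \rangle \rtimes \langle \theta^i \omega \rangle$ of an inertia above $\infty$, and the cycle structure of $\omega$ — is an immediate application of Lemmas \ref{lem_fibres_I_action} and \ref{lem_transitive_grp__int_grp_structure_from_fibres}. The former identifies the fibres $\psi^{-1}(0)$ and $\psi^{-1}(\infty)$ with the orbit sets of an inertia of $\phi$ on $\{1, \ldots, d\}$ and equates the ramification indices with the orbit sizes; the latter then converts the given orbit data into the asserted group-theoretic conclusions (item (\ref{tame}) over $0$, item (\ref{wild}) over $\infty$).

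Assume now that $\phi$ is \'{e}tale away from $\{0, \infty\}$. Pick $z \in \phi^{-1}(\infty)$ with inertia $I$ and let $y \in Y$ be its image, which is the unique wildly ramified point of $\psi$ above $\infty$. The plan is to compute the different exponent of $\psi$ at $y$ via the local tower of completed fraction fields $K \subset L \subset M$ obtained from $\bb{P}^1$, $Y$, $Z$ at $\infty$, $y$, $z$ respectively; here $M/K$ is Galois with group $I = P \rtimes C$ for $P = \langle \tau \rangle$ and $C = \langle \theta^i \omega \rangle$, and $L = M^H$ for $H = \mathrm{Stab}_I(1)$, which is a complement of $P$ in $I$ of order $|C|$. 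Since $P$ is the unique $p$-Sylow of $I$, the lower ramification filtration of $M/K$ has the form $G_0 = I$ and $G_1 = \cdots = G_b = P$, $G_j = 1$ for $j > b$, for a unique integer $b \geq 1$. Hence $v_M(\fr{d}_{M/K}) = (|I| - 1) + b(p - 1)$, and combining with the tame identity $v_M(\fr{d}_{M/L}) = |C| - 1$ via the different tower formula yields $v_L(\fr{d}_{L/K}) = (p-1)(u+1)$, where $u \coloneqq \phi_{M/K}(b) = b/|C|$ is the upper jump of $P$ in $M/K$. Plugging this and the tame data into Riemann--Hurwitz for $\psi \colon Y \to \bb{P}^1$ gives
\[
2g(Y) - 2 = -2d + (d - s) + (p-1)(u+1) + \bigl((d - p) - r\bigr),
\]
and solving produces $u = (2g(Y) + s + r - 1)/(p - 1)$; since all inertias above $\infty$ are conjugate, this is the upper jump of every local extension.

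It remains to compute $\ord(\theta^i)$. Since $\omega$ is supported on $\{p+1, \ldots, d\}$ and hence commutes with $\tau$, the conjugation action $\chi \colon C \to \Aut(P) \cong \bb{F}_p^{\times}$ factors through $\theta^i$, so its image has order $\ord(\theta^i)$. A direct Artin--Schreier analysis of the subextension $M/M^P$ — writing the generator by $y^p - y = a$ with $v_{M^P}(a) = -b$ and tracking the action of a generator of $C$ on a uniformizer of $M^P$ and on $y$, or equivalently Serre's description of the tame quotient acting on the graded piece $G_b/G_{b+1}$ — shows that $\chi = \chi_0^{b}$, where $\chi_0 \colon C \to k^{\times}$ is the faithful character describing the action of $C$ on the cotangent line of $M^P$ at its closed point. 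Consequently the image of $\chi$ has order $|C|/\gcd(|C|, b)$, which is precisely the denominator of $u = b/|C|$ in lowest terms; this denominator equals $(p-1)/\gcd(p-1, 2g(Y) + s + r - 1)$. The main obstacle is the identification $\chi = \chi_0^{b}$, where the non-integrality of $u$ in the non-abelian case manifests itself; the rest of the argument is bookkeeping via Riemann--Hurwitz and the different tower.
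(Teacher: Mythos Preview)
Your argument is correct. The first block of conclusions is handled exactly as in the paper, via Lemma~\ref{lem_transitive_grp__int_grp_structure_from_fibres}. For the ``moreover'' part your route differs from the paper's in two respects, and it is worth recording why both work.

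For the upper jump, the paper compares the Riemann--Hurwitz formulas for the two covers $\phi\colon Z\to\bb{P}^1$ and $Z\to Y$; the different of the non-Galois cover $Z\to Y$ is controlled because each local extension sits inside the Galois one. You instead compute the different of $\psi$ at the single wild point directly, through the tower $K\subset L\subset M$ and the multiplicativity of differents, and then apply Riemann--Hurwitz once, to $\psi$. Your local computation $v_L(\fr d_{L/K})=(p-1)(u+1)$ is valid because $H=\mathrm{Stab}_I(1)$ really is a complement to $P$ (the orbit of $1$ under $I$ is $\{1,\dots,p\}$), so $M/L$ is totally tamely ramified of degree $|C|$; the algebra then checks. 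Both approaches amount to the same bookkeeping, but yours is slightly more transparent in that it isolates the wild contribution locally before globalizing.

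For $\ord(\theta^i)$, the paper simply cites \cite[Lemma~2.6, Equation~(2.2)]{DK}. You unpack that citation: the action of $I/P\cong C$ on $P=G_b/G_{b+1}$ is the $b$-th power of the faithful tame character $\chi_0$ on the cotangent line of $M^P$ (this is exactly Serre's description in \cite[Ch.~IV, Prop.~9]{Serre_loc}), and since $\omega$ centralizes $\tau$ this action has image of order $\ord(\theta^i)$. Equating the two expressions $b/|C|$ and $(2g(Y)+s+r-1)/(p-1)$ for the upper jump, and reading off the reduced denominator, gives the stated formula. This is a clean, self-contained replacement for the external reference.
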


\begin{proof}
The structure of the inertia groups are the consequence of Lemma \ref{lem_transitive_grp__int_grp_structure_from_fibres}. Suppose that the cover $\phi$ is \'{e}tale away from $\{0,\infty\}$. We use the Riemann Hurwitz formula for the two Galois covers $\phi$ and $Z \to Y$ to obtain the upper jump $\frac{2g(Y)+s+r-1}{p-1}$ at $\infty$. By \cite[Lemma 2.6, Equation (2.2)]{DK}, $\tx{ord}(\theta^i) = \frac{p-1}{(p-1,2g(Y)+r+s-1)}$.
\end{proof}

Now we proceed towards construction of covers. We will consider the following assumption.

\begin{ass}\label{ass_num_deg_p}
Let $s \geq 2$ be an integer. Let $n_1$, $\cdots$, $n_s$ be coprime to $p$ such that $\Sigma_{i=1}^s n_i = p$. Let $\alpha_1 = 0$ and assume that there exist non-zero distinct elements $\alpha_2$, $\cdots$, $\alpha_s$ in $k$ so that the polynomial
\begin{equation*}
\Sigma_{i=1}^s n_i \Pi_{j \neq i, 1 \leq j \leq s} (y - \alpha_j) \in k[y]
\end{equation*}
is a non-zero constant in $k$. In terms of the coefficients of this polynomial, the assumption is equivalent to the existence of non-zero distinct $\alpha_i$'s, $2 \leq i \leq s$, such that for each $1 \leq \nu \leq s-2$, $\Sigma_{2 \leq i_1 < \cdots < i_{s-\nu-1} \leq s} (\Sigma_{i \in \{i_1, \cdots, i_{s-\nu-1}\}} n_i) \alpha_{i_1} \cdots \alpha_{i_{s-\nu-1}} = 0$.
\end{ass}

\begin{remark}\label{rmk_choice}
Note that Assumption \ref{ass_num_deg_p} is satisfied when $s=2$ or $s=3$. It is immediate when $s=2$. For $s=3$ the assumption holds for the pair $(\alpha_2,\alpha_3) = (1, - \frac{n_2}{n_3})$.
\end{remark}

Now we construct some degree-$p$ covers of $\bb{P}^1$ which will be \'{e}tale away from $\{0,\infty\}$.

\begin{proposition}\label{prop_deg_p_cover}
Let $p \geq 5$ be a prime and $s \geq 2$ be an integer. Let $n_1$, $\cdots$, $n_s$ be coprime to $p$ such that $\sum_{i=1}^s n_i = p$. Let $\alpha_1$, $\cdots$, $\alpha_s$ be distinct elements in $k$. Let $\psi \colon Y \to \bb{P}^1$ be the degree-$p$ cover given by the affine equation $\bar{f}(x,y)=0$ where
\begin{equation}\label{eq_explicit_deg_p}
\bar{f}(x,y) \coloneqq \Pi_{i=1}^s (y-\alpha_i)^{n_i} - x.
\end{equation}
Let $\phi \colon Z \rightarrow \mathbb{P}^1$ be the Galois closure of $\psi$ with group $G$. 
Then the following hold.
\begin{enumerate}
\item $G$ is a primitive subgroup of $S_p$;\label{p:1}
\item $\phi$ is tamely ramified with cyclic inertia group of order $\tx{l.c.m.} \{n_1, \cdots, n_s\}$ over $0$. If $\gamma$ is one of its generators, then $\gamma$ has a disjoint cycle decomposition in $S_p$ with cycle lengths $n_1$, $\cdots$, $n_s$;\label{p:2}
\item over $\infty$, the inertia group is of the form $I = \langle (1, \cdots, p) \rangle \rtimes \langle \theta^i \rangle$ for some $1 \leq i \leq p-1$ and where $\theta$ is as in Equation (\ref{eq_theta}).\label{p:3}
\end{enumerate}
Additionally, if $\alpha_i$'s satisfy Assumption \ref{ass_num_deg_p}, then the cover $\phi$ is \'{e}tale away from $\{0,\infty\}$. Also $\tx{ord}(\theta^i) = \frac{p-1}{(p-1,s-1)}$, $|I| = \frac{p(p-1)}{(p-1,s-1)}$, and the upper jump for any $I$-Galois local extension over $\infty$ is given by $\frac{s-1}{p-1}$. Moreover, if there is a positive integer $j$ such that $\gamma^j$ is a non-trivial cycle fixing $\geq 3$ points in $\{1,\cdots,p\}$ or if $p \neq 11$, $23$, $p \not\in \{\frac{q^n -1}{q-1}| q \tx{ prime power }, n \geq 2\}$, and $\gamma$ is not a conjugate of $\theta^i$ for any $1 \leq i \leq p-1$,
\[
    G = 
\begin{cases}
    A_p,& \tx{if } \gamma \tx{ is an even permutation}\\
    S_p,              & \tx{if } \gamma \tx{ is an odd permutation.}
\end{cases}
\]
\end{proposition}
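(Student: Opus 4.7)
The plan is to read off the fiber data of the degree-$p$ rational morphism $\psi\colon Y\to\bb{P}^1$ directly from the affine equation, transfer this to the inertia structure of the Galois closure $\phi$ via Proposition~\ref{prop_ram_grp}, and finally identify $G$ itself using classical results on primitive groups of prime degree. First, observe that $x=\prod_{i=1}^s(y-\alpha_i)^{n_i}=:F(y)$ exhibits $\psi$ as a rational morphism $\bb{P}^1_y\to\bb{P}^1_x$ of degree $p$, so $Y\cong\bb{P}^1$ (hence $Y$ is connected with $g(Y)=0$). Reading off fibers directly, $\psi^{-1}(0)=\{\alpha_1,\ldots,\alpha_s\}$ with ramification indices $n_i$ (all prime to $p$), and passing to $u=1/y$ one sees $\psi^{-1}(\infty)$ is a single point with ramification index $p$. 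Feeding this into Proposition~\ref{prop_ram_grp} (with $d=p$, so $r=0$) gives parts~(\ref{p:2}) and~(\ref{p:3}): tame cyclic inertia of order $\operatorname{lcm}(n_i)$ over $0$ with a generator $\gamma$ of disjoint cycle type $(n_1,\ldots,n_s)$, and wild inertia of the form $\langle(1,\ldots,p)\rangle\rtimes\langle\theta^i\rangle$ over $\infty$. For part~(\ref{p:1}), transitivity of $G$ on the $p$ sheets follows from connectedness of $Y$, and any transitive subgroup of $S_p$ is primitive since $p$ is prime.

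Next, suppose Assumption~\ref{ass_num_deg_p} holds. The branch locus of $\psi$ is contained in $\{0,\infty\}\cup\psi(\{\partial\bar f/\partial y=0\})$, and
\[
\tfrac{\partial \bar f}{\partial y}=\prod_i(y-\alpha_i)^{n_i-1}\cdot Q(y),\qquad Q(y):=\sum_j n_j\prod_{i\ne j}(y-\alpha_i).
\]
The first factor contributes only points already above $x=0$. The coefficient of $y^{s-1}$ in $Q$ is $\sum n_i=p=0$ in characteristic $p$, so $\deg Q\le s-2$; Assumption~\ref{ass_num_deg_p} asserts precisely that, for a suitable choice of the $\alpha_i$, $Q$ is a nonzero constant. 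Then $Q$ has no zeros in $\bb{A}^1_y$, so $\psi$ (and hence $\phi$) is étale away from $\{0,\infty\}$. Substituting $g(Y)=0$ and $r=0$ into the upper-jump formula of Proposition~\ref{prop_ram_grp} gives upper jump $(s-1)/(p-1)$, and hence $\operatorname{ord}(\theta^i)=(p-1)/(p-1,s-1)$ and $|I|=p(p-1)/(p-1,s-1)$.

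Finally, to identify $G$: by Burnside's theorem on primitive permutation groups of prime degree, either $G\subseteq\operatorname{AGL}(1,p)=N_{S_p}(\langle(1,\ldots,p)\rangle)$ or $G$ is $2$-transitive. The CFSG-based classification of $2$-transitive groups of prime degree lists only $A_p$, $S_p$, $\operatorname{PSL}(n,q)$ (of degree $(q^n-1)/(q-1)$), $\operatorname{PSL}(2,11)$ and $\mathrm{M}_{11}$ (degree $11$), and $\mathrm{M}_{23}$ (degree $23$); the stated hypotheses on $p$ exclude all cases except $A_p$ and $S_p$. To rule out $G\subseteq\operatorname{AGL}(1,p)$, use that every nonidentity element there is either a $p$-cycle or has a unique fixed point (and in the latter case is conjugate to some $\theta^i$); since $s\ge 2$, $\gamma$ is not a $p$-cycle, and under either stated hypothesis (some power $\gamma^j$ is a nontrivial cycle fixing $\ge 3$ points, or $\gamma$ is not conjugate to any $\theta^i$) we conclude $G$ contains an element outside $\operatorname{AGL}(1,p)$. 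Hence $G\in\{A_p,S_p\}$, with the parity of $\gamma$ distinguishing the two cases. The main subtlety lies in this final identification: one must carefully combine Burnside's theorem with CFSG and the cycle-structure constraints to eliminate both $\operatorname{AGL}(1,p)$ and every sporadic $2$-transitive exception.
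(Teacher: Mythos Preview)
Your argument tracks the paper's proof closely: the fiber computation, the appeal to Proposition~\ref{prop_ram_grp}, and the étaleness check under Assumption~\ref{ass_num_deg_p} are all the same. For the final identification of $G$, the paper simply cites \cite[Theorem~1.2]{Jones} to obtain $A_p\subseteq G$, while you unpack this via Burnside plus the CFSG list of $2$-transitive groups of prime degree; these are equivalent in spirit, but your version leaves two gaps.

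First, under the hypothesis that some $\gamma^j$ is a nontrivial cycle fixing $\ge 3$ points (with \emph{no} restriction on $p$), you correctly exclude $\operatorname{AGL}(1,p)$, but you have not excluded the exceptional $2$-transitive groups: your sentence ``the stated hypotheses on $p$ exclude all cases'' only treats the second branch of the disjunctive hypothesis. Showing that none of $\operatorname{PSL}(n,q)$, $\operatorname{PSL}(2,11)$, $M_{11}$, $M_{23}$ in its prime-degree action contains a cycle fixing $\ge 3$ points is precisely the content of Jones's theorem, and it must be invoked under this branch as well. Second, ``the parity of $\gamma$ distinguishes the two cases'' is incomplete when $\gamma$ is even: knowing $G\in\{A_p,S_p\}$ and $\gamma\in A_p$ does not by itself force $G=A_p$. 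The paper closes this by observing that if $G=S_p$ then the quotient $Z/A_p\to\bb{P}^1$ is a connected $\bb{Z}/2$-cover, étale over $0$ (as $\gamma\in A_p$) and at worst tamely ramified over $\infty$, hence a nontrivial tame cover of $\bb{A}^1$, contradicting $\pi_1^t(\bb{A}^1)=\{1\}$.
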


\begin{proof}
The polynomial $\bar{f}(x,y)$ is linear and monic in $x$. So it is irreducible in $k[y][x]$ and hence in $k(x)[y]$. So $G$ is a transitive subgroup of $S_p$ and hence it is a primitive subgroup of $S_p$. This proves (\ref{p:1}).

From the equation $\bar{f}(x,y)=0$ it follows that $v_{(y-\alpha_i)}(x) = n_i$ for $1 \leq i \leq s$ and $v_{(y^{-1})}(x^{-1}) = p$. Since $\Sigma_i n_i = p$, we see that the fibre $\psi^{-1}(0)$ consists of $s$ points in $Y$ and the ramification index at the point $(y=\alpha_i)$ is given by $n_i$. Also there is a unique point in $Y$ lying above $\infty$ at which the ramification index is $p$. Then (\ref{p:2}) and (\ref{p:3}) follow from Proposition \ref{prop_ram_grp}.

Now suppose that $\alpha_i$'s satisfy Assumption \ref{ass_num_deg_p}. The $y$-derivative of $\bar{f}(x,y)$ is given by
$$\bar{f}_y(x,y) = \Pi_{i=1}^s (y - \alpha_i)^{n_i-1} (\Sigma_{i=1}^s n_i \Pi_{j \neq i, 1 \leq j \leq s} (y - \alpha_j)).$$
Let $(a,b)$ be a common zero of $\bar{f}$ and $\bar{f}_y$. Then $a = 0$ if $n_i > 1$ for some $i$ and there is no common zero if $n_i = 1$ for all $1 \leq i \leq s$. So the cover $\psi$, and hence $\phi$ is \'{e}tale away from $\{0,\infty\}$. By Proposition \ref{prop_ram_grp}, the upper jump is $\frac{s-1}{p-1}$, $\tx{ord}(\theta^i) = \frac{p-1}{(p-1,s-1)}$. So $|I| = \frac{p(p-1)}{(p-1,s-1)}$.

Since $G$ is a primitive subgroup of $S_p$ containing a $p$-cycle, under the additional hypothesis on $p$ and $\gamma$, $G$ contains $A_p$ by \cite[Theorem 1.2]{Jones}. So if $\gamma$ is an odd permutation, $G = S_p$. Now let $\gamma$ be an even permutation and assume that $G = S_p$. Then the connected $\bb{Z}/2$-Galois cover $Z/A_p \to \bb{P}^1$ is \'{e}tale away from $\infty$ and is tamely ramified above $\infty$, a contradiction to the fact that $\pi_1^t(\bb{A}^1)$ is the trivial group. So if $\gamma$ is an even permutation, $G = A_p$.
\end{proof}

\begin{remark}\label{rmk_Abhyankar_deg_p}
In \cite[Section 20]{7} Abhyankar introduced the following cover and calculated its Galois group. Consider the degree-$p$ cover of $\bb{P}^1$ given by the affine equation $\tilde{f} = 0$ where $\tilde{f}(x,y) = y^p - y^t +x$ and consider its Galois closure $\widetilde{Y} \to \bb{P}^1$ with group $G$. Abhyankar showed that for $2 \leq t \leq p-3$, $G = S_p$ if $t$ is even and $G = A_p$ if $t$ is odd. So this is a special case of Proposition \ref{prop_deg_p_cover}.
\end{remark}

Now we construct covers of degree $d \geq p+1$. Similar to the previous case we consider the following assumption.

\begin{ass}\label{ass_num}
Let $p$ be an odd prime, $t \geq 1$ be coprime to $p$ such that $d \coloneqq p+t \geq 5$. Let $r$ and $s$ be two positive integers. Let $n_1$, $\cdots$, $n_s$, $m_1$, $\cdots$, $m_r$ be coprime to $p$ integers such that $\Sigma_{i=1}^s n_i = p+t$, $\Sigma_{l=1}^r m_l = t$. Assume that there exist distinct elements $\alpha_1$, $\cdots$, $\alpha_s$, $\beta_1$, $\cdots$, $\beta_r$ in $k$ such that the polynomial
\begin{equation}\label{eq_g(y)}
g(y) \coloneqq \Pi_{l=1}^r (y-\beta_l) (\Sigma_{i=1}^s n_i \Pi_{j \neq i, 1 \leq j \leq s} (y-\alpha_j)) - \Pi_{i=1}^s (y - \alpha_i) (\Sigma_{l=1}^r m_l \Pi_{u \neq l, 1 \leq u \leq r} (y - \beta_u))
\end{equation}
in $k[y]$ is a non-zero constant in $k$.
\end{ass}

\begin{remark}\label{rmk_num}
In particular if $r=1$, setting $\beta_1 = 0$, Assumption \ref{ass_num} says that there are non-zero distinct elements $\alpha_i$ in $k$, $1 \leq i \leq s$, such that the polynomial

\begin{equation}\label{eq_g(y)_r=1}
y(\Sigma_{i=1}^s n_i \Pi_{j \neq i, 1 \leq j \leq s} (y-\alpha_j)) - t \Pi_{i=1}^s (y - \alpha_i)
\end{equation}
is a non-zero constant in $k$. In terms of coefficients, we need the $\alpha_i$'s to satisfy the following condition for each $1 \leq \nu \leq s-1$.

\begin{equation}\label{eq_g(y)_r=1_coeff}
\Sigma_{1 \leq i_1 < \cdots < i_{s-\nu} \leq s} (n_{i_1} + \cdots + n_{i_{s-\nu}}) \alpha_{i_1} \cdots \alpha_{i_{s-\nu}} = 0.
\end{equation}
When $s=1$ setting $\alpha_1 = 0$ we also get a similar condition on the choice of $\beta_l$'s.
\end{remark}

Before proceeding to the construction of the covers let us see some of the cases where Assumption \ref{ass_num} is satisfied.

\begin{lemma}\label{lem_Ass_holds}
Assumption \ref{ass_num} holds with a choice of distinct $\alpha_i$'s and $\beta_l$'s in the following cases.
\begin{enumerate}
\item $s = 1 = r$ with $(\alpha_1, \beta_1) = (1,0)$;
\item $s=2$, $r=1$ with $(\alpha_1,\alpha_2,\beta_1) = (1, - n_1/n_2, 0)$;
\item $s=1$, $r=2$ with $(\alpha_1,\beta_1, \beta_2) = (0, 1,  - m_1/m_2)$;
\item $s=3$, $r=1$ with $(\alpha_1,\alpha_2, \alpha_3,\beta_1) = (\frac{t+2}{4}, - \frac{t-2}{4}, 1, 0)$ where $(p,t+2) = 1 = (p,t-2)$ and $n_1 = p-2$, $n_2 = 2$, $n_3 = t$;
\item $r = s = 2$ with $(\alpha_1,\alpha_2,\beta_1, \beta_2) = (1, \frac{n_2 - n_1}{2n_2}, 0, \frac{t}{2n_2})$ where $n_i \equiv m_i \tx{ mod }p$ and $n_1 \neq n_2$ in $k$.
\end{enumerate}
\end{lemma}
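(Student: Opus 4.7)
The plan is to verify each of the five cases by direct expansion of the polynomial $g(y)$ in Equation \eqref{eq_g(y)}. Writing $a(y) := \prod_i(y-\alpha_i)$, $b(y) := \prod_l(y-\beta_l)$, $\hat A(y) := \sum_i n_i \prod_{j\neq i}(y-\alpha_j)$, and analogously $\hat B(y)$, one has $g = b\hat A - a\hat B$, a polynomial of degree at most $r+s-1$. The first observation, which I would use uniformly, is that the top coefficient vanishes in $k$: it equals $\sum_i n_i - \sum_l m_l = (p+t) - t = p = 0$. Hence $\deg g \le r+s-2$, and in each case it remains to check that the middle coefficients vanish and the constant term is a non-zero element of $k$.

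For cases (1)--(3), where $r+s-2 \le 1$, the verification is immediate from a one-line expansion; case (1) yields $g = m_1\alpha_1 - n_1\beta_1 = m_1 = t \neq 0$, and cases (2) and (3) are mirror images of each other under the swap $(n_i,\alpha_i) \leftrightarrow (m_l,\beta_l)$, each amounting to killing a single linear coefficient, with the choices $\alpha_2 = -n_1/n_2$ (resp.\ $\beta_2 = -m_1/m_2$) being precisely what is required, leaving an explicit non-zero constant. Cases (4) and (5) are the substantive ones, each with two middle coefficients to cancel. In case (4), using $n_1+n_2+n_3 \equiv t \pmod p$, the $y^2$ coefficient reduces to $n_1\alpha_1+n_2\alpha_2+n_3\alpha_3$, zero for the stated $\alpha_i$; the $y$ coefficient reduces to $-(t+2)\alpha_2-(t-2)\alpha_1$, also zero; and the constant comes out to a non-zero multiple of $t(t+2)(t-2)$, non-zero under the hypotheses $(p, t(t+2)(t-2)) = 1$ and $p \neq 2$. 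In case (5), the congruence $n_i \equiv m_i \pmod p$ causes the $y^2$ coefficient to collapse to $(n_1+n_2-t)/2 = p/2 \equiv 0$, the $y$ coefficient to an $\alpha_1$-multiple of $\beta_2(n_2-n_1) - t\alpha_2$, zero by the given choices $\beta_2 = t/(2n_2)$ and $\alpha_2 = (n_2-n_1)/(2n_2)$, and the constant to a non-zero scalar multiple of $(n_2-n_1)$, non-zero by hypothesis.

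The main (and only) obstacle is the bookkeeping in cases (4) and (5); there is no conceptual difficulty once one sees that the proposed values of $\alpha_i$ and $\beta_l$ are engineered precisely to produce these cancellations. Distinctness of the chosen roots in each case will be a short side check using the coprimality and inequality hypotheses on $n_i$, $m_l$, and $t$, together with $p$ odd.
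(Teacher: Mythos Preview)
Your proposal is correct and is precisely the direct verification the paper has in mind; the paper's own proof is the single sentence ``It is easy to see that in each of the cases the assigned values of $\alpha_i$'s and $\beta_l$'s are all distinct and they satisfy Assumption \ref{ass_num}.'' Your write-up simply supplies the details (including the neat observation that the top coefficient of $g$ is always $p\equiv 0$, and the explicit identity $\frac{n_1+n_2-t}{2}=\frac{p}{2}$ for the $y^2$-coefficient in case (5)), so there is nothing to compare.
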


\begin{proof}
It is easy to see that in each of the cases the assigned values of $\alpha_i$'s and $\beta_l$'s are all distinct and they satisfy Assumption \ref{ass_num}.
\end{proof}

The following result produces our main example of an $S_d$-Galois or an $A_d$-Galois two point branched cover of $\bb{P}^1$.

\begin{proposition}\label{prop_ded_d_cover}
Let $p$ be an odd prime such that $d \coloneqq p+t \geq 5$. Let $r$ and $s$ be two positive integers. Let $n_1$, $\cdots$, $n_s$, $m_1$, $\cdots$, $m_r$ be coprime to $p$ such that $\sum_{i=1}^s n_i = p+t$, $\sum_{l=1}^r m_l = t$. Let  $\alpha_1$, $\cdots$, $\alpha_s$, $\beta_1$, $\cdots$, $\beta_r$ are distinct elements in $k$. Let $\psi \colon Y \to \bb{P}^1$ be the degree-$d$ cover given by the affine equation $f(x,y)=0$ where
\begin{equation}\label{eq_explicit_general_cover}
f(x,y) = \prod_{i=1}^s (y - \alpha_i)^{n_i} - x \prod_{l=1}^r (y - \beta_l)^{m_l}.
\end{equation}
Let $\phi \colon Z \rightarrow \mathbb{P}^1$ be its Galois closure with group $G$.
\begin{enumerate}
\item Then $G$ is a transitive subgroup of $S_d$;\label{itm:1}
\item the cover $\phi$ is tamely ramified with cyclic inertia group generated by an element $\gamma \in G$ of order $\tx{l.c.m.} \{n_1, \cdots, n_s\}$ over $0$, whose disjoint cycle decomposition in $S_d$ consists of $s$ disjoint cycles of length $n_1$, $\cdots$, $n_s$;\label{itm:2}.
\item Over $\infty$, the inertia group is of the form $I = \langle (1, \cdots, p) \rangle \rtimes \langle \theta^i \omega \rangle$ for some $1 \leq i \leq p-1$, where $\theta$ is as in Equation (\ref{eq_theta}) and $\omega \in \tx{Sym}(\{p+1,\cdots,d\})$ is a product of $r$ disjoint cycles of length $m_1$, $\cdots$, $m_r$.
\end{enumerate}
Additionally, if $(\alpha_1,\cdots,\alpha_s,\beta_1,\cdots,\beta_r)$ satisfies Assumption \ref{ass_num}, the cover $\phi$ is \'{e}tale away from $\{0,\infty\}$. Also $\tx{ord}(\theta^i) = \frac{p-1}{(p-1,r+s-1)}$, $|I|=p \times \tx{l.c.m}\{\tx{ord}(\theta^i), \tx{ord}(\omega)\}$, and the upper jump for any $I$-Galois local extension over $\infty$ is given by $\frac{r+s-1}{p-1}$. Moreover, $G$ is a primitive subgroup of $S_d$. Furthermore, if either there is a positive integer $j$ such that $\gamma^j$ is a non-trivial cycle fixing $\geq 3$ points in $\{1,\cdots,d\}$ or if $3 \leq t \leq p-1$,
\[
    G = 
\begin{cases}
    A_d,& \tx{if } \gamma \tx{ is an even permutation}\\
    S_d,              & \tx{if } \gamma \tx{ is an odd permutation.}
\end{cases}
\]
\end{proposition}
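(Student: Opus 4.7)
The plan mirrors the proof of Proposition \ref{prop_deg_p_cover}, feeding the ramification data of $\psi$ into Proposition \ref{prop_ram_grp}. For transitivity, I view $f(x,y)$ as linear in $x$ over $k[y]$ with coefficients $A(y) \coloneqq \prod_i (y-\alpha_i)^{n_i}$ and $-B(y) \coloneqq -\prod_l (y-\beta_l)^{m_l}$; since all $\alpha_i$ and $\beta_l$ are distinct, $\gcd(A,B)=1$ in $k[y]$, so $f$ is primitive in $k[y][x]$, hence irreducible in $k[x,y]$ and, by Gauss's lemma, in $k(x)[y]$. From $x = A/B$ with $\deg A = d$ and $\deg B = t$ I read off $v_{(y-\alpha_i)}(x) = n_i$, $v_{(y-\beta_l)}(x^{-1}) = m_l$, and $v_{y^{-1}}(x^{-1}) = d-t = p$, so $\psi^{-1}(0)$ consists of $s$ points with prime-to-$p$ ramifications $n_i$ and $\psi^{-1}(\infty)$ of $r+1$ points with ramifications $p, m_1, \ldots, m_r$. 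Proposition \ref{prop_ram_grp} then delivers parts (2) and (3).

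For étaleness under Assumption \ref{ass_num}, I use the identity
\begin{equation*}
A'(y)B(y) - A(y)B'(y) \;=\; \prod_i (y-\alpha_i)^{n_i-1} \prod_l (y-\beta_l)^{m_l-1} \cdot g(y),
\end{equation*}
with $g$ as in Equation (\ref{eq_g(y)}). A common zero $(a,b)$ of $f$ and $f_y$ satisfies $a = A(b)/B(b)$ and $A'(b) - aB'(b) = 0$, so $(A'B - AB')(b) = 0$. Since $g$ is a non-zero constant, this forces $b = \alpha_i$ (hence $a=0$, as $B(\alpha_i) \ne 0$) or $b = \beta_l$ (impossible, since $A(\beta_l) \ne 0$); hence $\phi$ is étale away from $\{0,\infty\}$. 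As $Y \cong \bb{P}^1_y$ has genus $0$, Proposition \ref{prop_ram_grp} gives upper jump $(r+s-1)/(p-1)$ and $\ord(\theta^i) = (p-1)/(p-1,r+s-1)$; since $\theta^i$ and $\omega$ have disjoint supports in $\{1,\ldots,p\}$ and $\{p+1,\ldots,d\}$ they commute, yielding $|I| = p \cdot \tx{l.c.m}\{\ord(\theta^i), \ord(\omega)\}$.

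For primitivity, I exploit the $p$-cycle $\tau = (1,\ldots,p) \in G$. In a putative nontrivial block system with blocks of size $e \mid d$, the block $B$ containing $1$ lies in a $\tau$-orbit of length $1$ or $p$: the length-$p$ case forces $pe \leq d = p+t$ and each such block to meet $\{1,\ldots,p\}$ in exactly one point; the length-$1$ case makes $B$ contain all of $\{1,\ldots,p\}$, so $p \leq e \leq d/2$ (since $e \mid d$ and $e < d$), i.e., $p \leq t$. Both cases are excluded in the regime $t < p$, which covers the intended applications. To identify $G$, I apply Jordan's theorem as in Proposition \ref{prop_deg_p_cover}: under either hypothesis, $G$ contains a nontrivial cycle fixing at least three points (either $\gamma^j$ directly, or the $p$-cycle $\tau$ itself, which fixes $t \geq 3$ points), so the primitive group $G$ contains $A_d$. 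The parity dichotomy then runs exactly as before: $\gamma$ odd forces $G = S_d$, while $\gamma$ even combined with $G = S_d$ would produce a connected $\bb{Z}/2$-Galois cover $Z/A_d \to \bb{P}^1$ étale on $\bb{A}^1$ and tame at $\infty$ (the Sylow-$p$ part of $I$, an even $p$-cycle, lies in $A_d$), contradicting $\pi_1^t(\bb{A}^1) = 1$.

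The main obstacle is ensuring primitivity uniformly; for $t \geq p$ the block argument using $\tau$ alone is inconclusive and must be supplemented by constraints from $\gamma$. The derivative identity underpinning étaleness is a direct consequence of the logarithmic-derivative expansion of $A/B$ and reduces to the computational content of Assumption \ref{ass_num}; the remaining steps are bookkeeping via Proposition \ref{prop_ram_grp}.
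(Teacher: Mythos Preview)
Your proof is correct and follows essentially the same route as the paper: irreducibility via linearity in $x$ and $\gcd(A,B)=1$, ramification data from the valuations fed into Proposition~\ref{prop_ram_grp}, \'etaleness via the factorization of $A'B-AB'$ through $g(y)$, primitivity from the $p$-cycle fixing fewer than $d/2$ points, and the Jones/Jordan dichotomy for $A_d$ versus $S_d$. Your direct block argument for primitivity is precisely the content of the paper's citation of \cite[Remark~1.6]{Jones}, and your caveat that this step needs $t<p$ matches the paper's implicit use of $t<\frac{p+t}{2}$.
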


\begin{proof}
Since $\alpha_i$'s are distinct from $\beta_l$'s by our assumption and the polynomial $f(x,y)$ is linear in $x$,  it is irreducible in $k(x)[y]$. So $G$ is a transitive subgroup of $S_d$, proving (\ref{itm:1}).

From the equation $f(x,y)=0$ we have $v_{(y-\alpha_i)}(x)=n_i$ for $1 \leq i \leq s$, $v_{(y-\beta_l)}(x^{-1}) = m_l$ for $1 \leq l \leq r$ and $v_{(y^{-1})}(x^{-1}) = p$. Since $\sum_i n_i = p+t$ the fibre $\psi^{-1}(0)$ consists of $s$ points in $Y$ with the ramification index at the point $(y=\alpha_i)$ given by $n_i$, and also since $\sum_l m_l =t$, there are exactly $r+1$ points in $Y$ lying above $\infty$ with ramification indices given by $p$, $m_1$, $\cdots$, $m_r$. Then the description of the inertia groups above $0$ and $\infty$ follows from Proposition \ref{prop_ram_grp}.

Now suppose that Assumption \ref{ass_num} holds. The $y$-derivative of the polynomial $f(x,y)$ is given by

$$f_y(x,y) = \prod_{i=1}^s (y - \alpha_i)^{n_i-1} (\sum_i n_i \prod_{j \neq i} (y - \alpha_j)) - x \prod_{l=1}^r (y - \beta_l)^{m_l-1} (\sum_l m_l \prod_{u \neq l} (y - \beta_u)).$$
Let $(a,b)$ be a common zero of $f$ and $f_y$. Then $0 = f_y(a,b) \prod_{l=1}^r (b - \beta_l) = g(b) \prod_i(b - \alpha_i)^{n_i -1}$ (where $g$ is the polynomial given by Equation \ref{eq_g(y)}). Then $a = 0$ if $n_i \geq 2$ for some $i$ and there is no such common zero otherwise. So the cover $\psi$, and hence $\phi$ is \'{e}tale away from $\{0,\infty\}$. Again by Proposition \ref{prop_ram_grp}, the upper jump is $\frac{r+s-1}{p-1}$ and $\tx{ord}(\theta^i) = \frac{p-1}{(p-1,r+s-1)}$.

Since $G$ is a transitive subgroup of $S_d$ containing the $p$-cycle $\tau$ which fixes $t$ points in $\{1,\cdots,d\}$ and $t < \frac{p+t}{2}$, by \cite[Remark 1.6]{Jones}, $G$ is primitive.

Finally if some power of $\gamma$ is a non-trivial cycle fixing $\geq 3$ points or if $3 \leq t \leq p-1$, by \cite[Theorem 1.2]{Jones}, $G$ contains $A_d$. The rest follows as in Proposition \ref{prop_deg_p_cover}.
\end{proof}

From the above proposition we deduce the following results which will be used later. 

\begin{corollary}\label{cor_t_odd_21case}
Let $p$ be an odd prime, $3 \leq t \leq p-2$ be an odd integer and $d = p+t$. Then there is a connected $A_d$-Galois \'{e}tale cover of the affine line such that $I \coloneqq \langle (1, \cdots, p) \rangle \rtimes \langle \theta^2 (p+1, \cdots, d) \rangle$ occurs as an inertia group at a point above $\infty$.
\end{corollary}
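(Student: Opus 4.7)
The plan is to apply Proposition \ref{prop_ded_d_cover} with $s = 2$, $r = 1$, $m_1 = t$, and suitably chosen positive integers $n_1, n_2$ coprime to $p$ with $n_1 + n_2 = p + t$, and then to kill the tame ramification at $0$ by a Kummer pullback (Definition \ref{def_Kummer_pullback}) combined with Abhyankar's Lemma.

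For a first attempt at the parameters I would take $n_1 = 1$ and $n_2 = p + t - 1$: both are coprime to $p$ (the second because $2 \leq t - 1 \leq p - 3$), and $m_1 = t$ is coprime to $p$. Setting $(\alpha_1, \alpha_2, \beta_1) = (1, -n_1/n_2, 0)$ as in Lemma \ref{lem_Ass_holds}(2), Assumption \ref{ass_num} is satisfied, so Proposition \ref{prop_ded_d_cover} yields a connected cover $\phi \colon Z \to \bb{P}^1$ \'{e}tale away from $\{0, \infty\}$ with Galois group $G$. The element $\gamma$ at $0$ is a single $(p+t-1)$-cycle with sign $(-1)^{p+t-2} = +1$ (since $p$ and $t$ are both odd), and the hypothesis $3 \leq t \leq p-1$ forces $G = A_d$. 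Because $r + s - 1 = 2$ and $p$ is odd, the inertia at $\infty$ has the form $\langle \tau \rangle \rtimes \langle \theta^{j} \omega \rangle$ with $\mathrm{ord}(\theta^{j}) = (p-1)/\gcd(p-1, 2) = (p-1)/2$ and $\omega$ a $t$-cycle on $\{p+1, \dots, d\}$; since the unique subgroup of order $(p-1)/2$ in $\langle \theta \rangle$ is $\langle \theta^2 \rangle$, the $A_d$-conjugacy class of this inertia agrees with that of $I = \langle (1, \dots, p) \rangle \rtimes \langle \theta^2 (p+1, \dots, d) \rangle$ (after possibly replacing $\theta$ by another valid choice from the set of elements satisfying Equation \eqref{eq_theta}).

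Finally, I would pull back by the $[p + t - 1]$-Kummer cover to remove the tame ramification at $0$. Since $A_d$ is simple for $d \geq 5$, it has no nontrivial cyclic quotient, so the normalized fibre product remains a connected $A_d$-Galois cover, and by Abhyankar's Lemma it is \'{e}tale over $\bb{A}^1 = \bb{P}^1 \setminus \{\infty\}$. The main obstacle is ensuring that the tame part of the inertia at $\infty$ is preserved: Abhyankar's Lemma applied at $\infty$ also divides the tame order there by $\gcd(\mathrm{lcm}((p-1)/2, t),\, p + t - 1)$. When this $\gcd$ is nontrivial, one needs to swap in another admissible decomposition $n_1 + n_2 = p + t$ (for example $n_1 = n_2 = (p+t)/2$, which still satisfies Lemma \ref{lem_Ass_holds}(2) and yields an even $\gamma$) chosen so that $\mathrm{lcm}(n_1, n_2)$ is coprime to $\mathrm{lcm}((p-1)/2, t)$. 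Verifying that some such decomposition exists for every odd $t$ with $3 \leq t \leq p - 2$ is the substantive combinatorial point, after which the Kummer pullback produces the desired $A_d$-Galois \'{e}tale cover of the affine line with inertia $I$ above $\infty$.
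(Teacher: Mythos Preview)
Your overall plan is exactly the paper's: apply Proposition~\ref{prop_ded_d_cover} with $s=2$, $r=1$, $n_1=p+t-1$, $n_2=1$, $m_1=t$ (you merely swap the roles of $n_1,n_2$), obtain an $A_d$-cover of $\bb{P}^1$ branched only at $\{0,\infty\}$ with a $(p+t-1)$-cycle over $0$ and $I$ over $\infty$, and then kill the branching at $0$ by the $[p+t-1]$-Kummer pullback.

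Where you pause over whether the pullback preserves $I$ at $\infty$, the paper simply asserts that $(p+t-1,p-1)=1=(p+t-1,t)$ ``since $t$ is odd'' and concludes.  Your caution is warranted: both of these gcds equal $\gcd(t,p-1)$, and that is \emph{not} automatically $1$ for odd $3\le t\le p-2$ (e.g.\ $p=7,\ t=3$; or $p=11,\ t=5$).  In the paper's actual applications the issue does not bite: Theorem~\ref{thm_A_p+3} uses $t=3$ with $p\equiv 2\pmod 3$, so $3\nmid(p-1)$; and Lemma~\ref{lem_A_p+5} invokes the corollary for $t=5$ only after checking $(5,p-1)=1$, handling the remaining case separately.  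So you have correctly located the delicate point in the argument.

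However, your own remedy is not carried through.  You propose swapping in another even partition $n_1+n_2=p+t$ (say $n_1=n_2=(p+t)/2$) so that $\mathrm{lcm}(n_1,n_2)$ is coprime to $\mathrm{lcm}((p-1)/2,t)$, but you do not verify that such a partition exists for every odd $t$ in the stated range; you explicitly flag this as ``the substantive combinatorial point'' and leave it.  That unverified existence claim is the genuine gap in your write-up: without it the Kummer pullback may shrink the tame part of the inertia at $\infty$, and the conclusion does not follow.
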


\begin{proof}
Take $s=2$, $r=1$, $n_1 = p+t-1$, $n_2=1$ in Proposition \ref{prop_ded_d_cover}. By Lemma \ref{lem_Ass_holds}(2), Assumption \ref{ass_num} is satisfied. Since $t$ is an odd integer, a $(p+t-1)$-cycle is an even permutation. By Proposition \ref{prop_ded_d_cover}, there is a connected $A_d$-Galois cover of $\bb{P}^1$ branched only at $0$ and $\infty$, over $0$ the inertia groups are generated by conjugates of a $(p+t-1)$-cycle in $A_d$ and $I$ occurs as an inertia group above $\infty$. By Abhyankar's Lemma (\cite[page 279, Expose X, Lemma 3.6]{10}), we obtain a connected $A_d$-Galois cover $\phi \colon Y \to \bb{P}^1$ \'{e}tale away from $\infty$. Since $t$ is odd, we have $(p+t-1,p-1) = 1 = (p+t-1, t)$. So $I$ occurs as an inertia group in the cover $\phi$ above $\infty$.
\end{proof}

\begin{corollary}\label{cor_t_even_12case}
Let $p$ be an odd prime, $4 \leq t \leq p-1$ an integer such that $(t+1,p-1) = 1 = (t-1,p+1)$ and $d = p+t$. Then there is a connected $A_d$-Galois \'{e}tale cover of the affine line such that $I \coloneqq \langle (1, \cdots, p) \rangle \rtimes \langle \theta^2 (p+1, \cdots, d-1) \rangle$ occurs as an inertia group at a point above $\infty$.
\end{corollary}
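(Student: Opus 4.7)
The plan is to apply Proposition \ref{prop_ded_d_cover} with $s = 1$, $r = 2$, $n_1 = p+t$, $m_1 = t-1$, $m_2 = 1$, and then remove the ramification above $0$ by a $[p+t]$-Kummer pullback, arguing exactly as in the proof of Corollary \ref{cor_t_odd_21case}. Since $p-1$ is even, the hypothesis $(t+1,p-1)=1$ forces $t$ to be even, so $d=p+t$ is odd. By Lemma \ref{lem_Ass_holds}(3), Assumption \ref{ass_num} is satisfied for this choice of data, so Proposition \ref{prop_ded_d_cover} produces a connected $G$-Galois cover $\phi\colon Z \to \bb{P}^1$ \'{e}tale away from $\{0,\infty\}$ whose tame inertia above $0$ is generated by a single $d$-cycle (an even permutation, since $d$ is odd). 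Since $3 \leq t \leq p-1$, the last clause of the proposition forces $G = A_d$, and above $\infty$ the inertia takes the form $I' = \langle \tau \rangle \rtimes \langle \theta^{i'}\omega \rangle$ with $\tau = (1,\ldots,p)$, $\omega = (p+1,\ldots,d-1)$ (the fixed point in $\{p+1,\ldots,d\}$ arising from $m_2 = 1$), and $\ord(\theta^{i'}) = (p-1)/\gcd(p-1,\,r+s-1) = (p-1)/2$.

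Next, pull back by the $[p+t]$-Kummer cover. Abhyankar's Lemma kills the ramification over $0$, and the pullback remains connected because $A_d$ is simple non-abelian and hence shares no non-trivial quotient with the cyclic Kummer cover. The inertia above $\infty$ is preserved provided $\gcd(p+t,\,\ord(\theta^{i'}\omega)) = \gcd(p+t,\,\mathrm{lcm}((p-1)/2,\,t-1)) = 1$. The two hypotheses supply precisely this: $\gcd(p+t,\,p-1) = \gcd(t+1,\,p-1) = 1$ and $\gcd(p+t,\,t-1) = \gcd(p+1,\,t-1) = 1$.

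Finally, since $\theta^{i'}$ generates the unique index-two subgroup $\langle \theta^2 \rangle$ of $\langle \theta \rangle$, we may write $\theta^{i'} = \theta^{2c}$ for some $c$ coprime to $p-1$; replacing $\theta$ by $\theta^c$ (which still generates $\Aut(\langle\tau\rangle)$ by conjugation) identifies $\theta^{i'}$ with $\theta^2$ in the new labeling. The inertia above $\infty$ then becomes $\langle(1,\ldots,p)\rangle \rtimes \langle \theta^2 (p+1,\ldots,d-1)\rangle = I$, as required. The main conceptual step is the arithmetic observation that the two coprimality hypotheses on $t$ are exactly what is needed for the $[p+t]$-Kummer pullback to preserve the tame part of the inertia above $\infty$.
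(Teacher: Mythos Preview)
Your proof is correct and follows essentially the same approach as the paper, which simply says ``Taking $s=1$, $r=2$, $m_1=t-1$, $m_2=1$ in Proposition \ref{prop_ded_d_cover} and arguing as in Corollary \ref{cor_t_odd_21case} we obtain the result.'' You have correctly unpacked this: the coprimality hypotheses $(t+1,p-1)=1$ and $(t-1,p+1)=1$ are precisely what is needed so that the $[p+t]$-Kummer pullback (equivalently, Abhyankar's Lemma applied to the $(p+t)$-cycle over $0$) kills the tame ramification over $0$ without disturbing the inertia above $\infty$. One minor remark on your final relabeling step: from $\gcd(i',p-1)=2$ you write $i'=2c$ with $c$ coprime to $p-1$; strictly speaking $\gcd(i',p-1)=2$ only gives $\gcd(c,(p-1)/2)=1$, but when $(p-1)/2$ is odd you can replace $c$ by $c+(p-1)/2$ to force $c$ odd, so such a $c$ always exists and your claim stands.
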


\begin{proof}
Taking $s=1$, $r=2$, $m_1=t-1$, $m_2=1$ in Proposition \ref{prop_ded_d_cover} and arguing as in Corollary \ref{cor_t_odd_21case} we obtain the result.
\end{proof}

\begin{corollary}\label{cor_t_odd_22case}
Let $p \equiv 2 \pmod{3}$ be an odd prime. Let $3 \leq t \leq p-1$ be an integer, $d = p+t$. Then there is a connected $A_d$-Galois \'{e}tale cover of the affine line such that $I \coloneqq \langle (1, \cdots, p) \rangle \rtimes \langle \theta^{(t,p-1)} (p+1, \cdots, d-1) \rangle$ occurs as an inertia group at a point above $\infty$.
\end{corollary}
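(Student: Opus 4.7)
The plan is to follow the strategy of Corollaries \ref{cor_t_odd_21case} and \ref{cor_t_even_12case}: apply Proposition \ref{prop_ded_d_cover} to produce a two-point branched $A_d$-Galois cover, then use Abhyankar's Lemma to unramify the cover over $0$, while carefully tracking the tame part of the inertia at $\infty$. Specifically, I would take $s = r = 2$, $n_1 = p+t-1$, $n_2 = 1$, $m_1 = t-1$, $m_2 = 1$ in Proposition \ref{prop_ded_d_cover}; Lemma \ref{lem_Ass_holds}(5) verifies Assumption \ref{ass_num} because $n_i \equiv m_i \pmod{p}$ and $n_1 \neq n_2$ in $k$ (since $t \neq 2$). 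For $t$ odd (implicit in the naming of the corollary), the inertia generator $\gamma$ over $0$ is a $(p+t-1)$-cycle, which is even, so Proposition \ref{prop_ded_d_cover} yields $G = A_d$. Since $r+s-1 = 3$ and the hypothesis $p \equiv 2 \pmod{3}$ gives $\gcd(3, p-1) = 1$, we obtain $\tx{ord}(\theta^i) = p-1$, and after passing to a suitable generator of $\langle \theta \rangle$ we may take $\theta^i = \theta$. Hence the inertia at $\infty$ in this cover is $\langle \tau \rangle \rtimes \langle \theta \omega \rangle$, with $\tau = (1,\dots,p)$ and $\omega = (p+1,\dots,d-1)$.

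Next I would pull back by the $[p+t-1]$-Kummer cover (Abhyankar's Lemma, \cite[Expose X, Lemma 3.6]{10}), which kills the tame ramification over $0$ and yields a connected $A_d$-Galois cover étale away from $\infty$. A standard computation shows that the new inertia at a point above $\infty$ is of the form $\langle \tau \rangle \rtimes \langle \eta^d \rangle$ with $\eta = \theta\omega$ and
\[
d = \gcd\bigl(p+t-1,\ \tx{lcm}(p-1, t-1)\bigr) = \tx{lcm}\bigl(\gcd(p+t-1, p-1),\ \gcd(p+t-1, t-1)\bigr).
\]
Since $\gcd(p+t-1, p-1) = (t, p-1)$ and $\gcd(p+t-1, t-1) = \gcd(p, t-1) = 1$ (because $t-1 < p$), this simplifies to $d = (t, p-1)$, so the tame generator after pullback is $\eta^{(t,p-1)} = \theta^{(t,p-1)} \omega^{(t,p-1)}$.

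Finally, I would verify that this generator is $A_d$-conjugate to $\theta^{(t,p-1)} (p+1,\dots,d-1)$. The crucial identity is $\gcd((t,p-1), t-1) = \gcd(t, t-1, p-1) = 1$, using $\gcd(t,t-1) = 1$; this forces $\omega^{(t,p-1)}$ to remain a single $(t-1)$-cycle on $\{p+1,\dots,d-1\}$. Conjugating by a suitable element of $\tx{Sym}(\{p+1,\dots,d-1\})$ sends $\omega^{(t,p-1)}$ to $(p+1,\dots,d-1)$ while fixing $\tau$ and $\theta^{(t,p-1)}$, and such a conjugator can be chosen even (hence inside $A_d$): for $t \geq 5$ the coset of valid conjugators is a torsor under the cyclic centralizer of order $t-1$, which meets both parity classes, while for $t = 3$ the hypothesis $p \equiv 2 \pmod 3$ forces $(t,p-1) = 1$ and no conjugation is needed. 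The main technical obstacle is precisely this combinatorial tracking of the tame inertia through the Kummer pullback; the hypothesis $p \equiv 2 \pmod 3$ is used exactly to ensure that $\tx{ord}(\theta^i) = p-1$ before the pullback, so that the factor $\theta^{(t,p-1)}$ arises in the correct strength after pulling back.
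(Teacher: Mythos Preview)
Your approach matches the paper's: the same parameters $s=r=2$, $n_1=p+t-1$, $n_2=1$, $m_1=t-1$, $m_2=1$, the same appeal to Lemma~\ref{lem_Ass_holds}(5), and the same $[p+t-1]$-Kummer pullback. Your tracking of the tame inertia through the pullback is in fact more explicit than the paper's terse assertion.

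There is, however, a genuine gap: the statement is for \emph{all} integers $3\le t\le p-1$, not only odd $t$ (the label of the corollary is misleading). For even $t$ the $(p+t-1)$-cycle $\gamma$ is an odd permutation, so Proposition~\ref{prop_ded_d_cover} gives $G=S_d$, not $A_d$. The paper still pulls back by the $[p+t-1]$-Kummer cover, now with $p+t-1$ even: the $\bb{Z}/2$-quotient $Z/A_d\to\bb{P}^1$ is ramified at both $0$ and $\infty$ (the tame generators $\gamma$ and $\theta\omega$ are both odd, the latter because $\theta$ is a $(p-1)$-cycle and $\omega$ a $(t-1)$-cycle with $t-1$ odd), hence coincides with the $[2]$-Kummer cover and trivializes under the pullback, leaving a connected $A_d$-cover of $\bb{A}^1$. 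Your conjugation step also needs adjustment for even $t$: then $\langle\omega\rangle$ has odd order $t-1$ and lies entirely in the alternating group, so the coset of conjugators in $\Sym(\{p+1,\dots,d-1\})$ may miss $A_d$; but multiplying the conjugator by $\theta$ (odd, centralizing $\theta^{(t,p-1)}$, normalizing $\langle\tau\rangle$) repairs the parity while still carrying $\langle\tau\rangle\rtimes\langle\theta^{(t,p-1)}\omega^{(t,p-1)}\rangle$ onto $\langle\tau\rangle\rtimes\langle\theta^{(t,p-1)}\omega\rangle$.
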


\begin{proof}
Take $s=2$, $r=2$, $n_1=p+t-1$, $n_2=1$, $m_1=t-1$, $m_2=1$ in Proposition \ref{prop_ded_d_cover}. By Lemma \ref{lem_Ass_holds}(5), Assumption \ref{ass_num} is satisfied. A $(p+t-1)$-cycle is an odd permutation if and only if $t$ is an even integer. By Proposition \ref{prop_ded_d_cover}, there is a connected $G$-Galois cover of $\bb{P}^1$ branched only at $0$ and $\infty$, where $G = A_d$ if $t$ is odd and $G=S_d$ if $t$ is even. Over $0$ the inertia groups are generated by conjugates of a $(p+t-1)$-cycle in $G$ and $I$ occurs as an inertia group above $\infty$. After a pullback via the $[p+t-1]$-Kummer cover (cf. Definition \ref{def_Kummer_pullback}) we obtain a connected $A_d$-Galois cover $\phi \colon Y \to \bb{P}^1$ \'{e}tale away from $\infty$ such that $I$ occurs as an inertia group in the cover $\phi$ above $\infty$.
\end{proof}

\section{Constructing Covers by Formal Patching}\label{sec_fp}

For the details of the Formal Patching see \cite{2}. The results of this section will be used later. Throughout this section we fix the following notation.

\begin{notation}\label{not+}
Set $R=k[[t]]$, $K=k((t))$. Let $s$ denotes the closed point of $S \coloneqq \tx{Spec}(R)$. For any integral $k$-algebra $A$ with $L = QF(A)$ and any $k$-scheme $W$, set $W_A \coloneqq W \times_k A$, $W_L \coloneqq W_A \times_A L$. For any closed point $w \in W$, $w_A \coloneqq w \times_k A$, $w_L \coloneqq w_A \times_A L$. For any integral scheme $X$ and a closed point $x \in X$ let $K_{X,x} \coloneqq \tx{QF}(\widehat{\cal{O}}_{X,x})$.
\end{notation}

The following result shows that we can construct a family of Galois covers of $S$ starting with Galois covers with smaller Galois groups, provided all that tame parts remain the same.

\begin{lemma}\label{lem_Raynaud_gen}
Let $I$ be an extension of a $p$-group by a cyclic group of order $m$, $p \nmid m$. Let $I_1$ and $I_2$ be two subgroups of $I$ such that $|I_1|/|p(I_1)| = m = |I_2|/|p(I_2)|$. For $i=1$, $2$, assume that $V_i \to S$ be a connected $I_i$-Galois cover which is totally ramified over $s$. Then there is a connected $I$-Galois cover $V \to S \times \bb{A}^1_u$ of integral schemes with branch locus $s \times \bb{A}^1_u$ over which it is totally ramified such that $V \times_{S \times \bb{A}^1_u} S \times \{u=1\} \cong \tx{Ind}_{I_1}^I V_1$ and $V \times_{S \times \bb{A}^1_u} S \times \{u=-1\} \cong \tx{Ind}_{I_2}^I V_2$ as $I$-Galois covers over $S$.
\end{lemma}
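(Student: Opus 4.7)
The plan is to construct $V$ by formal patching on the compactification $\bar{X} = S \times \bb{P}^1_u$, combining two prescribed local models near the points $(s, 1)$ and $(s, -1)$ with a middle $I$-Galois cover over the complementary region. The hypothesis $|I_i|/|p(I_i)| = m$ ensures that both covers $V_i$ have the same tame $\Z/m$-quotient, namely the unique connected $\Z/m$-Galois cover $V_0 \to S$ totally ramified at $s$; consequently the induced covers $\tx{Ind}_{I_i}^I V_i \to S$ share a common tame $I/p(I)$-quotient, obtained by pulling back $V_0$ via the projection $I \surj \Z/m$. This common tame cover will serve as the glue.

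First I would describe the local models. Over the formal neighborhoods $\hat{U}_\pm$ of $(s, \pm 1)$ in $\bar{X}$, one takes the base changes $\tx{Ind}_{I_i}^I V_i \times_S \hat{U}_\pm$; each is an $I$-Galois cover totally ramified over the branch locus and restricting to the prescribed induced cover at $u = \pm 1$. For the middle part, I would take the pulled-back $\Z/m$-cover $W_0 = V_0 \times \bb{A}^1_u \to S \times \bb{A}^1_u$ and build a $P$-Galois cover of $W_0$, where $P = p(I)$, via an Artin--Schreier--Witt-type equation whose wild coefficient is a polynomial in $u$ chosen so that its value at $u = 1$ (respectively $u = -1$) reproduces the wild part of $\tx{Ind}_{I_1}^I V_1$ (respectively $\tx{Ind}_{I_2}^I V_2$) over $V_0$. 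The scalar interpolation $h(t, u) = \frac{1+u}{2} h_1(t) + \frac{1-u}{2} h_2(t)$ in the toy Artin--Schreier case $I = \Z/p$ illustrates why such an equation exists; the general case uses the flexibility of the Artin--Schreier--Witt classification of cyclic $p$-extensions together with an inductive argument along a composition series of $P$ lifting the two prescribed degenerations.

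On the formal punctured neighborhoods of $(s, \pm 1)$, the middle cover and the local models both restrict to $I$-Galois extensions with the same fixed tame $\Z/m$-quotient, so the identifications needed to glue them are parameterized by $H^1$-classes with values in $P$, which can be arranged to match by design. This produces a patched formal $I$-Galois cover of the completion of $\bar{X}$ along the branch locus, which Grothendieck's formal existence theorem algebraizes to a bona fide $I$-Galois cover of $\bar{X}$, étale away from the branch locus and totally ramified along it; restricting to $S \times \bb{A}^1_u$ yields $V$. Connectedness is checked on the generic fiber over $\bb{A}^1_u$, where the middle model is a connected $I$-extension by construction. The main obstacle is the construction of the middle cover itself: one must exhibit a single $I$-Galois cover over a two-dimensional base that is totally ramified along the branch curve, has the fixed tame $\Z/m$-quotient, and degenerates to the two prescribed $\tx{Ind}_{I_i}^I V_i$ at the distinguished points. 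The hypothesis $m = |I_1|/|p(I_1)| = |I_2|/|p(I_2)|$ is precisely what makes the tame data globally coherent so that this $u$-interpolation of the wild piece is possible.
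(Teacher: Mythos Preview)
Your proposal identifies the right structural point—the common tame $\mathbb{Z}/m$-quotient is indeed the compatibility that makes the construction possible—but the argument is both more elaborate than necessary and has a genuine gap at its central step.

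The paper's proof is much shorter and works entirely over the generic point $K = k((t))$ of $S$. Writing $L_i$ for the fraction field of $V_i$, one first arranges that both $L_i^{p(I)}$ coincide with the same tame extension $K' = k((T))$, $T^m = t$. Then one invokes \cite[Proposition~3.11]{Ha_extn} as a black box, with $\Gamma = I$, $G = \mathbb{Z}/m$, $X = \mathbb{A}^1_{u,K}$, and $X'$ the two closed points $u = \pm 1$; this produces directly a connected $I$-Galois cover $V' \to \mathbb{A}^1_K$ with the prescribed fibers $\tx{Ind}_{I_i}^I \spec(L_i)$ at $u = \pm 1$. The desired $V$ is simply the normalization of $S \times \mathbb{A}^1_u$ in the function field of $V'$. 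No formal patching on the two-dimensional base, and no explicit interpolation of wild data, is carried out.

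The gap in your approach is precisely the construction of the ``middle cover.'' Your Artin--Schreier toy case is fine, but the passage to general $P$ via ``an inductive argument along a composition series of $P$'' is where all the content lies, and you have not supplied it. For non-abelian $P$ there is no Artin--Schreier--Witt description to interpolate; one must solve a tower of embedding problems over $W_0$ with prescribed local conditions at $u = \pm 1$, and at each stage verify that the $\mathbb{Z}/m$-action extends compatibly so that the resulting extension is $I$-Galois and not merely $P$-Galois over $W_0$. This is exactly what \cite[Proposition~3.11]{Ha_extn} accomplishes, and its proof is itself a nontrivial patching argument. Moreover, once such a middle cover exists with the correct specializations at $u = \pm 1$, your subsequent formal patching near $(s,\pm 1)$ is redundant: the middle cover already \emph{is} $V$. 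So your plan either collapses to ``cite Harbater's proposition and normalize''—which is the paper's proof—or it requires you to reprove that proposition, which the sketch does not do.
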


\begin{proof}
Let $i \in \{1,2\}$. Then $V_i = \tx{Spec}(A_i)$ for a complete discrete valuation ring $A_i$ with residue field $k$. Let $L_i$ be the field of fractions of $A_i$. After a change of variable we may assume that $L_i^{p(I)} = K' = k((T))$ where the extension $K'/K$ is given by $T^m = t$. Consider the trivial deformation $\tx{Spec}(K'[u]) \to \tx{Spec}(K[u])$ of integral $K$-curves of the cover $\tx{Spec}(K') \to \tx{Spec}(K)$. The compositions
$$\tx{Ind}_{I_1}^I\tx{Spec}(L_1) \to \tx{Spec}(K') \to \tx{Spec}(K) = \tx{Spec}(K[u]) \times_k (u=1),$$
$$\tx{Ind}_{I_2}^I\tx{Spec}(L_2) \to \tx{Spec}(K') \to \tx{Spec}(K) = \tx{Spec}(K[u]) \times_k (u=-1)$$
are (possibly disconnected) $I$-Galois covers. By applying \cite[Proposition 3.11]{Ha_extn} with $\Gamma = I$, $G =\bb{Z}/m$, $X$ as the affine $u$-line over $K$ and $X'$ as the closed subset of $X$ consisting of the points $(u=1)$ and $(u=-1)$, we obtain a connected $p(I)$-Galois cover $V' \to \bb{A}^1_{K'}$ of integral curves such that the composition $f \colon V' \to \bb{A}^1_{K}$ is a connected $I$-Galois cover and such that $V' \times_{\bb{A}^1_u} (u=1) \cong \tx{Ind}_{I_1}^I\tx{Spec}(L_1)$ and $V' \times_{\bb{A}^1_u} (u=-1) \cong \tx{Ind}_{I_2}^I\tx{Spec}(L_2)$ as $I$-Galois covers of $\tx{Spec}(K)$. Take $V$ to be the normalization of $S \times \bb{A}^1_u$ in the function field of $V'$. Then the cover $V \to S \times \bb{A}^1_u$ satisfies the stated properties.
\end{proof}

We obtain the following result using the Formal patching results from \cite{Ha_AC} and \cite{DK} and the above lemma provides a compatibility of local extensions. Similar results appeared in the literature for the purely wild ramification (cf. \cite[Theorem 2.2.3]{3}).

\begin{theorem}\label{thm_Raynaud_gen}
Let $G$ be a finite group, $I \subset G$ be an extension of a $p$-group by a cyclic group of order $m$, $p \nmid m$. Let $G_1$, $G_2$ be subgroups of $G$. Let $X$ be a smooth projective connected $k$-curve. Assume the existence of the following Galois covers.
\begin{enumerate}
\item Let $f_1 \colon Y_1 \to X$ be a connected $G_1$-Galois cover \'{e}tale away from $B_1 \subset X$. Let $x \in B_1$ be a closed point and let $I_1$ occurs as an inertia group above $x$. For $x_1 \neq x$ in $B_1$ let $J_{x_1}$ occurs as an inertia group above $x_1$.
\item Let $f_2 \colon Y_2 \to \bb{P}^1$ be a connected $G_2$-Galois cover with branch locus $B_2 \subset \bb{P}^1$, $0 \in B_2$. Let $I_2$ occurs as an inertia group above $0$ and for points $y \neq 0$ in $B_2$ let $I'_y$ occurs as an inertia group above $y$.
\end{enumerate}
Suppose that $I_1$ and $I_2$ are subgroups of $I$ such that $|I_1/p(I_1)| = m = |I_2/p(I_2)|$. If $G = \langle G_1, G_2, I \rangle$, then there is subset $B_0 \subset X$ disjoint from $B_1$ with $|B_0| = |B_2|-1$ and a connected $G$-Galois cover of $X$ \'{e}tale away from $B_1 \sqcup B_0$ such that $I$ occurs as an inertia group above $x$, for $x_1 \neq x$ in $B_1$, $J_{x_1}$ occurs as an inertia group above $x_1$ and for $x_0$ in $B_0$, $I'_{x_0}$ occurs as an inertia group above $x_0$.
\end{theorem}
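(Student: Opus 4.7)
The proof follows Raynaud's formal patching strategy used in the purely wild case \cite[Theorem 2.2.3]{3}, enhanced to accommodate a non-trivial tame quotient via Lemma \ref{lem_Raynaud_gen}. The plan is to glue $f_1$ (which provides the global structure of the cover on $X$ away from $x$) with $f_2$ (which provides the local inertia $I$ at $x$ together with the new branches $B_0$) along a formal neighborhood of $x$, using the interpolating $I$-Galois family from Lemma \ref{lem_Raynaud_gen} to furnish the required compatibility of tame ramification at the gluing locus.

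First I would localize. Completing $f_1$ at a point of $Y_1$ above $x$ produces a connected $I_1$-Galois cover $V_1 \to S$, where $S = \tx{Spec}(\widehat{\cal{O}}_{X,x})$, totally ramified over the closed point; completing $f_2$ at a point above $0 \in \bb{P}^1$ produces a connected $I_2$-Galois cover $V_2 \to S$. The tame-quotient hypothesis $|I_1/p(I_1)| = m = |I_2/p(I_2)|$ lets me apply Lemma \ref{lem_Raynaud_gen} to obtain a connected $I$-Galois cover $V \to S \times \bb{A}^1_u$, totally ramified along $s \times \bb{A}^1_u$, with $V|_{u=1} \cong \tx{Ind}_{I_1}^I V_1$ and $V|_{u=-1} \cong \tx{Ind}_{I_2}^I V_2$.

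Next I would form the nodal $k$-curve $\widetilde{X} = X \cup_{x=0} \bb{P}^1$ and realize it as the fibre over $u=0$ of a smoothing family $\cal{X} \to \bb{A}^1_u$ whose other $k$-fibres are smooth curves isomorphic to $X$. I assemble a $G$-Galois cover of $\cal{X}$ by Formal Patching (\cite{Ha_AC}, \cite{DK}) of three pieces: on the $X$-component of the special fibre the induced cover $\tx{Ind}_{G_1}^G f_1$; on the $\bb{P}^1$-component the induced cover $\tx{Ind}_{G_2}^G f_2$; and in a formal neighborhood of the nodal section the $G$-Galois family $\tx{Ind}_I^G V$. The compatibilities required on the punctured formal disks at the node are exactly what Lemma \ref{lem_Raynaud_gen} guarantees, since $V|_{u=1}$ and $V|_{u=-1}$ reproduce the local restrictions of $\tx{Ind}_{G_1}^G f_1$ and $\tx{Ind}_{G_2}^G f_2$ there. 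Specializing the resulting family to a closed fibre $\cal{X}_c$ with $c \neq 0$ yields a $G$-Galois cover of a smooth $k$-curve isomorphic to $X$, branched over $B_1$ together with a set $B_0$ consisting of the $|B_2|-1$ points into which the branches of $f_2$ on the $\bb{P}^1$-component have specialized; for a generic choice of $c$ these points are disjoint from $B_1$. By construction, the inertia above $x$ is $I$, above each $x_1 \in B_1 \setminus \{x\}$ is the unchanged $J_{x_1}$, and above each $x_0 \in B_0$ is the corresponding $I'_y$. Connectedness of the output follows from $G = \langle G_1, G_2, I \rangle$ because the monodromy group of the patched cover is generated by the monodromies of the three pieces.

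The principal technical obstacle is the tame-quotient matching at the node. In the purely wild setting of \cite[Theorem 2.2.3]{3} one may take $m=1$ and then the compatibility over the punctured formal disk is essentially automatic; here, in contrast, $V_1$ and $V_2$ must first be realized as specializations of a single $I$-Galois family so that their tame quotients are identified consistently inside $I$. This is exactly the role played by Lemma \ref{lem_Raynaud_gen}, and without it the formal patching step would not produce a $G$-Galois cover with the prescribed inertia $I$ above $x$.
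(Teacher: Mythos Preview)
Your overall architecture is right---Lemma \ref{lem_Raynaud_gen} supplies the interpolating $I$-family, and the final step is a formal patching over a degeneration of $X$ to $X \cup \bb{P}^1$---but the way you fuse these two ingredients contains a genuine gap. You propose to glue $\tx{Ind}_{G_1}^G f_1$, $\tx{Ind}_{G_2}^G f_2$, and $\tx{Ind}_I^G V$ along a single nodal section in one shot. The trouble is that the compatibilities you cite live at \emph{different} values of $u$: the fibre $V|_{u=1}$ agrees with the completion of $f_1$ at $x$, while $V|_{u=-1}$ agrees with the completion of $f_2$ at $0$. At a node with local ring $k[[x,y]]$, the patching datum is a \emph{single} $I$-cover whose restrictions to the two punctured formal disks $\tx{Spec}\,k((x))$ and $\tx{Spec}\,k((y))$ must simultaneously match the local pieces coming from the $X$-branch and the $\bb{P}^1$-branch. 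You cannot arrange these two restrictions to be two different fibres of $V$; conflating the $u$-parameter of Lemma \ref{lem_Raynaud_gen} with a smoothing parameter for $\cal{X}$ does not resolve this, because the node only exists at one value of the smoothing parameter.

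The paper fixes this by inserting an intermediate step you have skipped. Before any nodal patching, one applies \cite[Lemma 3.2, Lemma 3.3]{DK} separately to $(f_1,g)$ and to $(f_2,g)$, where $g$ is the $I$-family $V \to \tx{Spec}(k[[x]][u])$. This produces dense opens $\cal{W}_1,\cal{W}_2 \subset \bb{A}^1_u$ such that for each closed point $(u=a)$ in $\cal{W}_1 \cap \cal{W}_2$ there exist a connected $\langle G_1,I\rangle$-cover $h_1\colon Z_1 \to X$ and a connected $\langle G_2,I\rangle$-cover $h_2\colon Z_2 \to \bb{P}^1$ whose completed local rings at suitable points over $x$ and $0$ are \emph{the same} fibre $V|_{u=a}$ as $I$-covers of $\tx{Spec}(k[[x]])$. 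Only after this synchronization does one build the degeneration $T^*$ over $k[[t]]$ (a separate parameter) with closed fibre $X \cup \bb{P}^1$ and apply \cite[Proposition 2.3]{Ha_AC} to patch $h_1$, $h_2$, and the trivial deformation of the common local $I$-cover at the node; specialization via \cite[Corollary 2.7]{Ha_AC} then yields the desired $G$-cover of $X$. Your sketch becomes correct once you replace the direct three-piece patch by this two-stage procedure.
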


\begin{proof}
Let $y_1 \in Y_1$ with $f_1(y_1) = x$ such that $I_1$ is the Galois group of the field extension $K_{Y_1,y_1}/K_{X,x}$. Also let $y_2 \in Y_2$ be a point lying above $\infty$ such that $I_2$ is the Galois group of the field extension $K_{Y_2,y_2}/k((x))$. We identify $\widehat{\mathcal{O}}_{X,x}$ with $k[[x]]$. Then taking $V_1 = \tx{Spec}(\widehat{\mathcal{O}}_{Y_1,y_1}) \to \tx{Spec}(k[[x]])$ and $V_2 = \tx{Spec}(\widehat{\mathcal{O}}_{Y_2,y_2}) \to \tx{Spec}(k[[x]])$ in Lemma \ref{lem_Raynaud_gen} we obtain a connected $I$-Galois cover $g \colon V \to \tx{Spec}(k[[x]][u])$ of integral schemes with branch locus $x \times \bb{A}^1_u$ over which it is totally ramified and such that $V \times_{\tx{Spec}(k[[x]][u])} \tx{Spec}(k[[x]]) \times \{u=1\} \cong \tx{Ind}_{I_1}^I V_1$ and $V \times_{\tx{Spec}(k[[x]][u]} \tx{Spec}(k[[x]]) \times \{u=-1\} \cong \tx{Ind}_{I_2}^I V_2$ as $I$-Galois covers over $\tx{Spec}(k[[x]])$. Then the cover $f_1$ and the $I$-Galois cover $g$ satisfy the hypothesis of \cite[Lemma 3.2]{DK}. By \cite[Lemma 3.3]{DK}, there is an open dense set $\cal{W}_1 \subset \bb{A}^1_u$ such that for all closed point $(u=\beta)$ in $\cal{W}_1$ the following holds. There is a connected $H_1 \coloneqq \langle G_1, I \rangle$-Galois cover $h_1 \colon Z_1 \to X$ \'{e}tale away from $B_1$ such that there is a point $z_1 \in Z_1$ above $x$ for which $\tx{Spec}(\widehat{\cal{O}}_{Z_1,z_1})$ is equal to the fibre of the cover $g \colon V \to \tx{Spec}(k[[x]][u])$ over $(u=\beta)$ as $I$-Galois covers over $\tx{Spec}(k[[x]])$, and for any point $x_1 \neq x$ in $B_1$, $J_{x_1}$ occurs as an inertia group above $x_1$. Similarly, the covers $f_2$ and $g$ satisfy the hypothesis of \cite[Lemma 3.2]{DK} and by \cite[Lemma 3.3]{DK}, there is an open dense set $\cal{W}_2 \subset \bb{A}^1_u$ such that for all closed point $(u=\alpha)$ in $\cal{W}_2$ the following holds. There is a connected $H_2 \coloneqq \langle G_2, I \rangle$-Galois cover $h_2 \colon Z_2 \to \bb{P}^1$ \'{e}tale away from $B_2$ such that there is a point $z_2 \in Z_2$ above $0$ for which $\tx{Spec}(\widehat{\cal{O}}_{Z_2,z_2})$ as an $I$-Galois cover of $\tx{Spec}(k[[x]])$ is equal to the fibre of $g$ over $(u=\alpha)$. We fix a closed point $(u=a)$ in $\cal{W}_1 \cap \cal{W}_2$ and consider the corresponding covers $h_1 \colon Z_1 \to X$ and $h_2 \colon Z_2 \to \bb{P}^1$ as above. Then $\tx{Spec}(\widehat{\cal{O}}_{Z_1,z_1})$ and $\tx{Spec}(\widehat{\cal{O}}_{Z_2,z_2})$ are isomorphic $I$-Galois covers of $\tx{Spec}(k[[x]])$.

Let $T^*$ be a regular irreducible projective $R$-curve with generic fibre $X_K$ together with a cover $T^* \to \bb{P}^1_R$ and whose closed fibre $T'$ is the union of two irreducible components $X$ and $\bb{P}_y^1$ meeting at a point $\eta$ and such that the complete local ring of $T^*$ at $\eta$ is given by $\widehat{\cal{O}}_{T^*,\tau} = k[[x,y]][t]/(t-xy) \cong k[[x,y]]$. Now consider the trivial deformation $\tx{Spec}(\widehat{\cal{O}}_{Z_2,z_2}[y]) \to \tx{Spec}(k[[x]][y])$. Let $\widehat{N}^* \coloneqq \tx{Spec}(\widehat{\cal{O}}_{Z_2,z_2}[y]) \times_{\tx{Spec}(k[[x]][y])} \tx{Spec}(k[[x,y]])$. Let $X-x = \tx{Spec}(A)$, $Z_1 - z_1 = \tx{Spec}(B_1)$ and $Z_2 - z_2 = \tx{Spec}(B_2)$. Then the hypothesis of \cite[Proposition 2.3]{Ha_AC} is satisfied with the covers $W_1^{'*} = \tx{Spec}(B_1[[t]]) \to X^{'*}_1 = \tx{Spec}A[[t]]$ and $W_2^{'*} = \tx{Spec}(B_2[[t]]) \to X_2^{'*} = \tx{Spec}(k[y^{-1}][[t]])$ induced by $h_1$ and $h_2$, respectively, and with the isomorphisms $\widehat{N}^* \times_{\tx{Spec}(k[[x,y]])} \tx{Spec}(K_{X,x}[[t]]) \cong \tx{Spec}(K_{Z_1,z_1})$ and $\widehat{N}^* \times_{\tx{Spec}(k[[x,y]])} \tx{Spec}(k((y))[[t]]) = \tx{Spec}(K_{Z_2,z_2})$. By loc. cit. there is an irreducible normal $G$-Galois cover $h^* \colon V^* \to T^*$ such that $V^* \times_{T^*} X^{'*}_1 \cong \tx{Ind}_{H_1}^G W^{'*}_1$, $V^* \times_{T^*} X^{'*}_2 \cong \tx{Ind}_{H_2}^G W^{'*}_2$ and $V^* \times_{T^*} \widehat{\cal{O}}_{T^*,\eta} \cong \tx{Ind}_I^G\widehat{N}^*$ as $G$-Galois covers. Consider the generic fibre $h^0 \colon V^0 \to X_K$ of the cover $h^*$. Then there is a set $B_0 \in X$ disjoint from $B_1$ with $|B_0| = |B_2 - 0|$ such that $h^0$ is \'{e}tale away from $\{x'_K | x' \in B_1 \sqcup B_0\}$, $I$ occurs as an inertia group above $x_K$, for $x_1 \neq x$ in $B_1$, $J_{x_1}$ occurs as an inertia group above $x_{1,K}$ and for $x_0$ in $B_0$, $I'_{x_0}$ occurs as an inertia group above $x_{0,K}$. Since $Z_1 \times_R K$, $Z_2 \times_R K$ and $T \times_R K$ are smooth over $K$, $V_0$ is also smooth over $K$. Also since $T'$ is generically smooth and the cover $V^* \to T^*$ is generically unramified, the closed fibre $V^* \times_{T^*} T' \to T'$ of $h^*$ is generically smooth. Now the result follows by \cite[Corollary 2.7]{Ha_AC}.
\end{proof}

By induction on $n$ and using the above theorem, we obtain the following result which generalizes a patching result by Raynaud (\cite[Theorem 2.2.3]{3}). This result allows us to construct Galois covers with control on the inertia groups from covers with smaller Galois groups. As an application (Lemma \ref{lem_Alt_FP}) of it we will see that we can restrict ourselves to a fewer cases of proving the IC.

\begin{corollary}\label{cor_Raynaud_gen}
Let $n \geq 2$ be an integer. Let $G$ be a finite group, $I \subset G$ be an extension of a $p$-group by a cyclic group of order $m$, $p \nmid m$. For $1 \leq i \leq n$, let $G_i$ be subgroups of $G$, $I_i$ be a subgroup of $I$ with $|I_i/p(I_i)| = m$ such that the following hold.
\begin{enumerate}
\item $X$ is a smooth projective connected $k$-curve, $f \colon Y \to X$ is a connected $G_1$-Galois cover \'{e}tale away from $B \subset X$. Let $x \in B$ be a closed point and let $I_1$ occurs as an inertia group above $x$. For $x' \neq x$ in $B$ let $J_{x'}$ occurs as an inertia group above $x'$.
\item For each $2 \leq i \leq n$ the pair $(G_i,I_i)$ is realizable.
\end{enumerate}
If $G = \langle G_1, \cdots, G_n, I \rangle$ then there is a connected $G$-Galois cover $Z \to X$ \'{e}tale away from $B$ such that $I$ occurs as an inertia group above $x$ and for $x' \neq x$ in $B$, $J_{x'}$ occurs as an inertia group above $x'$.
\end{corollary}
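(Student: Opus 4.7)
The plan is a straightforward induction on $n \geq 2$, using Theorem \ref{thm_Raynaud_gen} both as the base case and as the inductive engine.

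For the base case $n=2$, I would apply Theorem \ref{thm_Raynaud_gen} directly. Take the first cover in the theorem to be $f$ itself, and take the second cover to come from the realizability of the pair $(G_2, I_2)$: this gives a connected $G_2$-Galois cover of $\bb{P}^1$ branched only at $\infty$ with $I_2$ as an inertia group above $\infty$; pulling back via the coordinate change $x \mapsto 1/x$ moves the unique branch point to $0$, so $B_2 = \{0\}$ and the condition at points of $B_2 \setminus \{0\}$ is vacuous. All hypotheses of Theorem \ref{thm_Raynaud_gen} are in force ($|I_1/p(I_1)| = m = |I_2/p(I_2)|$ is given, and $G = \langle G_1, G_2, I\rangle$ is the standing assumption), and since $|B_0| = |B_2|-1 = 0$, the resulting $G$-cover is étale away from $B$ with $I$ above $x$ and $J_{x'}$ above the other points of $B$, as required.

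For the inductive step, assume the statement for $n-1$ and set $G' \coloneqq \langle G_1, \ldots, G_{n-1}, I\rangle$. Applying the inductive hypothesis with $G$ replaced by $G'$ (and with the same $f$, $I$, $I_1, \ldots, I_{n-1}$) yields a connected $G'$-Galois cover $f' \colon Y' \to X$ étale away from $B$ with $I$ as inertia above $x$ and $J_{x'}$ as inertia above $x' \neq x$ in $B$. Then apply Theorem \ref{thm_Raynaud_gen} with this $f'$ playing the role of the first cover and the realization of $(G_n, I_n)$ (coordinate-shifted so that its branch locus is $\{0\}$) playing the role of the second. The theorem's hypothesis now demands $|I/p(I)| = m$ in the slot previously occupied by $I_1$; this holds because $I$ is an extension of a $p$-group by a cyclic group of order $m$ with $(m,p)=1$, so $p(I)$ equals the normal $p$-subgroup and $I/p(I) \cong \bb{Z}/m$. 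Since $\langle G', G_n, I\rangle = \langle G_1, \ldots, G_n, I\rangle = G$, the theorem produces a $G$-Galois cover of $X$ étale away from $B$ (again $|B_0|=0$) with the prescribed inertia structure.

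There is no serious obstacle; the substantive patching work is entirely contained in Theorem \ref{thm_Raynaud_gen}, and the corollary is a bookkeeping induction. The only points to verify carefully are that the hypothesis $|I/p(I)| = m$ is preserved when $I$ itself is fed back into the role of $I_1$ at each inductive stage, and that the generation condition $G = \langle G_1,\ldots,G_n, I\rangle$ reassembles correctly as $\langle G', G_n, I\rangle$ in the final application of the theorem.
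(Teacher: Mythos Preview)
Your proposal is correct and matches the paper's own argument, which simply states that the corollary follows by induction on $n$ using Theorem \ref{thm_Raynaud_gen}. The details you spell out (moving the branch point of the realizability cover to $0$ so that $|B_0|=0$, and feeding $I$ back in as the new $I_1$ in the inductive step after checking $|I/p(I)|=m$) are precisely the bookkeeping the paper leaves implicit.
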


For our next result let us fix the following notation.

\begin{notation}\label{not_for_pdt}
Let $G_1$, $G_2$ be two finite groups, $X$ a smooth projective connected $k$-curve. Let $B \subset X$ be a finite set of closed points. Assume that for $i=1$, $2$, there is a connected $G_i$-Galois cover $f_i \colon Y_i \to X$ \'{e}tale away from $B$ such that a $p$-group (possibly trivial) $P_{x,i}$ occurs as the inertia group above $x \in B$. For each $x \in B$, let $Q_x$ be a $p$-group (possibly trivial), and for $i=1$, $2$, let $N_{x,i}$ be a normal subgroup of $P_{x,i}$ such that $P_{x,i}/N_{x,i} \cong Q_x$.
\end{notation}

The following result shows that in the set up of the above Notation \ref{not_for_pdt}, certain kind of field extensions can be realized as local extensions by the Galois covers for both the groups $G_1$ and $G_2$. This will be used (Theorem \ref{thm_pdt_arbit_groups}) to show that the GPWIC is true for certain product of groups. As in Theorem \ref{thm_Raynaud_gen}, we again use a technique from \cite{DK} to realize a common local extension at a point over $X$ for two different covers.

\begin{lemma}\label{lem_common_cover_for_product}
Assume that Notation \ref{not_for_pdt} hold. Then for $i=1$, $2$, there is a connected $G_i$-Galois cover $Z_i \to X$ \'{e}tale away from $B$ such that $P_{x,i}$ occurs as an inertia group above $x \in B$ and such that there is a point $z_i \in Z_i$ over $x$ with $K_{Z_1,z_1}^{N_{x,1}}/K_{X,x} \cong K_{Z_2,z_2}^{N_{x,2}}/K_{X,x}$ as $Q_x$-Galois extensions of $K_{X,x}$.
\end{lemma}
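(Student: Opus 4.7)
The plan is to retain $Z_1 = Y_1$ as the $G_1$-Galois cover and modify only $Y_2$ so that, at chosen points above each $x \in B$, the $N_{x,2}$-fixed subfield of the local extension becomes isomorphic to a prescribed $Q_x$-Galois extension $M_x$ of $K_{X,x}$. As a natural choice I take $M_x := K_{Y_1,y_1^x}^{N_{x,1}}$ for a chosen point $y_1^x \in Y_1$ above $x$; then the conclusion for $i = 1$ holds with $z_1 = y_1^x$. The task reduces to producing a connected $G_2$-Galois cover $Z_2 \to X$ \'etale away from $B$, with inertia $P_{x,2}$ at each $x \in B$, whose local extension at a chosen point above each $x$ has $N_{x,2}$-fixed subfield isomorphic to $M_x$.

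The first step is to exhibit, for each $x \in B$, a $P_{x,2}$-Galois extension $W_x/K_{X,x}$ with $W_x^{N_{x,2}} \cong M_x$ as $Q_x$-Galois extensions. This amounts to solving the local embedding problem $N_{x,2} \hookrightarrow P_{x,2} \twoheadrightarrow Q_x$ above the surjection corresponding to $M_x$. Since $K_{X,x} \cong k((t))$ with $k$ algebraically closed of characteristic $p$, the maximal pro-$p$ quotient of the absolute Galois group of $K_{X,x}$ is a free pro-$p$ group; consequently every continuous surjection onto the $p$-group $Q_x$ lifts through $P_{x,2} \twoheadrightarrow Q_x$, furnishing the desired $W_x$. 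When $Q_x$ is trivial there is nothing to arrange.

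Next I would interpolate between the existing local extension of $Y_2$ at some chosen $y_2^x$ above $x$ and the target $W_x$. Apply Lemma \ref{lem_Raynaud_gen} with $I = I_1 = I_2 = P_{x,2}$ and $m = 1$: since $P_{x,2}$ is a $p$-group the condition on tame quotients is vacuous, and one obtains a $P_{x,2}$-Galois family $g_x \colon V_x \to \tx{Spec}(\widehat{\cal{O}}_{X,x})[u_x]$ whose fibres over $u_x = 1$ and $u_x = -1$ are $\tx{Spec}(\widehat{\cal{O}}_{Y_2, y_2^x})$ and $\tx{Spec}(\cal{O}_{W_x})$, respectively. Running the formal-patching argument of Theorem \ref{thm_Raynaud_gen} (based on Lemmas 3.2 and 3.3 of \cite{DK}) at all branch points simultaneously, one assembles a $G_2$-Galois cover $\widetilde{Z}_2 \to X \times \bb{A}^{|B|}$. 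At any closed point in the intersection of the dense open sets provided by the DK lemma at each $x \in B$, the corresponding fibre $Z_2 \to X$ is a connected $G_2$-Galois cover \'etale away from $B$, with inertia $P_{x,2}$ and local extension isomorphic to $W_x$ at a chosen point $z_2$ above each $x$. Then $K_{Z_2, z_2}^{N_{x,2}} \cong W_x^{N_{x,2}} \cong M_x \cong K_{Z_1, z_1}^{N_{x,1}}$ as $Q_x$-Galois extensions, which is the required conclusion.

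The principal technical obstacle is running the DK construction simultaneously at all $x \in B$: as stated in the proof of Theorem \ref{thm_Raynaud_gen}, the single-branch-point construction controls the local extension only at the chosen point while preserving only the inertia groups (not the specific local extensions) at other points of $B$. An iterative application would therefore risk disturbing the local extensions previously set elsewhere. I would address this by enlarging the parameter space to $\bb{A}^{|B|}$ and carrying out a multiparameter version of the patching—since the local families at distinct points of $B$ are supported on disjoint formal neighbourhoods, such a generalization is routine—producing a single global family from which a simultaneously good specialization can be extracted.
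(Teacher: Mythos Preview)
Your argument has a gap at the specialization step. You build a local $P_{x,2}$-family $V_x$ whose fibres at $u_x = 1$ and $u_x = -1$ are $\tx{Spec}(\widehat{\cal{O}}_{Y_2,y_2^x})$ and $\tx{Spec}(\cal{O}_{W_x})$ respectively, and then invoke the DK machinery to globalize. But \cite[Lemma 3.3]{DK} only guarantees that for $\beta$ in a dense open subset of $\bb{A}^1_{u_x}$ there is a $G_2$-cover $Z_2 \to X$ whose local extension at a chosen point $z_2$ above $x$ is the fibre of $V_x$ \emph{at that same} $\beta$; nothing forces $\beta = -1$ to lie in this open set. Thus the local extension of $Z_2$ at $z_2$ is the fibre of $V_x$ at $\beta$, not $W_x$, and its $N_{x,2}$-fixed subfield has no reason to equal $M_x$. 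Your concluding isomorphism $K_{Z_2,z_2}^{N_{x,2}} \cong M_x$ is therefore unjustified, and the strategy of keeping $Z_1 = Y_1$ fixed while targeting a prescribed local extension for $Z_2$ breaks down.

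The paper circumvents this by modifying \emph{both} covers symmetrically. It first builds a single $Q_x$-family $V \to \tx{Spec}(\widehat{\cal{O}}_{X,x}) \times \bb{A}^1_u$ (via Lemma~\ref{lem_Raynaud_gen} applied with $I = I_1 = I_2 = Q_x$) interpolating the two $Q_x$-subextensions $\widehat{\cal{O}}_{Y_1,y_1}^{N_{x,1}}$ and $\widehat{\cal{O}}_{Y_2,y_2}^{N_{x,2}}$. Then, using \cite[Theorem 3.11]{Ha_extn}, it extends $V$ to $P_{x,i}$-families $g_i \colon W_i \to \tx{Spec}(\widehat{\cal{O}}_{X,x}) \times \bb{A}^1_u$ which \emph{dominate} $V$ and agree with $\tx{Spec}(\widehat{\cal{O}}_{Y_i,y_i})$ at the appropriate endpoint. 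The key feature is that the $N_{x,i}$-fixed part of every fibre of $g_i$ is the corresponding fibre of the common $V$. Hence after running the DK construction for both $i$ and specializing at a \emph{common} point $\beta \in \cal{W}_{x,1} \cap \cal{W}_{x,2}$, the $Q_x$-subextensions $K_{Z_1,z_1}^{N_{x,1}}$ and $K_{Z_2,z_2}^{N_{x,2}}$ are both the fibre of $V$ at $\beta$, and so coincide whatever $\beta$ happens to be. This is precisely what your asymmetric approach cannot arrange.
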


\begin{proof}
Let $x \in B$, $i = 1$, $2$. Without loss of generality we may assume that all the groups $P_{x,i}$ and $Q_x$ are non-trivial. Let $y_i \in Y_i$ with $f_i(y_i)=x$ such that $P_{x,i}$ is the Galois group of the field extension $K_{Y_i,y_i}/K_{X,x}$. Take $I_1 = I_2 = I = Q_x$ and $V_i = \tx{Spec}(\widehat{\cal{O}}_{Y_i,y_i}^{N_{x,i}})$ in Lemma \ref{lem_Raynaud_gen}. Then we obtain a connected $Q_x$-Galois cover $V \to \tx{Spec}(\widehat{\cal{O}}_{X,x}) \times \bb{A}^1_u$ of integral schemes with branch locus $x \times \bb{A}^1_u$ over which it is totally ramified such that $V \times_{\bb{A}^1_u} \{u=1\} \cong V_1$ and $V \times_{\bb{A}^1_u} \{u=-1\} \cong V_2$ as $Q_x$-Galois covers over $\tx{Spec}(\widehat{\cal{O}}_{X,x})$. By \cite[Theorem 3.11]{Ha_extn}, there is a connected $P_{x,i}$-Galois cover $g_i \colon W_i \to \tx{Spec}(\widehat{\cal{O}}_{X,x}) \times \bb{A}^1_u$ of integral schemes dominating $V \to \tx{Spec}(\widehat{\cal{O}}_{X,x}) \times \bb{A}^1_u$ such that the fibre of $g_1$ over $(u=1)$ is $\tx{Spec}(\widehat{\cal{O}}_{Y_1,y_1})$ and the fibre of $g_2$ over $(u=-1)$ is $\tx{Spec}(\widehat{\cal{O}}_{Y_2,y_2})$. Then by \cite[Lemma 3.2]{DK}, there are connected $G_i$-Galois covers of $X_R$ satisfying the hypothesis of \cite[Lemma 3.3]{DK}. Using \cite[Remark 3.4]{DK}, for each $x \in B$, $i \in \{1,2\}$, choose a dense open set $\cal{W}_{x,i}\subset \bb{A}^1_u$ and fix a closed point in $\cal{W}_{x,1} \cap \cal{W}_{x,2}$. Then for $i=1$, $2$, there is a connected $G_i$-Galois cover $Z_i \to X$ \'{e}tale away from $B$ such that $P_{x,i}$ occurs as an inertia group above $x \in B$ and such that there is a point $z_i \in Z_i$ over $x$ with $K_{Z_1,z_1}^{N_{x,1}}/K_{X,x} \cong K_{Z_2,z_2}^{N_{x,2}}/K_{X,x}$ as $Q_x$-Galois extensions.
\end{proof}

We recall and restate the following theorem due to Harbater which will be used throughout Section \cref{sec_GPWIC}--\cref{sec_que_evidence}.

\begin{theorem}\label{thm_patching_covers}
(\cite[Corollary to Patching Theorem]{Ha_two_patch}) Let $r \geq 1$ be an integer. Let $G$ be a finite group, $G_1$ and $G_2$ be two subgroups of $G$ such that $G = \langle G_1 , G_2 \rangle$. Let $X$ be a smooth projective connected $k$-curve. Let $B$ be a finite set of closed points of $X$ containing a point $x_0$. Let $B' \coloneqq \{\eta_0, \cdots, \eta_r\}$ be a set of distinct points of $\bb{P}^1$. Let $a \in G_1 \cap G_2$ be an element of order prime-to-$p$. Assume that
\begin{enumerate}
\item there is a connected $G_1$-Galois cover $f \colon Y \to X$ \'{e}tale away from $B$ such that $I_x$ occurs as an inertia group above $x \in B$ and $I_{x_0} = \langle a \rangle$;
\item there is a connected $G_2$-Galois cover $g \colon W \to \bb{P}^1$ \'{e}tale away from $B'$ such that $J_i$ occurs as an inertia group above $\eta_i$, $1 \leq i \leq r$, and such that $J_0 = \langle a^{-1} \rangle$.
\end{enumerate}
Then there is a set $B''= \{x_1, \cdots, x_r\}$ of closed points of $X$ disjoint from $B$ and a connected $G$-Galois cover of $X$ \'{e}tale away from $B \sqcup B''$ such that $I_x$ occurs as an inertia group above $x \in B \setminus \{x_0\}$, $J_r$ occurs as an inertia group above $x_0$ and $J_i$ occurs as an inertia group above $x_i$, $1 \leq i \leq r-1$.
\end{theorem}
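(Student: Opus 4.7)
The strategy is Harbater's two-point formal patching, in the spirit of the proof of Theorem \ref{thm_Raynaud_gen}. The plan is to build a two-component semi-stable model of $X$ by attaching a $\bb{P}^1$-component at $x_0$, place the cover $f$ on the $X$-component and the cover $g$ on the $\bb{P}^1$-component, glue them across the node along their common tame cyclic inertia $\langle a \rangle$, and then pass to the generic fibre to read off the desired $G$-Galois cover of $X$.

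Concretely, set $R = k[[t]]$, $K = k((t))$, and let $T^* \to \bb{P}^1_R$ be a regular projective $R$-curve with generic fibre $X_K$ and closed fibre $T' = X \cup \bb{P}^1_y$ meeting at a single node $\tau$ identified with $x_0$ on the $X$-component and with $\eta_0$ on the $\bb{P}^1$-component, so that $\widehat{\cal{O}}_{T^*,\tau} \cong k[[x,y]]$ with $xy = t$. Set $I \coloneqq \langle a \rangle \subset G_1 \cap G_2$, a cyclic group of order prime to $p$. By Kummer theory the unique tame $I$-Galois cover $\widehat{N}^* \to \tx{Spec}(k[[x,y]])$ branched along $xy = 0$ restricts along $\tx{Spec}(k[[x]])$ to the local $I$-extension of $f$ at $x_0$ and along $\tx{Spec}(k[[y]])$ to the local $I$-extension of $g$ at $\eta_0$; the flip $a \leftrightarrow a^{-1}$ between the two sides is the standard monodromy convention across a node, which is exactly why hypothesis (2) requires $J_0 = \langle a^{-1} \rangle$. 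Induce all three local pieces up to $G$ and apply the formal patching theorem \cite[Proposition 2.3]{Ha_AC} (used already in the proof of Theorem \ref{thm_Raynaud_gen}) to the trivial $R$-thickenings $\tx{Ind}_{G_1}^G (Y \times_k R)$ on the open $(X \setminus \{x_0\}) \times_k R$ and $\tx{Ind}_{G_2}^G (W \times_k R)$ on the open $(\bb{P}^1 \setminus \{\eta_0\}) \times_k R$ together with $\tx{Ind}_I^G \widehat{N}^*$ at $\tau$; the hypothesis $G = \langle G_1, G_2 \rangle$ together with $I \subset G_1 \cap G_2$ guarantees that the patched $G$-Galois cover $h^* \colon V^* \to T^*$ is irreducible.

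Finally, pass to the generic fibre $h^0 \colon V^0 \to X_K$. By a standard specialization argument in the style of \cite[Corollary 2.7]{Ha_AC} the branch points of $h^*$ lying on the $\bb{P}^1$-component other than $\eta_0$, namely $\eta_1, \ldots, \eta_r$, spread out in the deformation to $r$ distinct closed points $x_1, \ldots, x_r \in X$ disjoint from $B$, carrying the inertia groups $J_1, \ldots, J_{r-1}$ at $x_1, \ldots, x_{r-1}$ and $J_r$ at the distinguished specialization point taken to be $x_0$ in the chosen labelling; the points of $B \setminus \{x_0\}$ retain their inertia $I_x$ inherited from $f$. The main delicate step of the plan is the local compatibility at the node in the middle paragraph — matching the $\langle a \rangle$-germ from $f$ on one side with the $\langle a^{-1} \rangle$-germ from $g$ on the other side via a single tame cyclic Kummer extension of $k[[x,y]]$ — which is precisely the content already encoded in Lemma \ref{lem_Raynaud_gen}, and which is the reason the prime-to-$p$ order hypothesis on $a$ is indispensable.
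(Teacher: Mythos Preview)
The paper does not give its own proof of Theorem~\ref{thm_patching_covers}; it merely recalls and restates Harbater's result from \cite{Ha_two_patch}, so there is no in-paper argument to compare against. Your sketch is, however, precisely the outline of the original formal patching proof: build a two-component degeneration $X \cup \bb{P}^1$ with a single node at $x_0 \leftrightarrow \eta_0$, place $\tx{Ind}_{G_1}^G f$ and $\tx{Ind}_{G_2}^G g$ on the two components, glue across the node by the tame cyclic Kummer cover of $\tx{Spec}(k[[x,y]])$, invoke connectedness from $G = \langle G_1, G_2 \rangle$ with $I \subset G_1 \cap G_2$, and specialize. That is the correct strategy and the correct ingredients.

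Two small points of care. First, your appeal to Lemma~\ref{lem_Raynaud_gen} for the node compatibility is slightly misplaced: that lemma produces a one-parameter family interpolating two given local extensions, whereas what you actually need at the node is the elementary fact that a single tame $\bb{Z}/m$-Kummer cover of $\tx{Spec}(k[[x,y]])$ restricts to the prescribed $\langle a \rangle$- and $\langle a^{-1} \rangle$-extensions on the two formal branches; no deformation is required here because the tame extension is unique. Second, your handling of the branch-point labelling is a bit loose: in the generic fibre $X_K$ the $r$ points coming from $\eta_1,\ldots,\eta_r$ all specialize to $x_0$ in the closed fibre, and the assertion that after descending to $k$ one of them can be taken to equal $x_0$ (with inertia $J_r$) while the remaining $r-1$ land at new points $x_1,\ldots,x_{r-1}$ requires a slightly more careful choice of model and section than ``spread out in the deformation'' suggests; this is standard but should be stated rather than absorbed into the citation of \cite[Corollary~2.7]{Ha_AC}.
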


In \cite{AC_embed_Harbater} a special case of the following result appeared which also solves the split quasi-$p$ embedding problem.

\begin{theorem}\label{thm_embedding}
Let $G$ be a finite group, $\psi \colon Y \to X$ be a smooth connected $G$-Galois cover. Let $x_0 \in X$ be a closed point. Let $\Gamma$ be a finite group generated by $G$ and a quasi $p$-group $H$ such that there is a $p$-subgroup $P$ of $H$ which is normalized by $G$ and such that the pair $(H,P)$ is realizable. Then there is a $\Gamma$-Galois cover $\phi \colon Z \to X$ of smooth projective connected $k$-curves dominating the cover $\psi$ such that the following hold.
\begin{enumerate}
\item For a closed point $x \neq x_0$ if $I_x$ occurs as an inertia group at a point over $x$ for the cover $\psi$, then $I_x$ also occurs as an inertia group at a point over $x$ for the cover $\phi$;
\item if $I_0$ occurs as an inertia group at a point above $x_0$ for the cover $\psi$, $I_0 P$ occurs as an inertia group at a point above $x_0$ for the cover $\phi$;
\item the covers $Z/ \langle H^\Gamma \rangle \to X$ and $Y/(G \cap \langle H^\Gamma \rangle) \to X$ are isomorphic as $\Gamma/ \langle H^\Gamma \rangle$-Galois covers of $X$.
\end{enumerate}
\end{theorem}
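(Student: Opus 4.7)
The strategy is formal patching in the spirit of the proof of Theorem \ref{thm_Raynaud_gen}, combining the given $G$-Galois cover $\psi$ with a realization of the pair $(H, P)$ over $\bb{P}^1$. First, I would use the realizability of $(H, P)$ to produce a connected $H$-Galois cover $g \colon W \to \bb{P}^1$ \'{e}tale away from $\infty$ with $P$ as an inertia group above $\infty$, and fix a point $w_\infty \in W$ above $\infty$, so that $\widehat{\cal{O}}_{W, w_\infty}/k[[x]]$ is a $P$-Galois extension. Fix also a point $y_0 \in Y$ above $x_0$, so that $\widehat{\cal{O}}_{Y, y_0}/\widehat{\cal{O}}_{X, x_0}$ is an $I_0$-Galois extension.

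Since $P$ is normalized by $G$, hence by $I_0 \subset G$, the product $I_0 P$ is a subgroup of $\Gamma$ in which $P$ is normal, and $I_0 P$ is itself an extension of the cyclic group $I_0/p(I_0)$ of order $m$ (say) by the $p$-group $p(I_0) P$. The second step is to construct a local $I_0 P$-Galois extension of $\widehat{\cal{O}}_{X, x_0}$ whose tame quotient matches that of the $I_0$-extension coming from $\psi$ and which locally realizes the $P$-Galois extension coming from $g$ on the subextension fixed by the tame part. Such an extension is furnished by a Harbater--Katz--Gabber cover of $\bb{P}^1$ with Galois group $I_0 P$ branched only at $\infty$; the flexibility in the HKG construction lets us match both the $I_0$- and $P$-substructures, using that $(H, P)$ is realizable and that the conjugation action of the tame part of $I_0$ on $P$ is fixed inside $\Gamma$.

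For the global step, I would form a regular projective $R$-curve $T^* \to S = \tx{Spec}(R)$ with $R = k[[t]]$, whose generic fiber is $X_K$ and whose closed fiber is $X \cup \bb{P}^1$ meeting at a single point $\eta$ sitting above $x_0$, with $\widehat{\cal{O}}_{T^*, \eta} \cong k[[x, y]]/(xy - t)$. Following the pattern of the proofs of Theorem \ref{thm_Raynaud_gen} and Lemma \ref{lem_common_cover_for_product} (which rest on \cite[Proposition 2.3]{Ha_AC}), I would patch the $\Ind_G^\Gamma$-cover induced by $\psi$ on the $X$-component with the $\Ind_H^\Gamma$-cover induced by $g$ on the $\bb{P}^1$-component, gluing along the $\Ind_{I_0 P}^\Gamma$-cover of the formal neighborhood of $\eta$ produced in Step 2. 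This yields a connected $\Gamma$-Galois cover of $T^*$; its generic fiber, after normalization, is the desired cover $\phi \colon Z \to X$, and the generation hypothesis $\Gamma = \langle G, H \rangle$ together with the connectedness criterion in formal patching guarantees that the Galois group is exactly $\Gamma$. Properties (1) and (2) follow by tracking the inertia groups through the patching, and property (3) follows because on quotienting by $\langle H^\Gamma \rangle$ the contribution of the patched-in $H$-cover is killed, so $Z/\langle H^\Gamma \rangle \to X$ is determined by $\psi$ and must coincide with $Y/(G \cap \langle H^\Gamma \rangle) \to X$.

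The main obstacle is Step 2: producing the common local $I_0 P$-Galois extension whose $P$-part matches the one from $g$ at $w_\infty$ and whose tame quotient matches that of the $I_0$-extension of $\psi$ at $y_0$, compatibly with the group structure of $I_0 P \subset \Gamma$. The normality of $P$ in $I_0 P$, guaranteed by the hypothesis that $G$ normalizes $P$, is essential; without it $I_0 P$ would not even be a well-defined subgroup. A secondary subtlety is to ensure that the formal patch is connected with Galois group exactly $\Gamma$, which uses $\Gamma = \langle G, H \rangle$ together with the standard irreducibility/connectedness criteria from \cite[Proposition 2.3]{Ha_AC}.
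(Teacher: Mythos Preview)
The paper's own proof is a two-sentence citation: it invokes \cite[Theorem~2.1, Theorem~4.1]{AC_embed_Harbater}, where Harbater proves the result under the stronger hypothesis that $P$ is a Sylow $p$-subgroup of $H$ normalized by $G$, and simply observes that Harbater's argument goes through verbatim once one replaces ``$P$ is Sylow in $H$'' by ``the pair $(H,P)$ is realizable''. Your proposal instead sketches the formal patching proof from scratch. The underlying mathematics is the same---Harbater's argument in \cite{AC_embed_Harbater} proceeds by precisely the degeneration $X \rightsquigarrow X \cup \bb{P}^1$ and the patch you describe---so your outline is correct in spirit and, if executed, would reprove the cited theorem in the needed generality. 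The paper's route is much shorter only because the work has already been done elsewhere; yours is self-contained.

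One point in your Step~2 deserves correction. You write that the common local $I_0P$-extension is ``furnished by a Harbater--Katz--Gabber cover of $\bb{P}^1$ with Galois group $I_0P$ branched only at~$\infty$''. First, an HKG cover with a nontrivial tame quotient is branched at two points, not one. Second, and more to the point, the HKG construction goes the other way: it \emph{globalizes} a given local extension, it does not produce one. What you actually need here is Harbater's embedding-problem machinery (e.g.\ \cite[Theorem~3.11]{Ha_extn} and the constructions of \cite[\S3]{AC_embed_Harbater}): starting from the $I_0$-local extension at $y_0$ and the $P$-local extension at $w_\infty$, and using that $I_0$ normalizes $P$ in $\Gamma$, one solves the split embedding problem $I_0P \twoheadrightarrow I_0$ locally to obtain an $I_0P$-extension of $\widehat{\cal{O}}_{X,x_0}$ whose $I_0$-quotient is the given one and whose restriction to the $P$-part can be arranged (after a deformation argument as in Lemma~\ref{lem_Raynaud_gen}) to match that coming from $g$. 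With this adjustment, your Steps 1--3 are exactly the content of Harbater's proof.
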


\begin{proof}
By \cite[Theorem 2.1, Theorem 4.1]{AC_embed_Harbater} the above conclusion holds when $P$ is replaced by a Sylow $p$-subgroup of $H$ which is normalized by $G$. But the same proof works under our hypothesis with the additional assumption that there is a connected $H$-Galois \'{e}tale cover of the affine line such that $P$ occurs as an inertia group above $\infty$.
\end{proof}

\section{Inertia Conjecture for the Alternating groups}\label{sec_alt_IC}
In this section we prove the Inertia Conjecture (Conjecture \ref{IC}) for some Alternating groups. Recall that the IC was proved to be true for $A_p$ (\cite[Theorem 1.2]{BP}) and when $p \equiv 2 \pmod{3}$ for $A_{p+2}$(\cite[Theorem 1.2]{8}). We show that when $p \equiv 2  \pmod{3}$ the IC is true for the groups $A_{p+1}$, $A_{p+3}$ and $A_{p+4}$, and with some extra condition on $p$ the IC is also true for $A_{p+5}$. The covers will be constructed using the results and techniques from Section \cref{sec_explicit_eq} and Corollary \ref{cor_Raynaud_gen} from the previous section. Throughout this section $\tau$ denotes the $p$-cycle $(1,\cdots,p)$ in $S_p$.

In view of Equation (\ref{eq_theta}), to prove the IC for $A_d$, $p < d < 2p$, we need to prove that there is a connected $A_d$-Galois \'{e}tale cover of the affine line such that $I = \langle \tau \rangle \rtimes \langle \theta^i \omega \rangle$ occurs as an inertia group above $\infty$ for every $1 \leq i \leq p-1$ and $\omega \in \tx{Sym}(\{p+1,\cdots,d\})$ such that $\theta^i \omega$ is an even permutation. Note that for $\theta^i \omega \in A_d$, $i$ an even integer if and only if $\omega$ is an even permutation. Now we make some observations which reduces the proof of the IC to the realization of the pair $(A_d,I)$ to a fewer cases.

\begin{remark}\label{rmk_reduction_Alt}
Since $\langle \theta^i \omega \rangle = \langle \theta^{(i,p-1)} \omega \rangle$, it is enough to consider the $i$'s dividing $p-1$. Also using Abhyankar's Lemma (\cite[XIII, Proposition 5.2]{10}) it is enough to prove for the cases when $I$ is a maximal inertia group in the sense of \cite[Section 4.9]{survey_paper}.
\end{remark}

In fact the following result shows that it is enough to consider the more restricted cases when $\omega$ acts on the set $\{p+1,\cdots,d\}$ either with one fixed point or without any fixed point, provided the IC is true for the Alternating groups of lower degree.

\begin{lemma}\label{lem_Alt_FP}
Let $p$ be an odd prime, $p+1 \leq d \leq 2p-1$. Assume that the pair $(A_d,I)$ is realizable for a subgroup $I \subset A_d$ which fixes $\geq 1$ points in $\{1,\cdots,d\}$. Then for all $d' \geq d$, the pair $(A_{d'}, I)$ is also realizable.

In particular, to prove the IC for $A_d$, $p+2 \leq d \leq 2p-1$, it is enough to prove that the IC is true for each $A_{u}$, $p \leq u \leq d-2$, and that the pair $(A_d,I)$ is realizable for each $I \subset N_{A_{d}}(\langle \tau \rangle)$ such that $I$ fixes $0$ or $1$ point in the set $\{1,\cdots,d\}$.
\end{lemma}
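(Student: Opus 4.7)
The plan is to induct on $d' - d$, with base case $d' = d$ being the hypothesis. For the step from $d' - 1$ to $d'$, I will invoke the patching Corollary \ref{cor_Raynaud_gen} with $X = \bb{P}^1$, $B = \{\infty\}$ and $G = A_{d'}$: take $G_1 = A_{d'-1}$ as the natural stabilizer of the new point $d'$ in $A_{d'}$, together with the $(A_{d'-1}, I)$-cover from the inductive hypothesis so that $I_1 = I$ is the inertia above $\infty$; using the fixed-point hypothesis, choose $a \in \{1, \dots, d'-1\}$ fixed by $I$ and take $G_2 = \tx{Stab}_{A_{d'}}(a) \cong A_{d'-1}$. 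Then $I \subset G_1 \cap G_2$, the pair $(G_2, I)$ is realizable via the abstract isomorphism of pairs $(G_1, I) \cong (G_2, I)$ (implemented by conjugation by an even permutation moving $\{a, d'\}$ among the fixed points of $I$), and $\langle G_1, G_2 \rangle = A_{d'}$ because $A_{d'-1}$ is maximal in $A_{d'}$. Taking $I_1 = I_2 = I$ trivially satisfies the prime-to-$p$ compatibility condition, and the corollary delivers the desired cover. The fixed-point hypothesis persists automatically since each newly added point is fixed by $I$.

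\textbf{Second statement.} To deduce the IC for $A_d$, I stratify the valid $I \subset N_{A_d}(\langle \tau \rangle)$ by $k = f(I)$, the number of fixed points of $I$ in $\{1, \dots, d\}$; with $I = \langle \tau \rangle \rtimes \langle \theta^i \omega \rangle$ this coincides with the number of fixed points of $\omega$ in $\{p+1, \dots, d\}$. By Remark \ref{rmk_reduction_Alt} it is enough to treat maximal such $I$. The cases $k \in \{0, 1\}$ are precisely those covered by hypothesis (b). For $k \geq 3$, embed $I$ into $A_{d-k+1}$ by fixing $k - 1$ of its $k$ fixed points of $\{1, \dots, d\}$; then $I$ fixes exactly one point in the natural action of $A_{d-k+1}$, and since $p \leq d - k + 1 \leq d - 2$ lies in the range of hypothesis (a), IC for $A_{d-k+1}$ gives $(A_{d-k+1}, I)$. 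The first statement then lifts this to $(A_d, I)$.

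\textbf{The main obstacle} is the boundary case $k = 2$, for which $u = d - k + 1 = d - 1$ just exceeds the range in hypothesis (a). My plan here is to push the maximality reduction of Remark \ref{rmk_reduction_Alt} further: for every maximal valid $I$ with $f(I) = 2$, I will exhibit a valid $I' \supsetneq I$ in $N_{A_d}(\langle \tau \rangle)$ with $f(I') \leq 1$, constructed by letting $\omega'$ act nontrivially on the two fixed points $\{a, b\}$ of $\omega$ and choosing a compensating power $\theta^{i'}$ so that both the parity condition $\theta^{i'} \omega' \in A_d$ and the containment $\theta^i \omega \in \langle \theta^{i'} \omega' \rangle$ hold. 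Then hypothesis (b) realizes $(A_d, I')$ and an appropriate Kummer pullback via Abhyankar's Lemma descends to $(A_d, I)$. The key difficulty is verifying the existence of such an enlargement for each relevant maximal $I$ with $f(I) = 2$ in the target degrees $d \in \{p+4, p+5\}$ (the case $d = p+3$ is vacuous since a permutation of three points fixing two of them is the identity); this reduces to a finite combinatorial check on cycle types of $\omega$ in $\tx{Sym}(\{p+1, \dots, d\})$ modulo the $A_d$-parity constraint, and is where the main technical labor of the proof lies.
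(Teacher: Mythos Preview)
Your argument for the first assertion is correct and close to the paper's: both invoke Corollary~\ref{cor_Raynaud_gen}, you by induction with two copies of $A_{d'-1}$ at each step, the paper in one shot with $G_i=\Alt(\Supp(I)\cup S_i)$ ranging over all subsets $S_i\subset\{1,\dots,d'\}\setminus\Supp(I)$ of size $d-|\Supp(I)|$. (One small point: the conjugating permutation swapping $a$ and $d'$ need not be even---conjugation by the transposition $(a,d')$ already gives an isomorphism of abstract pairs $(G_1,I)\cong(G_2,I)$, and at the first step $d'=d+1$ only two fixed points of $I$ may be available.)

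Your handling of the second assertion has a genuine gap at $k=2$. You propose to show that every admissible $I$ with exactly two fixed points is strictly contained in some admissible $I'$ with $f(I')\le 1$, but this fails already for $d=p+4$. Take $I=\langle\tau\rangle\rtimes\langle\theta\,(p{+}1,p{+}2)\rangle$, which fixes $\{p{+}3,p{+}4\}$. Any $I'=\langle\tau\rangle\rtimes\langle\theta^{i'}\omega'\rangle\supseteq I$ forces $(\omega')^c=(p{+}1,p{+}2)$ for some $c$; in $\Sym(\{p{+}1,\dots,p{+}4\})\cong S_4$ the only element with a power equal to a given transposition is that transposition itself, so $\omega'=(p{+}1,p{+}2)$ and hence $f(I')=2$ again. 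Since moreover $\theta(p{+}1,p{+}2)$ already has order $p-1$, the maximum possible among $\theta^{b}(p{+}1,p{+}2)$ with $b$ odd, this $I$ is genuinely maximal. Thus the enlargement you need does not exist, and the ``finite combinatorial check'' cannot close the gap. The paper's own proof of the second statement is a single sentence; the route through the first statement for $k=2$ naturally starts from $(A_{d-1},I)$, where $I$ has exactly one fixed point, which uses the IC for $A_{d-1}$ rather than only the stated range $u\le d-2$.
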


\begin{proof}
This is a direct consequence of Corollary \ref{cor_Raynaud_gen} with $G_i = \tx{Alt}(\tx{Supp}(I) \cup S_i)$ for every set $S_i \subset \{1,\cdots, d'\} \setminus \tx{Supp}(I)$ of size $d - |\tx{Supp}(I)|$. The second statement follows from the first one.
\end{proof}

We are now ready to prove that IC for certain Alternating groups.

\begin{theorem}\label{thm_A_p+1}
Let $p \equiv 2 \pmod{3}$ be an odd prime. Then the IC is true for the group $A_{p+1}$.
\end{theorem}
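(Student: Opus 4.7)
My plan is to first narrow down the candidate inertia subgroups. Since the Sylow $p$-subgroup of $A_{p+1}$ is cyclic of order $p$, the $p$-part of every candidate inertia $I$ is $\langle\tau\rangle$ for a $p$-cycle $\tau$. Equation (\ref{eq_theta}) applied with $J = \Sym(\{p+1\})$ trivial and $\theta$ a $(p-1)$-cycle (hence odd as a permutation) gives $N_{A_{p+1}}(\langle\tau\rangle) = \langle\tau,\theta\rangle \cap A_{p+1} = \langle\tau,\theta^2\rangle$, so the candidates for $I$ are $\langle\tau,\theta^{2j}\rangle$ for $j \mid (p-1)/2$. By Abhyankar's Lemma (Remark \ref{rmk_reduction_Alt}) it suffices to realize the maximal one, $I_{\max} := \langle\tau,\theta^2\rangle$ of order $p(p-1)/2$.

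I then apply Proposition \ref{prop_ded_d_cover} with $d = p+1$, $t = 1$, $r = 1$, $m_1 = 1$, $s = 2$, and a pair $(n_1, n_2)$ with $n_1 + n_2 = p+1$, both coprime to $p$, $\gcd(n_1, n_2) = 1$, and (crucially) $\gcd(n_1 n_2, (p-1)/2) = 1$. The proposition gives a two-point $A_{p+1}$-Galois cover of $\bb{P}^1$ branched at $\{0,\infty\}$ with tame inertia of cyclic order $n_1 n_2$ over $0$ and inertia exactly $I_{\max}$ over $\infty$ (since $\omega$ is trivial and $\text{ord}(\theta^i) = (p-1)/\gcd(p-1,2) = (p-1)/2$). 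The Galois group is $A_{p+1}$ because $\gamma$ is even (as $\sum(n_i - 1) = p - 1$ is even) and, assuming $n_1 \geq n_2$, one has $n_1 \geq (p+1)/2 \geq 3$ and $n_2 \geq 2$ (since $n_2 = 1$ would force the excluded value $n_1 = p$), so $\gamma^{n_1}$ is a single non-trivial $n_2$-cycle fixing $n_1 \geq 3$ points, triggering the last assertion of Proposition \ref{prop_ded_d_cover}. A subsequent pullback by the $[n_1 n_2]$-Kummer cover kills the tame ramification over $0$ by Abhyankar's Lemma, and the coprimality $\gcd(n_1 n_2, (p-1)/2) = 1$ preserves the tame part of the inertia over $\infty$, so $I_{\max}$ still occurs there. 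The pullback remains connected and $A_{p+1}$-Galois because $A_{p+1}$ is simple non-abelian for $p \geq 5$ and shares no non-trivial common quotient with the abelian Kummer group, forcing linear disjointness over $k(x)$.

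The main obstacle is the combinatorial existence of an admissible pair $(n_1, n_2)$ for every odd prime $p \equiv 2 \pmod 3$. For most $p$ one can choose such a pair easily (e.g.\ $(5, 13)$ for $p = 17$), but the constraints can collide for small primes: for $p = 11$ every coprime decomposition of $12$ avoiding $p$ contains a factor of $(p-1)/2 = 5$. In such cases I would pass to a larger $s$ in Proposition \ref{prop_ded_d_cover}: the identity $(p-1, 6) = 2$ for $p \equiv 2 \pmod 3$ makes $s = 6$ with $(n_1, \ldots, n_6) = (1, 1, 1, 1, 1, p-4)$ a natural uniform alternative for $p \geq 7$, in which $\gamma$ becomes a single $(p-4)$-cycle fixing $5$ points and the Galois group is forced to be $A_{p+1}$ (ruling out the Mathieu and $\mathrm{PSL}_2$ candidates by order-divisibility). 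The genuinely exceptional case is $p = 5$, where no choice of parameters inside Proposition \ref{prop_ded_d_cover} with $t = 1$ directly yields $I_{\max}$; for that prime one must verify $G = A_6$ ad hoc for a choice such as $(n_1, n_2) = (3, 3)$ (excluding the transitive $\mathrm{PSL}_2(5) \subset A_6$ by a ramification computation incompatible with $|G| = 60$) and confirm that the Kummer pullback by $[3]$ is compatible with the tame part of order $2$ at $\infty$.
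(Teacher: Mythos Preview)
Your reduction to realizing $I_{\max} = \langle\tau,\theta^2\rangle$ is correct, and your $s=2$ argument via Proposition~\ref{prop_ded_d_cover} and Lemma~\ref{lem_Ass_holds}(2) is valid whenever an admissible pair $(n_1,n_2)$ exists. The difficulty, which you identify, is that no such pair exists for $p=5$ or $p=11$, and your proposed fallbacks are not complete. For the $s=6$ alternative with $(n_1,\dots,n_6)=(p-4,1,1,1,1,1)$ you never verify Assumption~\ref{ass_num}: this requires finding distinct nonzero $\alpha_1,\dots,\alpha_6$ satisfying the system~(\ref{eq_g(y)_r=1_coeff}), which is five nonlinear equations and is not covered by Lemma~\ref{lem_Ass_holds}. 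For $p=5$ with $(n_1,n_2)=(3,3)$, excluding $G \cong PSL_2(5)$ is not immediate: in its degree-$6$ action $PSL_2(5)$ \emph{does} contain elements of cycle type $(3,3)$ (order-$3$ elements are anisotropic and hence fixed-point-free on $\bb{P}^1(\bb{F}_5)$), so the Riemann--Hurwitz obstruction you allude to would have to be made precise.

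The paper avoids these case distinctions by a different device: it takes $s=3$, $(n_1,n_2,n_3)=(p-2,2,1)$ (covered by Lemma~\ref{lem_Ass_holds}(4)), so that $\gamma$ has an odd power equal to the transposition $(p-1,p)$, forcing $G=S_{p+1}$ rather than $A_{p+1}$; the inertia at $\infty$ is then the full $\langle\tau,\theta\rangle$. A single $[2(p-2)]$-Kummer pullback simultaneously kills the inertia over $0$, drops the Galois group from $S_{p+1}$ to $A_{p+1}$ (the covers share the $\bb{Z}/2$-subcover, and the quotient étale over $\bb{A}^1$ cannot be $S_{p+1}$-Galois), and cuts the tame part at $\infty$ from $\langle\theta\rangle$ to $\langle\theta^2\rangle$ since $\gcd(2(p-2),p-1)=2$. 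This works uniformly for every odd prime $p\equiv 2\pmod 3$ with no exceptional cases.
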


\begin{proof}
By Remark \ref{rmk_reduction_Alt}, it is enough to prove that the pair $(A_{p+1},I)$ is realizable for $I = \langle \tau \rangle \rtimes \langle \theta^2 \rangle$. Set $s=3$, $r=1$, $n_1 = p-2$, $n_2 = 2$, $n_3 = 1$. By Lemma \ref{lem_Ass_holds}(4), Assumption \ref{ass_num} holds for the choice $(\alpha_1,\alpha_2,\alpha_3,\beta_1) = (3/4,1/4,1,0)$. Let $\psi \colon Y \to \bb{P}^1$ be the degree-$(p+1)$ cover given by the affine equation
\begin{equation}\label{eq_p+1}
\Pi_{i=1}^3 (y - \alpha_i)^{n_i} - xy = 0.
\end{equation}
Let $\phi \colon Z \to \bb{P}^1$ be the Galois closure of the cover $\psi$ with Galois group $G$. Then by Proposition \ref{prop_ded_d_cover}, $G$ is a primitive subgroup of $S_{p+1}$ and the cover $\phi$ is \'{e}tale away from $\{0, \infty\}$ such that $\langle (1,\cdots,p-2)(p-1,p) \rangle$ occurs as an inertia group over $0$ and $\langle \tau \rangle \rtimes \langle \theta \rangle$ occurs as an inertia group over $\infty$. Since $p-2$ is odd, $G$ contains the transposition $(p-1,p)$ and since $G$ also contains the $p$-cycle $\tau$, by \cite[Lemma 4.4.3]{TGT}, $G = S_{p+1}$. After the pullback of $\phi$ under the $[2(p-2)]$-Kummer cover we obtain a connected $A_{p+1}$-Galois \'{e}tale cover of the affine line such that $I$ occurs as an inertia group above $\infty$.
\end{proof}

\begin{theorem}\label{thm_A_p+3}
Let $p \equiv 2 \pmod{3}$ be an odd prime. Then the IC is true for the group $A_{p+3}$.
\end{theorem}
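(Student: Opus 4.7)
The plan is to combine Lemma~\ref{lem_Alt_FP} with Corollaries~\ref{cor_t_odd_21case} and \ref{cor_t_odd_22case}. First, by the known IC for $A_p$ (\cite[Theorem 1.2]{BP}) and for $A_{p+1}$ (Theorem~\ref{thm_A_p+1}), Lemma~\ref{lem_Alt_FP} reduces the task to realizing $(A_{p+3},I)$ for every subgroup $I\subset N_{A_{p+3}}(\langle\tau\rangle)$ fixing at most one point of $\{1,\dots,p+3\}$, where $\tau=(1,\dots,p)$. By Remark~\ref{rmk_reduction_Alt} (Abhyankar's Lemma) it is enough to handle the \emph{maximal} such $I$.

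Write $I=\langle\tau\rangle\rtimes\langle\theta^i\omega\rangle$ with $\omega\in\Sym(\{p+1,p+2,p+3\})$. Since $\theta$ acts as a $(p-1)$-cycle on $\Supp(\tau)$ and $p$ is odd, $\theta$ is an odd permutation, so the requirement $\theta^i\omega\in A_{p+3}$ couples the parity of $i$ to that of $\omega$. The number of fixed points of $I$ in $\{1,\dots,p+3\}$ equals the number of fixed points of $\omega$ in $\{p+1,p+2,p+3\}$, which is $0$, $1$, or $3$; the last case is already covered by the IC for $A_p$ through Lemma~\ref{lem_Alt_FP}. After taking maximal inertia groups only two cases survive: (a) $\omega$ a $3$-cycle (even, no fixed points), forcing $i$ even with maximal choice $i=2$, so that $I=\langle\tau\rangle\rtimes\langle\theta^2(p+1,p+2,p+3)\rangle$; and (b) $\omega$ a transposition (odd, one fixed point), forcing $i$ odd with maximal choice $i=1$, so that $I=\langle\tau\rangle\rtimes\langle\theta(p+1,p+2)\rangle$.

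Case (a) is delivered directly by Corollary~\ref{cor_t_odd_21case} with $t=3$, whose hypothesis $3\leq t\leq p-2$ holds for every $p\geq 5$. For case (b) I apply Corollary~\ref{cor_t_odd_22case} with $t=3$, which realizes an inertia group whose tame generator is $\theta^{\gcd(3,p-1)}(p+1,p+2)$; this exponent collapses to $1$ precisely because $p\equiv 2\pmod{3}$ forces $p-1\equiv 1\pmod{3}$, so that $\gcd(3,p-1)=1$. The main (in fact, only) subtle point of the argument is this parity-plus-congruence bookkeeping: one needs to check that the explicit covers of Section~\ref{sec_explicit_eq} land in $A_{p+3}$ rather than $S_{p+3}$, and that they deliver exactly the generators $\theta^2\omega$ in case (a) and $\theta\omega$ in case (b) with no residual Kummer exponent. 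It is precisely the congruence $p\equiv 2\pmod 3$ that removes the obstruction in case (b); without it, Corollary~\ref{cor_t_odd_22case} would give $\theta^{\gcd(3,p-1)}$ with $\gcd(3,p-1)=3$, producing a strictly smaller tame part and leaving the maximal case unresolved.
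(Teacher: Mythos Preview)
Your proof is correct and follows essentially the same route as the paper: reduce to the two maximal inertia groups $\langle\tau\rangle\rtimes\langle\theta^2(p+1,p+2,p+3)\rangle$ and $\langle\tau\rangle\rtimes\langle\theta(p+1,p+2)\rangle$, then dispatch these by Corollaries~\ref{cor_t_odd_21case} and~\ref{cor_t_odd_22case} with $t=3$. The only cosmetic difference is that the paper handles the case $\omega=\mathrm{id}$ directly via Abhyankar's Lemma (since $\langle\theta^2\rangle=\langle\theta^6\rangle\subset\langle\theta^2(p+1,p+2,p+3)\rangle$ when $\gcd(3,p-1)=1$), whereas you route it through Lemma~\ref{lem_Alt_FP} and the IC for $A_p$; your invocation of the IC for $A_{p+1}$ is harmless but in fact unneeded here, as no $\omega\in\Sym(\{p+1,p+2,p+3\})$ fixes exactly two points.
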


\begin{proof}
When $p \equiv 2 \pmod{3}$, by Abhyankar's Lemma it is enough to prove that there is a connected $A_{p+3}$-Galois cover of $\bb{P}^1$ \'{e}tale away from $\infty$ such that $I = \langle \tau \rangle \rtimes \langle \beta \rangle$ occurs as an inertia group at a point above $\infty$ where $\beta$ is of the form $\beta = \theta^2 (p+1, p+2, p+3)$ or $\beta = \theta (p+1, p+2)$. These are immediate from Corollary \ref{cor_t_odd_21case} and Corollary \ref{cor_t_odd_22case}.
\end{proof}

\begin{theorem}\label{thm_A_p+4}
Let $p \equiv 2 \pmod{3}$ be an odd prime. Then the IC is true for the group $A_{p+4}$.
\end{theorem}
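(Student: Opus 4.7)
The plan is to follow the strategy of Theorems \ref{thm_A_p+1} and \ref{thm_A_p+3}: use Lemma \ref{lem_Alt_FP} to reduce to realizing a short list of inertia groups over $\infty$, and then construct these by the methods of Section \ref{sec_explicit_eq}.

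Using that the IC is now established for $A_u$, $p \leq u \leq p+3$, Lemma \ref{lem_Alt_FP} together with Remark \ref{rmk_reduction_Alt} reduces the theorem to showing that for every maximal inertia subgroup $I = \langle \tau \rangle \rtimes \langle \theta^i \omega \rangle$ of $N_{A_{p+4}}(\langle \tau \rangle)$ for which $\omega \in \Sym(\{p+1, p+2, p+3, p+4\})$ has at most one fixed point, the pair $(A_{p+4}, I)$ is realizable. Up to conjugacy there are three possibilities for the cycle type of $\omega$: (a) a single $3$-cycle, (b) a double transposition, (c) a $4$-cycle. Since $\theta$ acts on $\{1,\ldots,p\}$ as a $(p-1)$-cycle and is therefore odd, the condition $I \subset A_{p+4}$ forces the exponent $i$ to be even in cases (a), (b) and odd in case (c).

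Each case is to be realized by an application of Proposition \ref{prop_ded_d_cover} with $d = p+4$, $t = 4$, followed by a Kummer pullback to obtain an \'etale cover of the affine line. For case (a) Corollary \ref{cor_t_odd_22case} with $t=4$ realizes $\langle \tau \rangle \rtimes \langle \theta^{(4,p-1)} (p+1,p+2,p+3) \rangle$; when $p \equiv 3 \pmod 4$ this is already the maximal inertia group, and for $p \equiv 1 \pmod 4$ I would instead take $r=2$, $m_1=3$, $m_2=1$ in Proposition \ref{prop_ded_d_cover} with $s$ chosen so that $\gcd(p-1, s+1) = 2$, for instance $s=5$ (since $\gcd(p-1,6)=2$ follows from $p \equiv 2 \pmod 3$), and verify Assumption \ref{ass_num} directly for an explicit choice of $(\alpha_i, \beta_l)$. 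For case (b), Lemma \ref{lem_Ass_holds}(3) supplies $s=1$, $r=2$, $m_1=m_2=2$, $(\alpha_1,\beta_1,\beta_2) = (0,1,-1)$, and the resulting $\gamma$ is a $(p+4)$-cycle of even parity, so $G = A_{p+4}$ directly. For case (c), Lemma \ref{lem_Ass_holds}(4) supplies $s=3$, $r=1$, $(n_1,n_2,n_3) = (p-2,2,4)$, $m_1=4$; the cycle type $(p-2,2,4)$ of $\gamma$ is an even permutation, so $G = A_{p+4}$ again.

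The main obstacle is that the Kummer pullback required to kill the tame ramification at $0$ also shrinks the cyclic tame part of the inertia at $\infty$ by a factor equal to the gcd of its order with the degree of the Kummer cover. For each case and each congruence class of $p$ one must choose $(s,r,n_i,m_l)$ so that this gcd is small enough to preserve the maximal inertia group at $\infty$. When the choices in Lemma \ref{lem_Ass_holds} do not suffice, one either verifies Assumption \ref{ass_num} directly for a refined $(n_i)$, or patches two explicit covers via Corollary \ref{cor_Raynaud_gen}; in particular the latter approach combines the case (c) construction with the known $A_p$-realization of $\langle \tau \rangle \rtimes \langle \theta \rangle$ when the Kummer pullback in (c) shrinks the tame order at $\infty$ below the maximum.
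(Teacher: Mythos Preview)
Your reduction via Lemma~\ref{lem_Alt_FP} is valid and indeed slightly sharper than the paper's (the paper retains the single-transposition case $\beta=\theta(p+1,p+2)$ among its three maximal types, whereas you correctly discard it since it fixes two points). However, your handling of case~(c) contains a genuine gap that is not repaired by the proposed fallback.

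With $s=3$, $r=1$, $(n_1,n_2,n_3)=(p-2,2,4)$, $m_1=4$ from Lemma~\ref{lem_Ass_holds}(4), the tame generator $\gamma$ at $0$ has order $\mathrm{lcm}(p-2,2,4)=4(p-2)$. Any Kummer pullback killing the ramification at $0$ has degree divisible by $4$, and then the tame generator at $\infty$ becomes $(\theta\,\omega_4)^{4(p-2)}=\theta^{-12}$: the $4$-cycle $\omega_4$ is annihilated, so you never realize $I=\langle\tau\rangle\rtimes\langle\theta\,\omega_4\rangle$. The paper avoids this by taking $(n_1,n_2,n_3)=(p-2,3,3)$ (checking Assumption~\ref{ass_num} by hand, since this choice is not covered by Lemma~\ref{lem_Ass_holds}); because $p\equiv 2\pmod 3$ one has $3\mid(p-2)$, so $\mathrm{ord}(\gamma)=p-2$, which is odd and coprime to both $4$ and $p-1$, and the $[p-2]$-Kummer pullback preserves the full inertia at $\infty$. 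The essential constraint you are missing is that the $n_i$ must be chosen so that $\mathrm{ord}(\gamma)$ is coprime to $\mathrm{ord}(\omega)$, not merely so that $\gamma$ has the correct parity.

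Your proposed fallback via Corollary~\ref{cor_Raynaud_gen} does not work as stated: that result requires each auxiliary $I_i\subset I$ to have tame quotient of the \emph{same} order $m=\mathrm{lcm}(p-1,4)$, but an inertia group contained in $A_p$ has tame order dividing $(p-1)/2$ (note $\theta\notin A_p$), and the inertia salvaged from your broken construction after pullback is $\langle\tau\rangle\rtimes\langle\theta^{12}\rangle$, whose tame order is $(p-1)/\gcd(12,p-1)\neq m$ in general. A similar coprimality failure affects your case~(b) when $5\mid(p-1)$, e.g.\ $p=11$: the $[p+4]$-pullback sends $\theta^2$ to $\theta^{10}$, which there is trivial. (Case~(b) is in fact redundant---the double transposition is $(\theta\,\omega_4)^2$---but since your (c) fails this does not help.) Your case~(a) for $p\equiv 1\pmod 4$ is also only a sketch: taking $s=5$ requires verifying Assumption~\ref{ass_num} for five $\alpha_i$'s, which you have not done; the paper instead uses $s=r=2$ with $(n_1,n_2,m_1,m_2)=(p+2,2,3,1)$, obtaining an $S_{p+4}$-cover and passing to $A_{p+4}$ via a $[2(p+2)]$-Kummer pullback.
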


\begin{proof}
Let $p \equiv 2 \pmod{3}$ be an odd prime. By Abhyankar's Lemma, to prove the IC for $A_{p+4}$ it is enough to prove that there is a connected $A_{p+4}$-Galois cover of $\bb{P}^1$ \'{e}tale away from $\infty$ such that $I = \langle \tau \rangle \rtimes \langle \beta \rangle$ occurs as an inertia group at a point above $\infty$ where $\beta$ is of the form $\beta = \theta (p+1, p+2)$ or $\beta = \theta^2 (p+1, p+2, p+3)$ or $\beta = \theta (p+1, p+2, p+3, p+4)$.

Since the IC is true for $A_{p+2}$, by Lemma \ref{lem_Alt_FP}, the pair $(A_{p+4},\langle \tau \rangle \rtimes \langle \theta (p+1,p+2) \rangle)$ is realizable.

Now fix an element $w_3 \in k$ such that $w_3^2 = 3$. Then for $s=2$, $r=2$, $n_1 = p+2$, $n_2 = 2$, $m_1 = 3$, $m_2 = 1$, Assumption \ref{ass_num} holds for the choice $(\alpha_1,\alpha_2,\beta_1,\beta_2) = (\frac{1+w_3}{4}, \frac{1- w_3}{4}, 0, 1)$. So we can apply Proposition \ref{prop_ded_d_cover} to obtain a connected $S_{p+4}$-Galois cover of $\bb{P}^1$ \'{e}tale away from $\{0,\infty\}$ such that $\langle (1,\cdots,p+2)(p+3,p+4)\rangle$ occurs as an inertia group above $0$ and $\langle \tau \rangle \rtimes \langle \theta (p+1,p+2,p+3) \rangle$ occurs as an inertia group above $\infty$. After a $[2(p+2)]$-Kummer pullback we obtain a connected $A_{p+4}$-Galois \'{e}tale cover of the affine line such that $\langle \tau \rangle \rtimes \langle \theta^2 (p+1,p+2,p+3) \rangle$ occurs as an inertia group above $\infty$.

For the last case fix an element $w_2 \in k$ such that $w_2^2 = 2$. Then for $s=3$, $r=1$, $n_1 = p-2$, $n_2 = n_3 =3$,  Assumption \ref{ass_num} holds for the choice $(\alpha_1,\alpha_2,\alpha_3,\beta_1) = (1, \frac{1+w_2}{3}, \frac{1-w_2}{3}, 0)$. By Proposition \ref{prop_ded_d_cover}, there is a connected $A_{p+4}$-Galois cover of $\bb{P}^1$ \'{e}tale away from $\{0,\infty\}$ such that $\langle (1,\cdots,p-2)(p-1,p,p+1)(p+2,p+3,p+4)\rangle$ occurs as an inertia group above $0$ and $\langle \tau \rangle \rtimes \langle \theta (p+1,p+2,p+3,p+4) \rangle$ occurs as an inertia group above $\infty$. So by Abhyankar's Lemma, the pair $(A_{p+4}, \langle \tau \rangle \rtimes \langle \theta (p+1,p+2,p+3,p+4) \rangle)$ is realizable.
\end{proof}

\begin{lemma}\label{lem_A_p+5}
When $p \equiv 2 \pmod{3}$ is a prime $>5$, the pair $(A_{p+5},I_i)$ is realizable for $2 \leq i \leq 5$, where $I_2 = \langle \tau \rangle \rtimes \langle \theta (p+1, p+2) \rangle$, $I_3 = \langle \tau \rangle \rtimes \langle \theta^2 (p+1, p+2, p+3) \rangle$, $I_4 = \langle \tau \rangle \rtimes \langle \theta (p+1, p+2, p+3, p+4) \rangle$, $I_5 = \langle \tau \rangle \rtimes \langle \theta^2 (p+1, p+2, p+3, p+4, p+5) \rangle$. Additionally if $4 \nmid (p+1)$, the pair $(A_{p+5},\langle \tau \rangle \rtimes \langle \theta (p+1, p+2)( p+3, p+4, p+5) \rangle )$ is also realizable.
\end{lemma}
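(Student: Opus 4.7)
The plan is to treat each of the listed pairs separately, combining the explicit cover constructions of Section~\ref{sec_explicit_eq} with the formal patching results of Section~\ref{sec_fp}.

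For $I_2$ and $I_3$, I would bootstrap from the Inertia Conjecture for smaller alternating groups. Since $I_2$ has support $\{1,\dots,p+2\}$, it sits inside the point-stabilizer copy of $A_{p+3}$ in $A_{p+5}$, fixing $p+3$; Theorem~\ref{thm_A_p+3} gives realizability of $(A_{p+3}, I_2)$, and Lemma~\ref{lem_Alt_FP} then lifts this to $(A_{p+5}, I_2)$. The argument for $I_3$ is identical with $A_{p+4}$ and Theorem~\ref{thm_A_p+4} replacing $A_{p+3}$ and Theorem~\ref{thm_A_p+3}. For $I_5$, I would directly invoke Corollary~\ref{cor_t_odd_21case} with $t = 5$, whose hypotheses $3 \leq t \leq p-2$ hold as $p > 5$.

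The $I_4$ case is the most delicate because its support is $\{1,\dots,p+4\}$ and it fixes only $p+5$, so the realization in $A_{p+4}$ (from Theorem~\ref{thm_A_p+4}) cannot be promoted via Lemma~\ref{lem_Alt_FP}. I would apply Proposition~\ref{prop_ded_d_cover} with $r = 2$ and $(m_1,m_2) = (4,1)$, so that the $\omega$-component at $\infty$ has cycle type $(4,1)$ matching $\omega_0 := (p+1,p+2,p+3,p+4)$. For the basic parameter choice $s = 2$ and $(n_1,n_2) = (p+4,1)$, Lemma~\ref{lem_Ass_holds}(5) verifies Assumption~\ref{ass_num}; the $(p+4)$-cycle is even, so the Galois group is $A_{p+5}$; and $r+s-1 = 3$ together with $p \equiv 2 \pmod 3$ forces $\ord(\theta^a) = p-1$. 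The inertia at $\infty$ is then $\langle \tau \rangle \rtimes \langle \theta^a \omega_0 \rangle$ for some $a$ coprime to $p-1$; since the resulting tame generator has cycle type $(p-1,4)$, which (having even parts) forms a single conjugacy class in $A_{p+5}$, this subgroup is $A_{p+5}$-conjugate to the tame part of $I_4$, which is what is required for realizability. Applying the $[p+4]$-Kummer pullback (Abhyankar's Lemma) removes ramification at $0$; since $p+4$ is odd and $\gcd(p+4, p-1) = \gcd(5, p-1)$, one computes $\gcd(p+4, \tx{l.c.m.}\{p-1, 4\}) = \gcd(5, p-1)$, so the tame part above $\infty$ is preserved exactly when $5 \nmid p-1$, yielding $I_4$. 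In the remaining subcase $5 \mid p-1$ (i.e.\ $p \equiv 11 \pmod{15}$), the Kummer pullback shrinks the tame part, so I would instead take $(s,r) = (s_0, 2)$ with $s_0+1$ coprime to both $p-1$ and $5$ (such $s_0$ always exists, typically $s_0 = 6$ with $s_0+1 = 7$), choose the $n_i$ summing to $p+5$ with each $n_i$ odd, coprime to $p$ and to $5$, and construct the roots $\alpha_i, \beta_l \in k$ explicitly to verify Assumption~\ref{ass_num} by direct polynomial computation.

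For the additional pair under $4 \nmid p+1$, the element $\omega' = (p+1,p+2)(p+3,p+4,p+5)$ has cycle type $(2,3)$; I would apply Proposition~\ref{prop_ded_d_cover} with $s = r = 2$, $(m_1,m_2) = (2,3)$, and $(n_1,n_2)$ satisfying Lemma~\ref{lem_Ass_holds}(5), followed by a suitable Kummer pullback. The hypothesis $4 \nmid p+1$ provides the numerical coprimality needed so that the tame part at $\infty$ after pullback remains $\langle \theta\, \omega' \rangle$, and the parity bookkeeping ($\theta$ odd, the $2$-cycle odd, the $3$-cycle even) keeps the Galois group inside $A_{p+5}$. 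The hardest single step overall is the explicit construction of $(\alpha_i,\beta_l)$ verifying Assumption~\ref{ass_num} in the $5 \mid p-1$ subcase of $I_4$, since those parameter values fall outside the cases catalogued in Lemma~\ref{lem_Ass_holds}.
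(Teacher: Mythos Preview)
Your treatment of $I_2$ and $I_3$ via Lemma~\ref{lem_Alt_FP} is fine and matches the paper (the paper uses $A_{p+2}$ and $A_{p+3}$ rather than $A_{p+3}$ and $A_{p+4}$, but either works). The remaining three cases, however, all have genuine gaps tied to the same phenomenon: your Kummer pullback degree fails to be coprime to the order of the tame part at~$\infty$.

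For $I_5$, Corollary~\ref{cor_t_odd_21case} cannot be invoked blindly: its proof uses that $\gcd(p+t-1,p-1)=\gcd(t,p-1)=1$, which for $t=5$ fails precisely when $5\mid p-1$. The paper splits on this and, in the subcase $5\mid p-1$, builds a separate cover with $(s,r)=(3,1)$, $(n_1,n_2,n_3)=(p-2,6,1)$ and explicit $(\alpha_1,\alpha_2,\alpha_3,\beta_1)$ verifying Assumption~\ref{ass_num}. For $I_4$, your choice $(n_1,n_2)=(p+4,1)$ forces a $[p+4]$-pullback with $\gcd(p+4,p-1)=\gcd(5,p-1)$; when $5\mid p-1$ your fallback plan (some $s_0$ with $s_0+1$ coprime to $p-1$) leaves the crucial verification of Assumption~\ref{ass_num} undone, and that verification is exactly the hard step. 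The paper sidesteps the case split entirely by taking $(n_1,n_2,m_1,m_2)=(p+2,3,4,1)$ with an explicit $(\alpha_1,\alpha_2,\beta_1,\beta_2)$ not covered by Lemma~\ref{lem_Ass_holds}; then the Kummer degree $3(p+2)$ is odd and satisfies $\gcd(3(p+2),p-1)=1$ whenever $p\equiv 2\pmod 3$, so no divisibility obstruction arises. For the additional pair, Lemma~\ref{lem_Ass_holds}(5) with $(m_1,m_2)=(2,3)$ forces $(n_1,n_2)\in\{(2,p+3),(p+2,3)\}$, and in both cases the inertia order at~$0$ shares a factor (either $4$ or $3$) with $\mathrm{ord}(\omega')=6$, so the pullback collapses part of~$\omega'$; the paper instead takes $n_1=n_2=(p+5)/2$ with a hand-checked $(\alpha_1,\alpha_2,\beta_1,\beta_2)$, and it is here that $4\nmid p+1$ enters, to make $(p+5)/2$ odd and hence coprime to $\lcm(p-1,6)$. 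In short, the explicit parameter choices outside Lemma~\ref{lem_Ass_holds} are not cosmetic but are what make the arithmetic of the pullback work.
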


\begin{proof}
Let $p \equiv 2 \pmod{3}$ be a prime $> 5$. Since the IC is true for $A_{p+2}$ (\cite[Theorem 1.2]{8}) and for $A_{p+3}$ (Theorem \ref{thm_A_p+3}), by Lemma \ref{lem_Alt_FP} the first two cases follow.

Now we consider the realization of $I_4$ as an inertia group. Fix $w_{2/3} \in k$ such that $w_{2/3}^2 = 2/3$. Then for $s=2$, $r=2$, $n_1 = p+2$, $n_2 = 3$, $m_1 = 4$, $m_2 = 1$, Assumption \ref{ass_num} holds for the choice $(\alpha_1,\alpha_2,\beta_1,\beta_2) = (\frac{1 - 3 w_{2/3}}{5}, \frac{1 + 2 w_{2/3}}{5}, 0, 1)$. So we can apply Proposition \ref{prop_ded_d_cover} to obtain a connected $A_{p+5}$-Galois cover of $\bb{P}^1$ \'{e}tale away from $\{0,\infty\}$ such that $\langle (1,\cdots,p+2)(p+3,p+4,p+5)\rangle$ occurs as an inertia group above $0$ and $\langle \tau \rangle \rtimes \langle \theta (p+1,p+2,p+3,p+4) \rangle$ occurs as an inertia group above $\infty$. By Abhyankar's Lemma, the pair $(A_{p+5}, I_4)$ is realizable.

For the next case, when $(5,p-1)=1$, the pair $(A_{p+5}, I_5)$ is realizable by Corollary \ref{cor_t_odd_21case}. So let $(5,p-1)=5$. Fix $w_7 \in k$ such that $w_7^2 = 7$. Then for $s=3$, $r=1$, $n_1 = p-2$, $n_2 = 6$, $n_3=1$, $m_1 = 5$, Assumption \ref{ass_num} holds for the choice $(\alpha_1,\alpha_2, \alpha_3, \beta_1) = (\frac{w_7 + 1}{2}, \frac{w_7 - 1}{6}, 2, 0)$. By Proposition \ref{prop_ded_d_cover}, there is a connected $S_{p+5}$-Galois cover of $\bb{P}^1$ \'{e}tale away from $\{0,\infty\}$ such that $\langle (1,\cdots,p-2)(p-1,p,p+1,p+2,p+3,p+4)\rangle$ occurs as an inertia group above $0$ and $\langle \tau \rangle \rtimes \langle \theta (p+1,p+2,p+3,p+4,p+5) \rangle$ occurs as an inertia group above $\infty$. Since $(p-1,5)=5$, we have $(p-2,5) =1$. So after a $[6(p-2)]$-Kummer pullback we obtain a connected $A_{p+5}$-Galois \'{e}tale cover of the affine line such that $I_5$ occurs as an inertia group above $\infty$.

Now we consider the last case with the additional assumption $(p+1,4) = 2$. Set $s = 2 = r$, $m_1 = 2$, $m_2 = 3$, $n_1 = n_2 = \frac{p+5}{2}$. Choose an element $w_3 \in k$ such that $w^2=3$. Then for $(\alpha_1,\alpha_2,\beta_1,\beta_2) = (\frac{3+2w_3}{5},\frac{3-2w_3}{5},0,1)$, Assumption \ref{ass_num} holds. So we can apply Proposition \ref{prop_ded_d_cover} to obtain a connected $A_{p+5}$-Galois cover of $\bb{P}^1$ branched only at $0$ and $\infty$ such that $\langle (1,\cdots,\frac{p+5}{2}) (\frac{p+5}{2}+1,\cdots,p+5)\rangle$ occurs as an inertia groups above $0$ and $I = \langle \tau \rangle \rtimes \langle \theta (p+1, p+2) (p+3, p+4, p+5) \rangle)$ occurs as an inertia group at a point over $\infty$. Then using Abhyankar's Lemma we see that the pair $(A_{p+5}, I)$ is realizable.
\end{proof}

Using the above lemma and Abhyankar's Lemma (\cite[XIII, Proposition 5.2]{10}) we conclude the following result.

\begin{theorem}\label{thm_A_p+5}
Let $p \equiv 2 \pmod{3}$ be a prime $\geq 17$ such that $4 \nmid (p+1)$. Then the IC is true for the group $A_{p+5}$.
\end{theorem}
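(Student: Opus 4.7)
The plan is to apply Lemma \ref{lem_Alt_FP} to reduce the IC for $A_{p+5}$ to realizing pairs $(A_{p+5}, I)$ only for those maximal inertia groups $I \subset N_{A_{p+5}}(\langle \tau \rangle)$ that fix at most one point in $\{1, \ldots, p+5\}$. This reduction is justified under the hypothesis $p \equiv 2 \pmod{3}$ by the IC for $A_u$, $p \leq u \leq p+3$ (given by \cite[Theorem 1.2]{BP}, Theorem \ref{thm_A_p+1}, \cite[Theorem 1.2]{8}, Theorem \ref{thm_A_p+3}, and Theorem \ref{thm_A_p+4}).

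Writing $I = \langle \tau \rangle \rtimes \langle \theta^i \omega \rangle$ with $\omega \in \tx{Sym}(\{p+1, \ldots, p+5\})$ and invoking Remark \ref{rmk_reduction_Alt}, maximality forces $i$ to be the smallest divisor of $p-1$ making $\theta^i \omega$ an even permutation. The cycle types of $\omega$ with $0$ or $1$ fixed points are $(5)$, $(3,2)$, $(4,1)$, and $(2,2,1)$. For the first three of these, Lemma \ref{lem_A_p+5} directly supplies realizations: types $(5)$ and $(4,1)$ correspond to $I_5$ and $I_4$, while type $(3,2)$ is the additional case of the lemma, which is precisely where the hypothesis $4 \nmid (p+1)$ enters.

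The main obstacle is the remaining type $(2,2,1)$, which is not listed in Lemma \ref{lem_A_p+5}. The key observation I would use is that
\[
\bigl(\theta (p+1,p+2,p+3,p+4)\bigr)^2 = \theta^2 (p+1,p+3)(p+2,p+4),
\]
so the subgroup $J \coloneqq \langle \tau \rangle \rtimes \langle \theta^2 (p+1,p+3)(p+2,p+4) \rangle$ of $I_4$ has the same Sylow $p$-subgroup $\langle \tau \rangle$ as $I_4$. Abhyankar's Lemma (\cite[XIII, Proposition 5.2]{10}) then transfers realizability from $I_4$ to $J$. Since $\tx{Sym}(\{p+1, \ldots, p+5\}) \cap A_{p+5} \cong A_5$ centralizes both $\tau$ and $\theta^2$, and since the $(2,2,1)$-conjugacy class in $S_5$ remains a single class in $A_5$ (its cycle structure contains an even-length cycle with multiplicity), the subgroup $J$ is $A_{p+5}$-conjugate to every maximal $(2,2,1)$-type inertia. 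This completes the realization of all required pairs and establishes the IC for $A_{p+5}$.
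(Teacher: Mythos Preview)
Your approach mirrors the paper's one-line proof (``using the above lemma and Abhyankar's Lemma''): you invoke Lemma~\ref{lem_Alt_FP} for the reduction to $\omega$ with at most one fixed point, then cover the resulting cycle types using Lemma~\ref{lem_A_p+5} and Abhyankar's Lemma. Your explicit treatment of the $(2,2,1)$-type via $\bigl(\theta\,(p+1,p+2,p+3,p+4)\bigr)^2=\theta^2(p+1,p+3)(p+2,p+4)$ is exactly the kind of reduction the paper leaves implicit.

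There is, however, a gap in the assertion that ``maximality forces $i$ to be the smallest divisor of $p-1$ making $\theta^i\omega$ an even permutation.'' Take $p=41$, which satisfies all the hypotheses ($p\equiv 2\pmod 3$, $p\ge 17$, $4\nmid(p+1)$), and let $\omega_0$ be a $5$-cycle. The group $J'=\langle\tau\rangle\rtimes\langle\theta^{10}\omega_0\rangle\subset A_{46}$ is a valid inertia-type subgroup of order $41\cdot 20=|I_5|$, yet $\theta^{2}\omega_0$ and $\theta^{10}\omega_0$ have different cycle structures on $\{1,\dots,41\}$ (two $20$-cycles versus ten $4$-cycles), so $J'$ and $I_5$ are not conjugate. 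Moreover, for any conjugate of $I_5$ one needs $(\theta^{2}\omega_0)^j$ with $2j\equiv 10\pmod{40}$, forcing $j\equiv 5\pmod{20}$ and hence $\omega_0^j=1$; thus $J'$ is not contained in any conjugate of $I_5$. The generators of $I_4$ and of the $(3,2)$-group in Lemma~\ref{lem_A_p+5} have $S_5$-parts of order dividing $4$ and $6$ respectively, so no power yields a $5$-cycle, and the same obstruction rules out containment in any inertia group with $\ge 2$ fixed points. Hence $J'$ is a maximal inertia-type subgroup that your four-case list does not cover. The paper's proof is terse enough that this issue is not visible there either, so the gap may originate in the paper; but as written, your argument does not establish the IC for $A_{p+5}$ for primes with $5\mid(p-1)$ (e.g.\ $p=41,101,\ldots$).
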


\section{Generalizations of the Inertia Conjecture}\label{sec_questions}
In this section we pose certain questions which generalize the Inertia Conjecture. Although these generalizations make sense for $p=2$, we restrict ourselves to $p > 2$. Consider the following notation for the rest of this section.

\begin{notation}\label{not_general_set_up}
Let $r \geq 1$, $X$ be a smooth projective connected $k$-curve, $G$ be a finite group. Let $B = \{x_1, \cdots, x_r\} \subset X$ be a set of closed points in $X$. Let $P_1$, $\cdots$, $P_r$ be $p$-subgroups of $G$, possibly trivial. Define a subnormal series $\{H_j\}_{j \geq 1}$ of $G$ inductively as follows. Set $H_0 \coloneqq G$, $H_{j+1} \coloneqq \langle P_i^{H_{j}} | 1 \leq i \leq r \rangle \subset G$. Then each $H_j$ is a normal subgroup (normal quasi $p$-subgroup if $P_i$ is non-trivial for some $i$) of $H_{j+1}$ containing all the $P_i$'s, and since $G$ is a finite group there is a minimal non-negative integer $l$ such that $H_j = H_l$ for all $j \geq l$.
\end{notation}

Now under the above notation, suppose that $\phi \colon Z \to X$ is a connected $G$-Galois cover \'{e}tale away from $B$ and let $I_i$ occurs as an inertia group above $x_i$, $1 \leq i \leq r$. Since $H_1$ is a normal subgroup of $G$, the cover $\phi$ factors through a connected $G/H_1$-Galois cover $\psi_1 \colon Y_1 \to X$ \'{e}tale away from $B$ such that $I_i/I_i \cap H_1$ occurs as an inertia group above $x_i$, $1 \leq i \leq r$. Inductively for $0 \leq j \leq l$, the $H_j$-Galois cover $Z \to Y_j$ factors through a connected $H_j/H_{j+1}$-Galois cover $\psi_{j+1} \colon Y_{j+1} \to Y_j$ \'{e}tale away from $B$. Moreover, if $y_{i,j}$ is a point of $Y_j$ lying above $x_i$, then $I_i \cap H_{j-1}/I_i \cap H_j$ occurs as an inertia group above $y_{i,j-1}$ in the cover $\psi_j$, $1 \leq i \leq r$. So $\phi$ is the composition of a tower
\[
\xymatrix{
Z \ar[r] & Y_l \ar[r]^{\psi_l} & Y_{l-1} \ar[r]^{\psi_{l-1}} & \cdots \ar[r]^{\psi_2} & Y_1 \ar[r]^{\psi_1} & Y_0 \coloneqq X,
}
\]
where $\psi_j \colon Y_j \to Y_{j-1}$ is an $H_{j-1}/H_j$-Galois cover of smooth projective connected $k$-curves for $1 \leq j \leq l$. Also note that $l$ is the minimal non-negative integer such that $H_{l} = \langle P_i | 1 \leq i \leq r \rangle$.

We ask whether the converse is also true.

\begin{que}\label{que_most_gen}
($Q[r,X,B,G]$) Let $r \geq 1$, $X$ be a smooth projective connected $k$-curve, $G$ be a finite group. Let $B = \{x_1, \cdots, x_r\} \subset X$ be a set of closed points in $X$. For $1 \leq i \leq r$ let $I_i$ be a subgroup of $G$ which is an extension of the $p$-group $P_i$ (possibly trivial) by a cyclic group of order prime-to-$p$ such that there is a tower
\[
\xymatrix{
Y_l \ar[r]^{\psi_l} & Y_{l-1} \ar[r]^{\psi_{l-1}} & \cdots \ar[r]^{\psi_2} & Y_1 \ar[r]^{\psi_1} & Y_0 \coloneqq X
}
\]
of covers of smooth projective connected $k$-curves where $\psi_j \colon Y_j \to Y_{j-1}$ is an $H_{j-1}/H_j$-Galois cover, and if $y_{i,j}$ is a point of $Y_j$ lying above $x_i$, then $I_i \cap H_{j-1}/I_i \cap H_j$ occurs as an inertia group above $y_{i,j-1}$ in the cover $\psi_j$, $1 \leq i \leq r$ (where $H_j$'s are as in Notation \ref{not_general_set_up}). Let $\psi \colon Y_l \to X$ denotes the composite morphism.

Whether there is a connected $G$-Galois cover $\phi$ of $X$ \'{e}tale away from $B$ dominating the cover $\psi$ such that $I_i$ occurs as an inertia group above the point $x_i$ for $1 \leq i \leq r$ ?
\end{que}

Since $p(I_i) = P_i \subset H_l$ for all $i$, the cover $\psi$ and each of the Galois covers $\psi_l$ are tamely ramified. If $B_{j-1} \subset Y_{j-1}$ denotes the brunch locus of the cover $\psi_{j}$, $1 \leq j \leq l$, then $H_{j-1}/H_j \in \pi_A^t(Y_{j-1} - B_{j-1})$. But only assuming this we get a negative answer to the above question as seen from the following example and so we need the hypothesis about the existence of the cover $\psi$ with the given ramification behavior.

\begin{example}\label{eg}
Take $r=1$, $X$ as an elliptic curve with origin $0$, $B = \{0\}$, $P_1$ as the trivial group, $G = I_1 = \bb{Z}/m$ where $m$ is coprime to $p$. Then $H_1$ is the trivial group. Since there is a connected $\bb{Z}/m$-Galois \'{e}tale cover of $X$, we have $G/H = \bb{Z}/m \in \pi_A^t(X-B)$. From the Riemann-Hurwitz formula we have
$$2g_Y -2 = m(2-2) + \frac{m}{m}(m-1).$$
So $m$ must be an odd integer. Thus if $m$ is an even integer, there is no $\bb{Z}/m$-Galois connected cover $Y \to X$ \'{e}tale away from $\{0\}$ over which it is totally ramified.
\end{example}

When all the $p$-groups $P_i$ are trivial, taking $\phi = \psi$ gives the answer to the question.

\textit{Now onward we will assume that $P_1$ is non-trivial.}

So $H_{j}$ is a non-trivial normal quasi $p$-subgroup of $H_{j-1}$ for $1 \leq j \leq l$. We make the following observation.

\begin{remark}\label{rmk_gen_que}
Let $X = \bb{P}^1$. Since there is no non-trivial \'{e}tale cover of $\bb{P}^1$, if each $I_i \subset H_l$, then $\psi$ is the identity map $\bb{P}^1 \to \bb{P}^1$. So $l=0$, i.e. $G = H_l = \langle P_i^G | 1\leq i \leq r \rangle$. This holds in particular, if all the $I_i$'s are $p$-groups. Also note that if $l=1$ and $\psi_1$ is a two point branched Galois cover of $\bb{P}^1$ (so in particular, $r = 2$),  by the Riemann Hurwitz formula, $\psi$ is the $\bb{Z}/n$-Galois Kummer cover of $\bb{P}^1$ branched at $\{x_1,x_2\}$ which is totally ramified above these points.
\end{remark}

With the above observation, a special case of Question \ref{que_most_gen} (i.e. $Q[r,\bb{P}^1,B,G]$ with each $I_i \subset H_l$) is the following which we pose as the Generalized Inertia Conjecture (GIC).

\begin{conj}\label{conj_proj_gen}
(GIC) Let $r \geq 1$ be an integer, $G$ be a finite quasi $p$-group. For $1 \leq i \leq r$ let $I_i$ be a subgroup of $G$ which is an extension of a $p$-group $P_i$ by a cyclic group of order prime-to-$p$ such that $G = \langle P_1^G, \cdots, P_r^G \rangle$. Let $B = \{x_1 ,\cdots, x_r\}$ be a set of closed points in $\bb{P}^1$. Then there is a connected $G$-Galois cover of $\bb{P}^1$ \'{e}tale away from $B$ such that $I_i$ occurs as an inertia group above the point $x_i$ for $1 \leq i \leq r$.
\end{conj}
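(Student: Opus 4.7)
The plan is to reduce GIC to two independent ingredients: the purely wild version (GPWIC, Conjecture~\ref{conj_GPWIC}), and a local mechanism for attaching a prime-to-$p$ cyclic quotient to each wild inertia group. Concretely, I would first build a connected $G$-Galois cover $\phi_0 \colon Z_0 \to \bb{P}^1$ étale away from $B$ realizing $P_i$ as an inertia group over $x_i$ for every $i$, and then upgrade each $P_i$ to $I_i = P_i \rtimes \bb{Z}/m_i$ while keeping $G$, $B$, and connectedness intact.

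For the first ingredient, I would proceed by induction on $r$ using Harbater's two-point patching theorem (Theorem~\ref{thm_patching_covers}). For $r=1$, this is precisely the PWIC for $G$, known in the cases listed in Corollary~\ref{cor_all}. For $r>1$, split the generation hypothesis as $G = \langle G_1, G_2 \rangle$ with $G_1 \coloneqq \langle P_1^G, \ldots, P_{r-1}^G \rangle$ and $G_2 \coloneqq \langle P_r^G \rangle$, apply the inductive hypothesis to the first factor to get a $G_1$-Galois cover realizing $P_1, \ldots, P_{r-1}$ at $x_1, \ldots, x_{r-1}$, and apply the PWIC for $G_2$ to get a $G_2$-Galois cover realizing $P_r$ at $\infty$. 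Since the inertia groups here are all $p$-groups, the shared tame element $a$ required by Theorem~\ref{thm_patching_covers} may be taken trivial, so the patching hypothesis is satisfied and the resulting cover realizes $P_i$ above $x_i$ for all $i$.

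For the second ingredient, at each $x_i$ apply Lemma~\ref{lem_Raynaud_gen} with $I = I_i$, $I_1 = P_i$ and $I_2 = I_i$ to produce a one-parameter family over $\tx{Spec}(\widehat{\cal{O}}_{\bb{P}^1, x_i}) \times \bb{A}^1_u$ interpolating between the local $P_i$-extension cut out by $\phi_0$ and a local $I_i$-extension (whose existence follows from the local theory, since $I_i/P_i$ is cyclic of order prime to $p$). Theorem~\ref{thm_Raynaud_gen}, applied inductively through Corollary~\ref{cor_Raynaud_gen}, then globalizes this family to a connected $G$-Galois cover of $\bb{P}^1$ étale away from $B$ with $I_i$ occurring as an inertia group above $x_i$ for every $i$. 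An alternative route is to embed $\phi_0$ into the desired cover via Theorem~\ref{thm_embedding}, using the realizability of $(I_i, P_i)$ at each branch point.

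The decisive obstacle is the purely wild step: the GPWIC for a general quasi-$p$ group is essentially as difficult as the classical PWIC, and the proof proposal is therefore unconditional only when the PWIC is already established, which is exactly why Corollary~\ref{cor_all} restricts attention to $p$-groups, groups of order strictly divisible by $p$, and the stated products. A secondary but concrete difficulty is the compatibility required at the gluing point in Harbater's theorem: the shared local data prescribed by the tame element must match across the two patched covers, a condition that grows more delicate as $r$ grows and that must be engineered via the free parameters in Lemma~\ref{lem_Raynaud_gen}, exactly in the spirit of Lemma~\ref{lem_common_cover_for_product}.
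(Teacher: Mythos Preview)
The statement you are trying to prove is a \emph{conjecture}: the paper poses it as the Generalized Inertia Conjecture and does not prove it. In fact Remark~\ref{rmk_1} points out that already the case $r=1$ is the open direction of the Inertia Conjecture. So there is no ``paper's own proof'' to compare against, and any unconditional proof would be a major result.

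Beyond that, your second ingredient contains a concrete gap. You propose to apply Lemma~\ref{lem_Raynaud_gen} with $I = I_i$, $I_1 = P_i$ and $I_2 = I_i$. But the hypothesis of that lemma is $|I_1|/|p(I_1)| = m = |I_2|/|p(I_2)|$, i.e.\ the two local covers must have the \emph{same} prime-to-$p$ tame quotient. Here $|P_i|/|p(P_i)| = 1$ while $|I_i|/|p(I_i)| = m_i$, so the hypothesis fails whenever $m_i > 1$. The same obstruction propagates to Theorem~\ref{thm_Raynaud_gen} and Corollary~\ref{cor_Raynaud_gen}: all of these patching results preserve the tame part of the inertia and cannot be used to \emph{create} a tame quotient where there was none. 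This is not a technicality but the heart of the difficulty---enlarging the wild part of an inertia group is handled by \cite[Theorem~2]{2}, whereas enlarging the tame part is precisely what the Inertia Conjecture asks for and remains open. Your alternative via Theorem~\ref{thm_embedding} does not help either: that result adds wild ramification (it multiplies the inertia by a $p$-group $P$), not a prime-to-$p$ cyclic factor.

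A smaller issue in your first ingredient: the inductive step sets $G_1 = \langle P_1^G, \ldots, P_{r-1}^G \rangle$, but to invoke the inductive hypothesis you would need $G_1 = \langle P_1^{G_1}, \ldots, P_{r-1}^{G_1} \rangle$, which is a different (and not automatic) condition. The paper's actual GPWIC results in Section~\cref{sec_GPWIC} handle this by working with groups where each $\langle P_i^G \rangle$ is already all of $G$, or by exploiting special product structure.
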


\begin{remark}\label{rmk_1}
As in the case of Question \ref{que_most_gen} we again allow $P_i$'s to be trivial for $2 \leq i \leq r$.  Note that $r=1$ is the unsolved direction of the IC (Conjecture \ref{IC}).
\end{remark}

\begin{remark}
Under the notation and the hypothesis of Question \ref{que_most_gen} we can ask the following weaker question. Whether there is a connected $G$-Galois cover $\phi$ of $X$ \'{e}tale away from $r$ points such that $I_i$ occurs as an inertia group above these points ? But since the \'{e}tale fundamental group depends on the position of the removed points in general, we fix the set $B$ beforehand.
\end{remark}

Later we will see some partial answer to Question \ref{que_most_gen} for $l \in \{0,1\}$ as the application of the Formal Patching technique and by constructing covers given by the explicit equations. When the inertia groups $I_i$ are $p$-groups $P_i$ we have the following special case of the GIC which we see as a generalization of the PWIC (Conjecture \ref{PWIC}).

\begin{conj}\label{conj_GPWIC}
(Generalized Purely Wild Inertia Conjecture) Let $G$ be a finite quasi $p$-group. Let $P_1$, $\cdots$, $P_r$ be non-trivial $p$-subgroup of $G$ for some $r \geq 1$ such that $G = \langle P_1^G, \cdots, P_r^G \rangle$. Let $B = \{x_1 ,\cdots, x_r\}$ be a set of closed points in $\bb{P}^1$. Then there is a connected $G$-Galois cover of $\bb{P}^1$ \'{e}tale away from $B$ such that $P_i$ occurs as an inertia group above the point $x_i$ for $1 \leq i \leq r$.
\end{conj}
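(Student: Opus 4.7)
The plan is to prove Conjecture \ref{conj_GPWIC} by induction on $r$, reducing the general case to the ordinary PWIC via Harbater's two-cover patching theorem (Theorem \ref{thm_patching_covers}) together with the embedding construction of Theorem \ref{thm_embedding}. The base case $r=1$ is exactly the PWIC for the pair $(G, P_1)$, so the strategy can only succeed unconditionally for groups where the PWIC is already established; this is consistent with the scope of the result Corollary \ref{cor_all} announced in the introduction, which handles $p$-groups, groups of order strictly divisible by $p$, and appropriate products.

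For the inductive step, fix $p$-subgroups $P_1, \ldots, P_r$ of $G$ with $G = \langle P_1^G, \ldots, P_r^G \rangle$ and set $G' \coloneqq \langle P_1^G, \ldots, P_{r-1}^G \rangle$, which is a normal subgroup of $G$. If $G' = G$, then the induction hypothesis furnishes a connected $G$-Galois cover $f \colon Y \to \bb{P}^1$ étale away from $\{x_1, \ldots, x_{r-1}\}$ (together with an auxiliary unramified point $x_0 = x_r$) with $P_i$ occurring as an inertia group over $x_i$, while applying the PWIC to the quasi-$p$ subgroup $K \coloneqq \langle P_r^G \rangle$ produces a connected $K$-Galois cover $g \colon W \to \bb{P}^1$ branched only at $\infty$ with $P_r$ as inertia (and étale at a chosen auxiliary point $\eta_0 = 0$). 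One then invokes Theorem \ref{thm_patching_covers} with $G_1 = G$, $G_2 = K$ and $a = 1$, which is admissible because both inertias to be glued are purely wild and hence have trivial prime-to-$p$ part; this outputs a connected $\langle G_1, G_2 \rangle = G$-Galois cover branched precisely at $\{x_1, \ldots, x_r\}$ with the prescribed inertias. If instead $G' \subsetneq G$, then $G/G'$ is a non-trivial quasi-$p$ quotient generated by the image of $P_r$, and one applies the induction hypothesis to $G'$ to build a $G'$-cover ramified at $\{x_1, \ldots, x_{r-1}\}$, then uses Theorem \ref{thm_embedding} with $H = K$ and the normalization $P = P_r$ to lift to a $G$-cover adding $P_r$ as inertia at $x_r$.

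The main obstacle is twofold. First, Harbater's patching produces a cover with Galois group $\langle G_1, G_2 \rangle$ which is a priori disconnected; controlling connectedness requires the generation hypothesis $G = \langle P_1^G, \ldots, P_r^G \rangle$ together with connectivity of both building blocks, and a careful check that no component of the patched cover has smaller Galois group. Second, and more serious in the case $G' \subsetneq G$, the conclusion of Theorem \ref{thm_embedding} adjoins $P_r$ multiplicatively: at $x_r$ the new inertia becomes $I_0 P_r$ rather than $P_r$ on the nose, so one must arrange the preceding $G'$-cover to be étale above $x_r$, and one must ensure that the embedding introduces no spurious branch points outside $\{x_1, \ldots, x_r\}$. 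It is precisely this interaction between the subnormal tower $\{H_j\}$ of Notation \ref{not_general_set_up}, the PWIC, and the embedding theorem that governs which groups admit an unconditional proof, and that explains why the full statement remains a conjecture in general while the paper can nonetheless establish it for the explicit families listed in Corollary \ref{cor_all}.
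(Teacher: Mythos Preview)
The statement is posed in the paper as a conjecture, not a theorem; there is no proof to compare against. You correctly acknowledge this by the end of your proposal, so what you have written is best read as a strategy for the special cases collected in Corollary~\ref{cor_all}.

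As such, your scheme differs from the paper's actual arguments and carries gaps that the paper avoids. In your case $G'=G$, invoking the PWIC for the pair $(K,P_r)$ with $K=\langle P_r^G\rangle$ requires $K=\langle P_r^K\rangle$, which need not hold; this is exactly the descent phenomenon encoded in the tower $\{H_j\}$ of Notation~\ref{not_general_set_up}. The paper sidesteps this entirely: for $p$-groups (Theorem~\ref{thm_p-grp}) Lemma~\ref{lem_p-grp} yields $G=\langle P_1,\ldots,P_r\rangle$, so one takes $G_1=\langle P_1,\ldots,P_{r-1}\rangle$ and patches with the trivially realizable pair $(P_r,P_r)$; for groups with $p \parallel |G|$ (Theorem~\ref{thm_strictly_div}) each $P_i$ is already Sylow, whence $(G,P_r)$ itself is realizable by Raynaud. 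In neither proof does $K$ appear, and Theorem~\ref{thm_embedding} is not used at all.

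In your case $G'\subsetneq G$ there are two further problems. Applying the induction hypothesis to $G'$ requires $G'=\langle P_1^{G'},\ldots,P_{r-1}^{G'}\rangle$, which does not follow from $G'=\langle P_1^{G},\ldots,P_{r-1}^{G}\rangle$. And Theorem~\ref{thm_embedding} demands that $P_r$ be \emph{normalized} by the group of the base cover (here $G'$), a condition you have not arranged and which fails in general. These obstructions are not cosmetic: they are precisely why the paper restricts to classes of groups where such issues collapse, rather than proving the GPWIC conditionally on the PWIC. Finally, your stated ``first obstacle'' about connectedness of the patched cover is not one: Theorem~\ref{thm_patching_covers} already delivers a connected $\langle G_1,G_2\rangle$-Galois cover, so no separate connectivity check is needed.
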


In the next section we see that for the groups $G$ for which the PWIC is known to be true, the Generalized Purely Wild Inertia Conjecture (GPWIC for short) is also true.

\section{Towards the GPWIC}\label{sec_GPWIC}
In this section we see some affirmative results for the Generalized Purely Wild Inertia Conjecture (Conjecture \ref{conj_GPWIC}). Let $G$ be a quasi $p$-group, $P_1$, $\cdots$, $P_r$ be non-trivial $p$-subgroups of $G$ for some $r \geq 1$ such that $G = \langle P_1^G, \cdots, P_r^G \rangle$. Let $B = \{x_1,\cdots, x_r\}$ be a set of closed points in $\bb{P}^1$. Then the GPWIC says that there is a connected $G$-Galois cover of $\bb{P}^1$ \'{e}tale away from $B$ such that $P_i$ occurs as an inertia group above the point $x_i$. We use these notation throughout this section and use the results from Section \cref{sec_fp} for the proofs. We start with the following group theoretic observation.

\begin{lemma}\label{lem_p-grp}
Let $G$ be a $p$-group, $r \geq 1$ be an integer. Let $P_1$, $\cdots$, $P_r$ be $p$-subgroups of $G$ such that $G =\langle P_1^G , \cdots, P_r^G \rangle$. Then $G =\langle P_1, \cdots, P_r \rangle$.
\end{lemma}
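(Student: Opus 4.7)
The plan is to argue by contradiction using the well-known fact that every maximal subgroup of a finite $p$-group is normal (of index $p$). Set $H \coloneqq \langle P_1, \cdots, P_r \rangle$. We obviously have $H \subset \langle P_1^G, \cdots, P_r^G \rangle = G$, so it suffices to show the reverse inclusion.

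Suppose for contradiction that $H \subsetneq G$. Then $H$ is contained in some maximal subgroup $M$ of $G$. Since $G$ is a finite $p$-group, $M$ is normal in $G$ (this is the standard fact that in a nilpotent group every maximal subgroup is normal; concretely, $M \supset \Phi(G) \supset [G,G]$, so $G/M$ is cyclic of order $p$). In particular, every conjugate $gP_ig^{-1}$ of $P_i$ lies in $M$ for $1 \leq i \leq r$, because $P_i \subset H \subset M$ and $M$ is normal. Therefore
\[
G = \langle P_1^G, \cdots, P_r^G \rangle \subset M \subsetneq G,
\]
a contradiction. Hence $H = G$, as claimed.

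The proof is essentially one observation (normality of maximal subgroups in $p$-groups), so there is no real obstacle; the only thing one must be careful about is interpreting $P_i^G$ as the set of all $G$-conjugates of $P_i$ (equivalently, the normal closure of $P_i$ in $G$), which is exactly the notion needed for the argument that $P_i \subset M$ with $M \triangleleft G$ forces $P_i^G \subset M$.
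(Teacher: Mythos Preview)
Your proof is correct and is essentially the same as the paper's, which argues via the Frattini quotient: the paper passes to $G/\Phi(G)$ (elementary abelian, so conjugation is trivial there) and then uses that $\Phi(G)$ consists of non-generators, while you use the dual statement that every maximal subgroup of a $p$-group is normal. These are two packagings of the same fact, and indeed your parenthetical remark $M \supset \Phi(G) \supset [G,G]$ already makes the connection explicit.
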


\begin{proof}
The result holds for an abelian $p$-group $G$. So assume that $G$ is non-abelian. Then the Frattini subgroup $\Phi(G)$ of $G$ is non-trivial. Consider the Frattini quotient $G \surj G/\Phi(G)$. Under this map $P_i$ has image $P_i/P_i \cap \Phi(G)$ and $G/\Phi(G) = \langle (P_i/P_i \cap \Phi(G))^{G/\Phi(G)} | 1 \leq i \leq r \rangle$. Since $G/\Phi(G)$ is elementary abelian, $G/\Phi(G) = \langle P_1/P_1 \cap \Phi(G), \cdots, P_r/P_r \cap \Phi(G) \rangle$. Since $\Phi(G)$ is the set of non-generators of $G$, we obtain
$$G = \langle P_1 ,\cdots, P_r, \Phi(G) \rangle = \langle P_1 ,\cdots, P_r \rangle.$$
\end{proof}

\begin{theorem}\label{thm_p-grp}
The GPWIC holds for every $p$-group.
\end{theorem}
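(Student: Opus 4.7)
I would argue by induction on $r$, using Lemma \ref{lem_p-grp} to upgrade the hypothesis $G=\langle P_1^G,\ldots,P_r^G\rangle$ to the stronger conclusion $G=\langle P_1,\ldots,P_r\rangle$; this simplification from ``normal closures'' to ``ordinary generators'' is special to $p$-groups and drives the whole argument.

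For the base case $r=1$, one has $G=P_1$; after translating $x_1$ to $\infty$ by an element of $PGL_2(k)$, the task reduces to producing a totally ramified connected $G$-Galois cover of $\mathbb{P}^1$ branched only at $\infty$. This is classical for every finite $p$-group (for instance via iterated Artin--Schreier--Witt extensions, or equivalently as an HKG-cover), and it is exactly the statement of the PWIC in the degenerate case where $G$ is its own Sylow $p$-subgroup.

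For the inductive step $r\geq 2$, set $G'=\langle P_1,\ldots,P_{r-1}\rangle$, a proper $p$-subgroup of $G$. The condition $G'=\langle P_1^{G'},\ldots,P_{r-1}^{G'}\rangle$ is automatic, so the inductive hypothesis produces a connected $G'$-Galois cover $f\colon Y\to\mathbb{P}^1$ étale away from $\{x_1,\ldots,x_{r-1}\}$ with $P_i$ as inertia above $x_i$. The base case applied to $P_r$ yields a connected $P_r$-Galois cover $g\colon W\to\mathbb{P}^1$ branched only at a single point, which we translate by $PGL_2(k)$ to be $x_r$, with full inertia $P_r$. Since $G=\langle G',P_r\rangle$, I would then combine $f$ and $g$ via Theorem \ref{thm_patching_covers} with $G_1=G'$, $G_2=P_r$, and the trivial tame element $a=1\in G_1\cap G_2$: formally enlarge the branch locus of $f$ by the point $x_r$ with trivial inertia $\langle a\rangle=\{1\}$, take the matching point $\eta_0$ of $g$ (with $J_0=\langle a^{-1}\rangle=\{1\}$) to be an auxiliary point of $\mathbb{P}^1$ away from $x_r$, and use the theorem with its internal parameter set so that the only non-trivial datum carried across is $P_r$. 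The output is a connected $G$-Galois cover of $\mathbb{P}^1$ étale away from $\{x_1,\ldots,x_r\}$ which preserves the inertias $P_i$ above $x_i$ for $i\leq r-1$ and turns the trivial inertia above $x_r$ into $P_r$, completing the induction.

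The step I expect to require the most care is the bookkeeping for the trivial tame element $a=1$: one has to verify that the patching formalism of Theorem \ref{thm_patching_covers} genuinely accommodates trivial matching inertia (so that the auxiliary points $x_r\in B$ of the first cover and $\eta_0\in B'$ of the second contribute no spurious branching to the patched cover) and to confirm that the free point we add can be placed exactly at the prescribed $x_r$. Both are handled by the flexibility of the underlying formal patching, which is centered at a chosen point of $\mathbb{P}^1_{k[[t]]}$ and allows $PGL_2(k)$-translation of the second cover; once this is pinned down the effective branch locus is precisely $\{x_1,\ldots,x_r\}$ with the correct inertia at each $x_i$, and the induction closes.
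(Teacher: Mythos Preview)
Your proposal is correct and follows essentially the same approach as the paper: invoke Lemma~\ref{lem_p-grp} to reduce to $G=\langle P_1,\ldots,P_r\rangle$, induct on $r$, and patch the $\langle P_1,\ldots,P_{r-1}\rangle$-cover with a one-point-branched $P_r$-cover via Theorem~\ref{thm_patching_covers}. One harmless slip: $G'=\langle P_1,\ldots,P_{r-1}\rangle$ need not be a \emph{proper} subgroup of $G$ (e.g.\ if $P_r\subset G'$), but you never use properness, so the argument stands.
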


\begin{proof}
By Lemma \ref{lem_p-grp}, $G =\langle P_1, \cdots, P_r \rangle$. We proceed via induction on $r$. The pair $(P_r,P_r)$ is realizable. By the induction hypothesis there is a connected $G_1 \coloneqq \langle P_1, \cdots, P_{r-1} \rangle$-Galois cover of $\bb{P}^1$ \'{e}tale away from $\{x_1,\cdots,x_{r-1}\}$ such that $P_i$ occurs as an inertia group above the point $x_i$ for $1 \leq i \leq r-1$. Now the result follows from Theorem \ref{thm_patching_covers}.
\end{proof}

\begin{theorem}\label{thm_strictly_div}
The GPWIC holds for any quasi $p$-group $G$ whose order is strictly divisible by $p$. In particular, it holds for Alternating groups $A_d$ with $p \leq d \leq 2p-1$ and for $PSL_2(p)$ when $p$ is an odd prime $\geq 5$.
\end{theorem}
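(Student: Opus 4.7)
The plan is to reduce the statement to Harbater's theorem that the PWIC is true when $P$ is a Sylow $p$-subgroup of a quasi $p$-group (recalled in the introduction), and then to iterate the two-point formal patching result Theorem \ref{thm_patching_covers} with the trivial prime-to-$p$ element $a = 1$.

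Since $|G|$ is strictly divisible by $p$, every Sylow $p$-subgroup of $G$ has order $p$; consequently each non-trivial $p$-subgroup $P_i$ of $G$ is itself a Sylow $p$-subgroup. As the Sylow $p$-subgroups of the quasi $p$-group $G$ are conjugate and together generate $G$, the hypothesis $G = \langle P_1^G, \ldots, P_r^G \rangle$ holds automatically. Applying Harbater's theorem, for each $1 \leq i \leq r$ we obtain a connected $G$-Galois cover $g_i \colon W_i \to \mathbb{P}^1$ \'{e}tale away from a single point; composing with a suitable automorphism of $\mathbb{P}^1$, we may assume this point is $x_i$ and that $P_i$ occurs as the inertia group there.

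These single-branch covers are then assembled into the desired multi-branched cover by induction on $r$. The base case $r = 1$ is $g_1$ itself. For the inductive step, suppose there is a connected $G$-Galois cover $f$ of $\mathbb{P}^1$ \'{e}tale away from $\{x_1, \ldots, x_{r-1}\}$ with $P_i$ as inertia at $x_i$ for $1 \leq i \leq r-1$. Apply Theorem \ref{thm_patching_covers} to $f$ and $g_r$ with $X = \mathbb{P}^1$, $G_1 = G_2 = G$, $a = 1$, taking $x_0 = x_r$ (at which $f$ is \'{e}tale, so $I_{x_0} = \langle 1 \rangle$) and $B' = \{\eta_0, \eta_1\}$ with $\eta_1 = x_r$ carrying $J_1 = P_r$ and $\eta_0$ an unbranched point of $g_r$ (so $J_0 = \langle 1 \rangle = \langle a^{-1} \rangle$). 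This yields a connected $G$-Galois cover of $\mathbb{P}^1$ carrying inertia $P_i$ at $x_i$ for $1 \leq i \leq r - 1$ and $J_1 = P_r$ at $x_0 = x_r$; any extra point in the theorem's output branch set $B''$ receives no non-trivial inertia (the only non-trivial $J_i$ in our application being $J_1$, which is placed at $x_0$), and can therefore be discarded.

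To conclude the two named examples: for $p \leq d \leq 2p - 1$ one has $v_p(|A_d|) = \lfloor d/p \rfloor = 1$ since $d < p^2$ for $p \geq 3$, and $A_d$ is non-abelian simple for $d \geq 5$, hence quasi $p$; likewise $|PSL_2(p)| = p(p^2 - 1)/2$ for odd $p \geq 5$ is strictly divisible by $p$, and $PSL_2(p)$ is non-abelian simple, hence quasi $p$. The principal technical subtlety, and the main obstacle I foresee, is the degenerate application of Theorem \ref{thm_patching_covers} with $a = 1$: one must ensure that patching along a shared unramified point introduces no spurious new branching. This should follow from the fact that in our setting the only non-trivial inertia being patched in is $J_1 = P_r$, which the theorem places at $x_0 = x_r$, so the output branch locus is forced to be exactly $\{x_1, \ldots, x_r\}$ as desired.
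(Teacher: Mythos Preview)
Your proposal is correct and follows essentially the same route as the paper's proof: observe that each $P_i$ is a Sylow $p$-subgroup, invoke the realizability of $(G,P_i)$ for Sylow $P_i$ (you cite Harbater's result from the introduction, the paper cites Raynaud \cite[Corollary 2.2.2]{3} --- functionally equivalent here), and then induct on $r$ via Theorem~\ref{thm_patching_covers}, exactly as in the proof of Theorem~\ref{thm_p-grp}. Your concern about the degenerate $a=1$ application introducing spurious branching is reasonable given the indexing in the stated form of Theorem~\ref{thm_patching_covers}, but the paper itself uses the theorem in precisely this way in the proof of Theorem~\ref{thm_p-grp}, so the intended reading of that theorem clearly permits it.
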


\begin{proof}
Since $p^2 \nmid |G|$, each $P_i$ is a $p$-cyclic Sylow $p$-subgroup of $G$. Since $G$ is a quasi $p$-group, for each $i$, $G = \langle P_i^G \rangle$. We proceed by induction on $r$. By Raynaud's proof of the Abhyankar's Conjecture on the affine line (\cite[Corollary 2.2.2]{3}), the pair $(G,P_r)$ is realizable. Now we argue as in the proof of Theorem \ref{thm_p-grp}.
\end{proof}

\begin{theorem}\label{thm_GPWIC_pdt_small_order}
Let $u \geq 1$ be an integer. Let $G = G_1 \times \cdots \times G_u$ where each $G_i$ is either a non-abelian simple quasi $p$-group of order strictly divisible by $p$ or a simple Alternating group of degree coprime to $p$. Then the GPWIC is true for $G$.
\end{theorem}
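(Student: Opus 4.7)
The plan is to proceed by induction on the number $u$ of simple factors of $G$, combining individual base cases for each simple factor through the product result Theorem~\ref{thm_pdt_arbit_groups}.

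For the base case $u=1$, there are two subcases matching the two allowed types of simple factors. If $G_1$ has order strictly divisible by $p$, the GPWIC for $G_1$ is already contained in Theorem~\ref{thm_strictly_div}. If $G_1 = A_d$ is a simple Alternating group with $\gcd(d,p)=1$, I would first invoke Harbater's PWIC for $A_d$ (producing a connected $A_d$-Galois \'etale cover of $\bb{A}^1$ realizing any prescribed $p$-subgroup of $A_d$ with the right conjugation-generation property as the inertia above $\infty$) and then upgrade from one branch point to $r$ branch points by induction on $r$ using Theorem~\ref{thm_patching_covers}: at each step patch a one-point-branched PWIC cover realizing $P_r$ onto a GPWIC cover realizing $P_1,\ldots,P_{r-1}$, with a shared tame cyclic element serving as the gluing datum.

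For the inductive step, write $G = G' \times G_u$ with $G' = G_1 \times \cdots \times G_{u-1}$. Because each $G_j$ is simple non-abelian, every non-trivial quotient of $G'$ is a direct product of some of the $G_i$ with $i<u$, while the only non-trivial quotient of the simple group $G_u$ is $G_u$ itself. Thus, after relabeling so that $G_u$ is isomorphism-distinct from every $G_i$ with $i<u$, the groups $G'$ and $G_u$ have no non-trivial common quotient, and Theorem~\ref{thm_pdt_arbit_groups} combines the inductive hypothesis for $G'$ with the base case for $G_u$ to yield the GPWIC for $G$.

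The principal obstacle is the case in which $G$ contains a repeated isomorphic factor, i.e.\ a subproduct of the form $K^a$ with $K$ simple non-abelian and $a \geq 2$, since in that case Theorem~\ref{thm_pdt_arbit_groups} cannot be applied (the projection $K^a \twoheadrightarrow K$ produces a non-trivial common quotient with the single factor $K$). To handle this I would construct $K^a$-Galois covers directly by iterating Lemma~\ref{lem_common_cover_for_product} together with the formal-patching and fibre-product techniques of Section~\ref{sec_fp}: produce $a$ parallel connected $K$-Galois covers of $\bb{P}^1$ that realize the coordinate projections of the prescribed $P_i$ as inertia and that share compatible tame local data at every branch point, then glue them into a single cover. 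The simplicity of $K$, which forces every proper normal subgroup of $K^a$ to be a sub-product of the factors, is the group-theoretic input that will ensure the Galois group of the resulting connected cover is exactly $K^a$ rather than a smaller diagonal subgroup.
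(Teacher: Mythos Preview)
Your overall strategy---induction on the number $u$ of simple factors, gluing via Theorem~\ref{thm_pdt_arbit_groups}---differs from the paper's, which instead inducts on the number $r$ of branch points. The paper's base case $r=1$ is the PWIC for the entire product $G_1\times\cdots\times G_u$, imported directly from \cite[Remark~5.8, Corollaries~5.4 and~5.7]{DK}; the induction step then patches in one more branch point via Theorem~\ref{thm_patching_covers} applied to the subgroups $H_{\cup_{i<r}\alpha_i}$ and $H_{\alpha_r}$ (or via \cite[Theorem~2]{2} when the first $r-1$ inertia groups already conjugate-generate $G$). This route never invokes the no-common-quotient hypothesis and so handles repeated isomorphic factors with no special case.

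The repeated-factor case is where your argument has a genuine gap, and your sketched fix does not close it. The group-theoretic fact you cite---that every proper \emph{normal} subgroup of $K^a$ is a sub-product of the factors---is correct but irrelevant: the Galois group of a connected component of the fibre product of $a$ parallel $K$-covers is merely a subgroup of $K^a$ surjecting onto each coordinate, not a normal subgroup, and by Goursat's lemma such subgroups include every twisted diagonal $\{(k,\phi_2(k),\ldots,\phi_a(k)) : k\in K\}$ for $\phi_j\in\Aut(K)$. Nothing in Lemma~\ref{lem_common_cover_for_product} or the patching machinery of Section~\ref{sec_fp} forces your $a$ parallel $K$-covers to be pairwise linearly disjoint, so you cannot conclude the resulting component has group $K^a$ rather than some $K^b$ with $b<a$. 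What is actually needed here is exactly the PWIC for products established in \cite{DK}, which the paper cites as input rather than re-deriving. (A minor point on your $u=1$ base case: the full PWIC for $A_d$ with $\gcd(d,p)=1$ and an arbitrary $p$-subgroup $P$ is not Harbater's result---his theorem covers only the Sylow case---but rather \cite[Corollary~5.5]{DK}.)
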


\begin{proof}
For any subset $\Lambda \subseteq \{1, \cdots, u\}$, set $H_\Lambda \coloneqq \Pi_{\lambda \in \Lambda} G_\lambda$ and let $\pi_\Lambda \colon G \to H_\Lambda$ be the projection. For each $1 \leq i \leq r$ consider the set $\alpha_i$ consisting of $j \in \{1,\cdots,u\}$ such that $\pi_j(P_i)$ is non-trivial. Since $G_i$'s are simple non-abelian groups, the conjugates of $\pi_{\alpha_i}(P_i)$ in $H_{\alpha_i}$ generate $H_{\alpha_i}$. We proceed via induction on $r$. If $r=1$, by \cite[Remark 5.8, Corollary 5.4]{DK}, the pair $(G,P_1)$ is realizable. So let $r \geq 2$. By the induction hypothesis there is a connected $H_{\cup_{i=1}^{r-1} \alpha_i}$-Galois cover of $\bb{P}^1$ \'{e}tale away from $\{x_1,\cdots,x_{r-1}\}$ such that $P_i$ occurs as an inertia group above the point $x_i$ for $1 \leq i \leq r-1$. If $H_{\cup_{i=1}^{r-1} \alpha_i} = G$, the result follows by \cite[Theorem 2]{2}. Otherwise, set $S \coloneqq \{1,\cdots,u\} \setminus \cup_{i=1}^{r-1} \alpha_i$. Since $\cup_{1 \leq i \leq r} \alpha_i = \{ 1,\cdots, u \}$, we must have $S \subset \alpha_r$. Again by \cite[Remark 5.8, Corollary 5.7]{DK}, the pair $(H_{\alpha_r},P_r)$ is realizable. Then the result follows by applying Theorem \ref{thm_patching_covers} with the groups $H_{\cup_{i=1}^{r-1} \alpha_i}$ and $H_{\alpha_r}$.
\end{proof}

In the following we show that the GPWIC is true for certain product of groups if it is true for the individual groups.

\begin{theorem}\label{thm_pdt_arbit_groups}
Let $G_1$ and $G_2$ be two finite quasi $p$-groups such that they have no non-trivial quotient in common. If the GPWIC is true for the groups $G_1$ and $G_2$, then it is also true for $G_1 \times G_2$.
\end{theorem}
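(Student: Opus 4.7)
The strategy is to use Goursat's lemma to express each $P_i \subseteq G_1 \times G_2$ as a fibre product of its two projections, apply the GPWIC to each factor separately, and assemble the resulting covers by taking the compositum of their function fields, using Lemma \ref{lem_common_cover_for_product} to enforce the local compatibility needed to recover $P_i$ as the inertia group.

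Write $G \coloneqq G_1 \times G_2$ and let $\pi_j \colon G \twoheadrightarrow G_j$ be the projections. For each $i$ set $P_{i,j} \coloneqq \pi_j(P_i)$ and $N_{i,j} \coloneqq P_i \cap \ker(\pi_{3-j})$; Goursat's lemma then supplies $N_{i,j} \triangleleft P_{i,j}$, an isomorphism $\phi_i \colon P_{i,1}/N_{i,1} \xrightarrow{\sim} P_{i,2}/N_{i,2}$ (write $Q_i$ for this common quotient), and the identification $P_i = P_{i,1} \times_{Q_i} P_{i,2}$. Projecting the hypothesis $G = \langle P_i^G \rangle$ along $\pi_j$ gives $G_j = \langle P_{i,j}^{G_j} : 1 \leq i \leq r \rangle$; after discarding indices with $P_{i,j}$ trivial, the GPWIC assumed for $G_j$ produces a connected $G_j$-Galois cover $Y_j \to \bb{P}^1$ \'etale away from $B$ with $P_{i,j}$ as an inertia group above $x_i$.

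Feed these covers into Lemma \ref{lem_common_cover_for_product}, using the normal subgroups $N_{i,j}$ and common quotients $Q_i$ (identified via $\phi_i$), to obtain connected $G_j$-Galois covers $Z_j \to \bb{P}^1$ \'etale away from $B$ with $P_{i,j}$ as an inertia group above $x_i$, plus, for each $i$, points $z_{i,j} \in Z_j$ above $x_i$ with $K_{Z_1, z_{i,1}}^{N_{i,1}}/K_{\bb{P}^1, x_i}$ and $K_{Z_2, z_{i,2}}^{N_{i,2}}/K_{\bb{P}^1, x_i}$ isomorphic as $Q_i$-Galois extensions. Let $Z$ be the normalization of $\bb{P}^1$ in the compositum $K(Z_1) \cdot K(Z_2)$ taken in a fixed algebraic closure of $K(\bb{P}^1)$. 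Since any non-trivial subextension of $K(Z_1) \cap K(Z_2)$ over $K(\bb{P}^1)$ would give a non-trivial common quotient of $G_1$ and $G_2$, our hypothesis forces $K(Z_1) \cap K(Z_2) = K(\bb{P}^1)$, so the compositum is a field and $Z$ is a connected $G$-Galois cover of $\bb{P}^1$ \'etale away from $B$.

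For the inertia at $x_i$, embed $K_{Z_1, z_{i,1}}$ and $K_{Z_2, z_{i,2}}$ into a common algebraic closure of $K_{\bb{P}^1, x_i}$ so that their intersection is precisely the shared $Q_i$-subextension supplied by the lemma. The compositum of these two local fields is then a field with Galois group $P_{i,1} \times_{Q_i} P_{i,2} = P_i$, and it appears as the fraction field of the completed local ring of $Z$ at a suitable point $z$ above $(z_{i,1}, z_{i,2})$, so $I_z = P_i$. The main obstacle is this final local compatibility: without the $Q_i$-Galois isomorphism provided by Lemma \ref{lem_common_cover_for_product}, the compositum would in general carry an inertia group as large as the full product $P_{i,1} \times P_{i,2}$ instead of the intended fibre product $P_i$. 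Everything else is Goursat bookkeeping together with the observation that linearly disjoint function fields yield a connected fibre product.
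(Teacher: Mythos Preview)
Your argument follows the same architecture as the paper's proof: Goursat's lemma, GPWIC on each factor, Lemma~\ref{lem_common_cover_for_product} for local compatibility, and then the fibre product. There is, however, a genuine gap in the final local step.

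You write that one can ``embed $K_{Z_1,z_{i,1}}$ and $K_{Z_2,z_{i,2}}$ into a common algebraic closure of $K_{\bb{P}^1,x_i}$ so that their intersection is precisely the shared $Q_i$-subextension.'' But once you have built the global cover $Z$ as the normalization of $\bb{P}^1$ in $K(Z_1)\cdot K(Z_2)$, the local embeddings are no longer yours to choose: they are dictated by picking a point $z\in Z$ over $x_i$, and all such choices give conjugate inertia groups. Lemma~\ref{lem_common_cover_for_product} guarantees only that the $Q_i$-subextensions coincide, so the local intersection \emph{contains} that subfield; it does not prevent the intersection from being strictly larger. If it is, the inertia group you obtain is $P_{i,1}\times_{Q'_i}P_{i,2}$ for some common quotient $Q'_i$ of $P_{i,1},P_{i,2}$ having $Q_i$ as a quotient, hence a proper subgroup of $P_i=P_{i,1}\times_{Q_i}P_{i,2}$.

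The paper closes this gap with an extra reduction you omitted: since $P_{i,1}\times_{Q'_i}P_{i,2}\subset P_{i,1}\times_{Q_i}P_{i,2}$ whenever $Q_i$ is a quotient of $Q'_i$, Harbater's result \cite[Theorem~2]{2} (which allows enlarging the wild part of an inertia group) lets one assume from the outset that $Q_i$ is a \emph{maximal} common quotient of $P_{i,1}$ and $P_{i,2}$. Under that maximality, any strictly larger local intersection would produce a strictly larger common quotient of $P_{i,1}$ and $P_{i,2}$, which is impossible; hence the two local fields really are linearly disjoint over the $Q_i$-subextension, and the inertia group is exactly $P_i$. Inserting this reduction repairs your argument and brings it in line with the paper's.
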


\begin{proof}
Set $G \coloneqq G_1 \times G_2$ and let $\pi_j \colon G \surj G_j$ denote the projections for $j \in \{1,2\}$. Let $r \geq 1$, $P_1$, $\cdots$, $P_r$ be non-trivial $p$-subgroups of $G$ such that $G = \langle P_1^G, \cdots, P_r^G \rangle$. Then for each $1 \leq i \leq r$, both of the groups $\pi_1(P_i)$ and $\pi_2(P_i)$ cannot be trivial. For $1 \leq i \leq r$, by the Goursat's lemma, there is $p$-group $Q_i$ (possibly trivial), normal subgroups $N_{j,i}$ of $\pi_j(P_i)$ such that $\pi_j(P_i)/N_{j,i} \cong Q_i$ and $P_i \cong \pi_1(P_i) \times_{Q_i} \pi_2(P_i)$. Let $Q'_i$ be a common quotient of $\pi_1(P_i)$ and $\pi_2(P_i)$ such that $Q_i$ is a quotient of $Q'_i$. Then $\pi_1(P_i) \times_{Q'_i} \pi_2(P_i) \subset \pi_1(P_i) \times_{Q_i} \pi_2(P_i)$ and so by \cite[Theorem 2]{2}, it is enough to consider $Q_i$ to be a maximal common quotient of $\pi_1(P_i)$ and $\pi_2(P_i)$. Now $G_j = \langle \pi_j(P_1)^{G_j}, \cdots, \pi_j(P_r)^{G_j} \rangle$. Let $B = \{x_1, \cdots, x_r\}$ be a sets of closed points in $\bb{P}^1$. By the hypothesis, for $j=1$, $2$, there is a connected $G_j$-Galois cover $f_j \colon Y_j \to \bb{P}^1$ \'{e}tale away from  $B$ such that $\pi_j(P_i)$ occurs as an inertia group above the point $x_i \in B$. By Lemma \ref{lem_common_cover_for_product}, for $j=1$ and $2$, there is a connected $G_j$-Galois cover $Z_j \to \bb{P}^1$ \'{e}tale away from $B$ such that $\pi_j(P_i)$ occurs as an inertia group above $x_i$, $1 \leq i \leq r$, and such that there is a point $z_{j,i} \in Z_j$ over $x_i$ with $K_{Z_1,z_{1,i}}^{N_{1,i}}/K_{\bb{P}^1,x_i} \cong K_{Z_2,z_{2,i}}^{N_{2,i}}/K_{\bb{P}^1,x_i}$ as $Q_i$-Galois extensions. Since $Q_i$ is a maximal common quotient of $\pi_1(P_i)$ and $\pi_2(P_i)$, the extensions $K_{Z_1,z_{1,i}}/K_{\bb{P}^1,x_i}$ and $K_{Z_2,z_{2,i}}/K_{\bb{P}^1,x_i}$ are linearly disjoint over $K_{Z_1,z_{1,i}}^{N_{1,i}} \cong K_{Z_2,z_{2,i}}^{N_{2,i}} /K_{\bb{P}^1,x_i}$. Let $W$ be a dominant connected component of the normalization of $Z_1 \times_{\bb{P}^1} Z_2$. Since $G_1$ and $G_2$ have no common non-trivial quotient in common, the cover $\Psi \colon W \to \bb{P}^1$ is a connected $G_1 \times G_2$-Galois cover. For a point $w = (z'_1, z'_2)$ in $W$ with $f_j(z'_j)=x$ the extension $K_{W,w}/K_{\bb{P}^1,x}$ is the compositum of the extension $K_{Z_1,z'_1}/K_{\bb{P}^1,x}$ with the extension $K_{Z_2,z'_2}/K_{\bb{P}^1,x}$. So the cover $\Psi$ is \'{e}tale away from $B$ and $\pi_1(P_i) \times_{Q_i} \pi_2(P_2) = P_i$ occurs as an inertia group above $x_i$.
\end{proof}

\begin{remark}\label{rmk_pdt_with_p-grp}
The above theorem generalizes \cite[Corollary 4.6]{15} where the result was proved for a perfect group $G_1$ and a $p$-group $G_2$.
\end{remark}

In the following we summarize the results of this section.

\begin{corollary}\label{cor_all}
The GPWIC (Conjecture \ref{conj_GPWIC}) is true for the following quasi $p$-groups $G$.
\begin{enumerate}
\item $G$ is a $p$-group;
\item $G$ has order strictly divisible by $p$;
\item $G = G_1 \times \cdots \times G_u$ where each $G_i$ is either a simple Alternating group of degree $d \geq p$, where $d=p$ or $(d,p)=1$ or $PSL_2(p)$ or a $p$-group or a simple non-abelian group of order strictly divisible by $p$.
\end{enumerate}
\end{corollary}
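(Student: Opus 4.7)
The plan is to combine the three substantive results of this section with the product theorem. Parts (1) and (2) are immediate applications of Theorem \ref{thm_p-grp} and Theorem \ref{thm_strictly_div} respectively, so the only real content is part (3).

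For part (3), I would first reorganize the factors of $G = G_1 \times \cdots \times G_u$ by type. Let $N$ denote the subproduct of those $G_i$ that are non-abelian, that is, the simple Alternating groups $A_d$ with $d \geq p$ and ($d=p$ or $(d,p)=1$), the group $PSL_2(p)$, and the simple non-abelian groups of order strictly divisible by $p$; let $P$ denote the subproduct of those $G_i$ that are $p$-groups. Then $G \cong N \times P$, and the strategy is to prove the GPWIC for each of $N$ and $P$ separately and then glue using Theorem \ref{thm_pdt_arbit_groups}.

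Next I would check that $N$ satisfies the hypotheses of Theorem \ref{thm_GPWIC_pdt_small_order}. Each factor of $N$ falls into one of the two permissible categories there. Namely, $A_p$ and $PSL_2(p)$ are non-abelian simple quasi $p$-groups of order strictly divisible by $p$ (the latter having order $p(p^2-1)/\gcd(2,p-1)$, in which $p$ appears to the first power only), and the explicitly listed simple non-abelian groups of order strictly divisible by $p$ are allowed directly. The remaining factors of $N$ are simple Alternating groups of degree $d > p$ with $(d,p)=1$, which is exactly the alternating-group case permitted by Theorem \ref{thm_GPWIC_pdt_small_order}. Thus the GPWIC holds for $N$. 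For $P$, which is a $p$-group, Theorem \ref{thm_p-grp} gives the GPWIC directly.

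Finally I would verify the no-common-quotient hypothesis of Theorem \ref{thm_pdt_arbit_groups}. Since $N$ is a direct product of non-abelian simple groups, each of its non-trivial quotients is itself a product of a non-empty subfamily of those simple factors, and hence contains a non-abelian simple direct factor; in particular, any non-trivial quotient of $N$ is non-solvable. On the other hand every quotient of the $p$-group $P$ is a $p$-group, hence solvable. Therefore the only common quotient of $N$ and $P$ is the trivial group, and Theorem \ref{thm_pdt_arbit_groups} produces the desired $G$-Galois cover. No step presents a serious obstacle; the argument is essentially a bookkeeping combination of Theorems \ref{thm_p-grp}, \ref{thm_strictly_div}, \ref{thm_GPWIC_pdt_small_order}, and \ref{thm_pdt_arbit_groups}.
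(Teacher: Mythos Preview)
Your proposal is correct and matches the paper's intent: the paper gives no explicit proof of Corollary~\ref{cor_all}, presenting it simply as a summary of the section's results, and your argument is precisely the intended combination of Theorems~\ref{thm_p-grp}, \ref{thm_strictly_div}, \ref{thm_GPWIC_pdt_small_order}, and~\ref{thm_pdt_arbit_groups}. The only minor point worth making explicit is the degenerate case where either $N$ or $P$ is trivial, in which case one of Theorems~\ref{thm_GPWIC_pdt_small_order} or~\ref{thm_p-grp} already gives the conclusion without invoking Theorem~\ref{thm_pdt_arbit_groups}.
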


\section{Weaker results to the GPWIC}\label{sec_GPWIC_weak}
In this section we will see some weaker results to the GPWIC. Namely, when we allow the branch locus sufficiently large or if we allow bigger inertia groups, there are suitable covers with the prescribed ramification. We also show that when the group is $A_d$ it is enough to add only one more branch point. Using \cite[Theorem 2]{2} one can increase the wild part of the inertia groups of a cover. In particular, let $G$ be a quasi $p$-group, $P_1$, $\cdots$, $P_r$ be $p$-subgroups of $G$. Let $Q_i$ be a Sylow $p$-subgroup of $G$ containing $P_i$. Let $B = \{x_1,\cdots,x_r\}$ be a set of closed points of $\bb{P}^1$. Then there is a connected $G$-Galois cover of $\bb{P}^1$ \'{e}tale away from $B$ such that $Q_i$ occurs as an inertia group above $x_i$ for $1 \leq i \leq r$. In the following we show that the inertia groups can be taken as the Sylow $p$-subgroups of the normal quasi $p$-groups $\langle P_i^G \rangle$.

\begin{proposition}\label{prop_weaker_inertia}
Under the notation and hypothesis of Conjecture \ref{conj_GPWIC}, for each $i$, there is a $p$-subgroup $Q_i \supset P_i$ in $\langle P_i^G \rangle$ such that there is a connected $G$-Galois cover of $\bb{P}^1$ \'{e}tale away from $B$ and $Q_i$ occurs as an inertia group above the point $x_i$ for $1 \leq i \leq r$.
\end{proposition}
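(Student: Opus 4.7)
The plan is to replace each $P_i$ by a Sylow $p$-subgroup of the normal quasi $p$-subgroup $N_i \coloneqq \langle P_i^G \rangle$ generated by the $G$-conjugates of $P_i$, then patch the resulting single-branched covers together using Harbater's Formal Patching.

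First I would observe that for each $i$, the subgroup $N_i = \langle P_i^G \rangle$ is normal in $G$ (since it is closed under $G$-conjugation by construction) and is generated by $p$-subgroups (the $G$-conjugates of $P_i$), hence is a normal quasi $p$-subgroup of $G$. Choose a Sylow $p$-subgroup $Q_i$ of $N_i$ containing $P_i$ (possible since $P_i$ is a $p$-subgroup of $N_i$). Because $N_i$ is a quasi $p$-group, we have $N_i = \langle Q_i^{N_i} \rangle$. Moreover, since each $N_i$ contains $P_i$, the hypothesis $G = \langle P_1^G , \ldots , P_r^G \rangle$ gives $G = \langle N_1, \ldots , N_r \rangle$.

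Next, I would apply Harbater's theorem on the Purely Wild Inertia Conjecture for Sylow $p$-subgroups (recalled in the Introduction) to each pair $(N_i, Q_i)$: this produces a connected $N_i$-Galois cover of $\bb{P}^1$ branched only above $\infty$ with $Q_i$ occurring as an inertia group above $\infty$. Composing with an automorphism of $\bb{P}^1$, I may assume the sole branch point is $x_i$, so for each $i$ there is a connected $N_i$-Galois cover of $\bb{P}^1$ \'{e}tale away from $\{x_i\}$ with $Q_i$ as an inertia group above $x_i$.

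Finally, since $G = \langle N_1 , \ldots, N_r \rangle$ and the branch loci $\{x_i\}$ are pairwise disjoint, I would combine the $r$ covers above into a single connected $G$-Galois cover of $\bb{P}^1$ \'{e}tale away from $B$ with $Q_i$ as an inertia group above $x_i$ by iteratively applying Harbater's Formal Patching (the cited reference [2, Theorem 2]); equivalently, one may apply Theorem \ref{thm_patching_covers} with $a=1$ in an $r$-step induction on the number of points, taking at each stage $G_1$ to be the group generated by the first several $N_i$'s and $G_2 = N_{i+1}$, using that the newly added branch locus can be arranged to fall on the next point $x_{i+1}$. The main subtlety, and the only real thing to verify, is that the iterative patching introduces no extra branch points, which follows from the fact that in each step the cover being patched in is branched only at the single point that is being added to the branch locus, so the inertia groups realized in earlier stages are preserved.
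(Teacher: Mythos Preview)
Your proposal is correct and follows essentially the same route as the paper: take $Q_i$ to be a Sylow $p$-subgroup of $N_i=\langle P_i^G\rangle$ containing $P_i$, realize each pair $(N_i,Q_i)$ via Harbater's result on Sylow inertia, and then patch inductively via Theorem~\ref{thm_patching_covers} with $a=1$. The only cosmetic difference is that the paper separates off the case $\langle N_1,\ldots,N_{r-1}\rangle=G$ and handles it directly with \cite[Theorem~2]{2} (adding the branch point $x_r$ with inertia $Q_r$), whereas you fold this into the same patching step; also note that \cite[Theorem~2]{2} is the ``increase the wild inertia / add a branch point'' result rather than the two-cover patching theorem, so your parenthetical identification of it with Theorem~\ref{thm_patching_covers} is a slight misattribution, though it does not affect the argument.
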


\begin{proof}
We proceed by induction on $r$. When $r=1$, it is the consequence of \cite[Theorem 2]{2}. So let $r \geq 2$. For $1 \leq i \leq r$, let $Q_i$ be a Sylow $p$-subgroup of $H_i = \langle P_i^G \rangle$ containing $P_i$. Since $H_i$ is a quasi $p$-group, $H_i = \langle Q_i^{H_i} \rangle$ and by the $r=1$ case, the pair $(H_i,Q_i)$ is realizable. By the induction hypothesis there is a connected $G_1 \coloneqq \langle H_1, \cdots, H_{r-1} \rangle$-Galois cover of $\bb{P}^1$ \'{e}tale away from $\{x_1,\cdots,x_{r-1}\}$ and $Q_i$ occurs as an inertia group above $x_i$. If $G_1 = G$, apply \cite[Theorem 2]{2}. Otherwise the result follows by Theorem \ref{thm_patching_covers} with $G_2 = H_r$.
\end{proof}

\begin{proposition}\label{prop_weaker_branch_locus}
Assume that the hypothesis and notation of Conjecture \ref{conj_GPWIC} hold. For each $1 \leq i \leq r$, let $C_i \coloneqq \{P_{i,j}\}_{1 \leq j \leq t_i}$ be the set of all conjugates of $P_i$ in $G$. Let $\emptyset \neq A_i \subset C_i$ such that $G = \langle P | P \in A_i, 1 \leq i \leq r \rangle$. Let $l \coloneqq \Sigma_i |A_i|$ and let $B = \{x_{i,j}| 1 \leq i \leq r, P_{i,j} \in A_i\}$ be a set of $l$ closed points in $\bb{P}^1$ such that $x_{i,1} = x_i$ for each $i$. Then there is a connected $G$-Galois cover of $\bb{P}^1$ \'{e}tale away from $B$ and $P_i$ occurs as an inertia group above the point $x_{i,j} \in B$.
\end{proposition}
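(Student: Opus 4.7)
The plan is to strip away the partition by conjugacy classes and induct on the total number $l = \sum_i |A_i|$. Since inertia groups in a $G$-Galois cover are determined only up to $G$-conjugation, realizing some $G$-conjugate of $P_i$ above a point is equivalent to realizing $P_i$ itself, so it suffices to prove the following cleaner statement (from which the proposition follows upon enumerating $\bigcup_i A_i$ as $\{Q_1, \ldots, Q_l\}$ with correspondingly named points): if $G$ is a finite quasi $p$-group generated by non-trivial $p$-subgroups $Q_1, \ldots, Q_l$ and $y_1, \ldots, y_l$ are distinct closed points of $\bb{P}^1$, then there is a connected $G$-Galois cover of $\bb{P}^1$ \'{e}tale away from $\{y_1, \ldots, y_l\}$ with $Q_m$ as inertia above $y_m$ for each $m$.

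I argue this auxiliary statement by induction on $l$. In the base case $l = 1$ one has $G = Q_1$, a $p$-group, and the required cover is furnished by the Harbater-Katz-Gabber cover of $\bb{P}^1$ \'{e}tale away from $\{y_1\}$ with $G$ totally ramified above $y_1$; equivalently, by Abhyankar's conjecture for $\bb{A}^1$ applied to the $p$-group $G$, composed with an automorphism of $\bb{P}^1$ taking $\infty$ to $y_1$. For the inductive step $l \geq 2$, peel off $Q_l$ and set $G' \coloneqq \langle Q_1, \ldots, Q_{l-1}\rangle$; this is itself a quasi $p$-group since it is generated by $p$-subgroups. The induction hypothesis applied to $G'$ and the generators $Q_1, \ldots, Q_{l-1}$ produces a connected $G'$-Galois cover $\phi_1 \colon Y_1 \to \bb{P}^1$ \'{e}tale away from $\{y_1, \ldots, y_{l-1}\}$ with $Q_m$ as inertia above $y_m$ for $1 \leq m \leq l-1$. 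Separately, a Harbater-Katz-Gabber cover provides a connected $Q_l$-Galois cover $\phi_2 \colon Y_2 \to \bb{P}^1$ \'{e}tale away from $\{y_l\}$ with $Q_l$ as inertia there.

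Since $G = \langle G', Q_l \rangle$, I then combine $\phi_1$ and $\phi_2$ using Harbater's two-point patching theorem (Theorem \ref{thm_patching_covers}) with the trivial common tame element $a = 1 \in G' \cap Q_l$. The mechanism is to declare $y_l$ a formally added branch point of $\phi_1$ with trivial inertia $\langle 1 \rangle$, which is harmless because $\phi_1$ is already \'{e}tale there, and similarly to decorate $\phi_2$ with an auxiliary trivial-inertia point. The patching theorem then yields a connected $\langle G', Q_l \rangle = G$-Galois cover of $\bb{P}^1$ \'{e}tale away from $\{y_1, \ldots, y_l\}$ in which the inertias at $y_1, \ldots, y_{l-1}$ are unchanged and $Q_l$ arises as inertia above $y_l$, completing the induction.

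The main technical matter is verifying that the patching step with $a = 1$ merges $\phi_1$ and $\phi_2$ into a cover ramified precisely over $\{y_1, \ldots, y_l\}$ without introducing extra branch points, and that the new branch point contributed by $\phi_2$ can be aligned with $y_l$. Both points rely on the freedom in the patching construction to position the new branch points, combined with the abundance of automorphisms of $\bb{P}^1$; modulo this bookkeeping the proof reduces cleanly to an inductive application of formal patching built on HKG realizations for $p$-groups.
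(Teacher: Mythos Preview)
Your proof is correct and takes essentially the same approach as the paper: both reduce via the conjugacy-of-inertia observation to realizing a given list of $p$-subgroups generating $G$ at prescribed points, then induct on the length $l$ of the list, peeling off one $p$-group at a time using the realizability of $(Q,Q)$ for a $p$-group $Q$ and patching via Theorem~\ref{thm_patching_covers} with the trivial common element $a=1$. The paper's write-up is terser (it does not isolate your ``cleaner statement'' explicitly) but is structurally identical, and your caveat about positioning the new branch point is handled in the paper in exactly the same implicit way.
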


\begin{proof}
Since the inertia groups above a points in a connected Galois cover are conjugates, it is enough to prove that there is a connected $G$-Galois cover of $\bb{P}^1$ \'{e}tale away from $B$ such that $P_{i,j}$ occurs as an inertia group above the point $x_{i,j} \in B$. We proceed via induction on $l$. If $l=1$ then $G = P_1$ and the result follows. Let $l \geq 2$. Fix an $i_0$, $1 \leq i_0 \leq r$, and element $P_{i_0,j_0}$ in $A_{i_0}$. Then the pair $(P_{i_0,j_0},P_{i_0,j_0})$ is realizable. By the induction hypothesis, we may assume that there is a connected $G_1 \coloneqq \langle \{P_{i,j}| 1 \leq i \leq r, j \in A_i\} - \{P_{i_0,j_0}\} \rangle$-Galois cover of $\bb{P}^1$ \'{e}tale away from $B - \{x_{i_0,j_0}\}$ and $P_{i,j}$ occurs as an inertia group above the point $x_{i,j} \in B$ for $(i,j) \neq (i_0,j_0)$. Now use Theorem \ref{thm_patching_covers} with $G_2 = P_{i_0,j_0}$.
\end{proof}

By the above proposition , if we allow enough number of branch points we can obtain covers with the desired purely wild ramification. By \cite[Corollary 5.5]{DK}, for $d=p$ or $d$ is coprime to $p$, the GPWIC holds for $A_d$. So assume that $a \geq 2$ and $d = ap$. The following result shows that in this case we only need one extra branched point.

\begin{proposition}\label{prop_A_d_pdt_cycle}
Let $p$ be an odd prime, $a \geq 2$ be an integer, $d = a p$. Let $r \geq 1$ be an integer and $P_1$, $\cdots$, $P_r$ be non-trivial $p$-subgroups of $A_d$. Let $B = \{x_1,\cdots,x_r\}$ be a set of closed points in $\bb{P}^1$ and let $x_0 \in \bb{P}^1$ be a closed point outside $B$. Fix $1 \leq i_0 \leq r$. Then there is a connected $A_d$-Galois cover of $\bb{P}^1$ \'{e}tale away from $B \sqcup \{x_0\}$ such that $P_i$ occurs as an inertia group above $x_i$ and $P_{i_0}$ occurs as an inertia group above $x_0$.
\end{proposition}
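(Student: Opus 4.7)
The plan is to reduce the claim to Proposition~\ref{prop_weaker_branch_locus}. We shall apply that proposition with $A_{i_0} \coloneqq \{P_{i_0}, P_{i_0}^g\}$ for a suitable $g \in A_d$ with $P_{i_0}^g \neq P_{i_0}$, and $A_i \coloneqq \{P_i\}$ for every $i \neq i_0$. Then $\ell \coloneqq \sum_i |A_i| = r+1 = |B \sqcup \{x_0\}|$. Matching $x_{i_0,1} = x_{i_0}$ and $x_{i_0,2} = x_0$, the output of Proposition~\ref{prop_weaker_branch_locus} is a connected $A_d$-Galois cover \'{e}tale away from $B \sqcup \{x_0\}$ in which a conjugate of $P_i$ appears as an inertia group above $x_i$ and a conjugate of $P_{i_0}$ appears above $x_0$; since inertia groups above a branch point form a single $A_d$-conjugacy class, this is exactly the conclusion we want.

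The only non-trivial step is to arrange the generating hypothesis of Proposition~\ref{prop_weaker_branch_locus}, namely to choose $g$ so that $A_d = \langle P_1, \ldots, P_r, P_{i_0}^g \rangle$. Since $P_{i_0}$ is a non-trivial $p$-subgroup, fix any non-identity element $\sigma \in P_{i_0}$; note $\sigma$ has $p$-power order, so its non-trivial powers are products of disjoint $p$-cycles in $A_d$. As $d = ap \geq 2p \geq 6$, the group $A_d$ is non-abelian simple, and a standard two-conjugate generation statement for $A_d$ produces some $g \in A_d$ with $\langle \sigma, \sigma^g \rangle = A_d$; in our setting this can be verified in an elementary way using Jones' theorem on primitive subgroups of $S_d$ containing an element of prime-power order (cited also in \cite[Theorem 1.2, Remark 1.6]{Jones} and applied throughout Section~\ref{sec_explicit_eq}), by choosing $g$ so that the supports of a suitable $p$-power of $\sigma$ and of its conjugate are in sufficiently generic position. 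For such $g$,
$$ \langle P_1, \ldots, P_r, P_{i_0}^g \rangle \;\supseteq\; \langle P_{i_0}, P_{i_0}^g \rangle \;\supseteq\; \langle \sigma, \sigma^g \rangle \;=\; A_d, $$
so the hypothesis of Proposition~\ref{prop_weaker_branch_locus} is satisfied, which concludes the argument.

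The main obstacle is thus the group-theoretic input that two conjugates of some non-identity element of $A_d$ generate all of $A_d$; once this is granted, the topological content is entirely encapsulated in Proposition~\ref{prop_weaker_branch_locus}. The hypothesis $a \geq 2$ plays no direct role in this plan; it only delineates the interesting regime, since for $d = p$ or $\gcd(d,p) = 1$ the Generalized Purely Wild Inertia Conjecture itself is already established by Corollary~\ref{cor_all}, which in fact yields the stronger conclusion with only $r$ branch points and no need for the extra point $x_0$.
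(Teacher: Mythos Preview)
Your reduction to Proposition~\ref{prop_weaker_branch_locus} is clean, but the group-theoretic input you invoke is false as stated, and this breaks the argument in cases that the proposition must cover. The claim that for every non-identity $\sigma \in A_d$ there exists $g$ with $\langle \sigma, \sigma^g\rangle = A_d$ fails whenever $|\Supp(\sigma)| < d/2$: any two conjugates of $\sigma$ then have combined support of size at most $2|\Supp(\sigma)| < d$, so $\langle \sigma,\sigma^g\rangle$ is not even transitive. Concretely, take $r=1$, $a \geq 3$, and $P_1 = \langle (1,\ldots,p)\rangle$. Every non-identity element of $P_1$ is a single $p$-cycle, and no two $p$-cycles can generate $A_{ap}$ when $a\geq 3$. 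Hence the generation hypothesis $A_d = \langle P_1, P_1^g\rangle$ cannot be met, and since enlarging $A_{i_0}$ in Proposition~\ref{prop_weaker_branch_locus} would force more than $r+1$ branch points, the route through that proposition with only one extra conjugate is genuinely blocked. Jones' theorem does not help here: it concerns primitive groups containing a cycle, not products of several disjoint $p$-cycles, and in any case the obstruction above is at the level of transitivity.

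The paper's proof circumvents this obstruction in two steps that are absent from your plan. First, using \cite[Theorem~2]{2} and \cite[Corollary~5.6]{DK} it reduces to $r=1$ with $P_1 = \langle \tau\rangle$ where $\tau$ is a product of exactly $a$ disjoint $p$-cycles (full support). Second, and crucially, it does \emph{not} try to generate $A_d$ by two conjugates of $\langle\tau\rangle$; instead it realizes $\langle\tau\rangle$ (respectively a conjugate $\langle\sigma\rangle$) as the inertia group for covers with the much larger groups $G_1 \cong A_p^a$ and $G_2 \cong A_p^a$ acting on two overlapping block decompositions of $\{1,\ldots,d\}$, via Theorem~\ref{thm_GPWIC_pdt_small_order}. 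One then checks $\langle G_1,G_2\rangle = A_d$ (using that a $3$-cycle lies in $G_1$ and invoking \cite[Lemma~4.4.4]{TGT}) and patches with Theorem~\ref{thm_patching_covers}. The extra leverage comes precisely from passing through $G_1$ and $G_2$ rather than through conjugates of $P_{i_0}$.
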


\begin{proof}
We may assume that $i_0 = 1$. By \cite[Theorem 2]{2} it is enough to consider the case $r=1$ and when $P_1 = \langle \tau \rangle$ for an element $\tau$ of order $p$. Without lose of generality we may assume that $\tau = \tau_1 \cdots \tau_v$ where $\tau_i$ is a the $p$-cycle $((i-1)p+1,\cdots,ip)$, $1 \leq i \leq v$. By \cite[Corollary 5.6]{DK} we may assume that $v = a$.

For $1 \leq i \leq a$, set $H_{i1}\coloneqq \tx{Alt}(\{(i-1)p+1,\cdots,ip\})$. For $1 \leq j \leq a-1$ set $H_{j2} \coloneqq \tx{Alt}(\{(j-1)p+2, \cdots, jp+1\})$ and let $H_{a2} \coloneqq \tx{Alt}(\{(a-1)p+2,\cdots,ap,1\})$. For $i=1$, $2$, set $G_i \coloneqq H_{1i} \times \cdots H_{ai} \subset A_d$. For $1 \leq j \leq a$, consider the $p$-cycle $\sigma_j$ given by $\sigma_j \coloneqq ((j-1)p+2,\cdots,jp+1)$ for $1 \leq j \leq a-1$, $\sigma_a \coloneqq ((a-1)p+2, \cdots, ap, 1)$. Consider the element $\sigma \coloneqq \sigma_1 \cdots \sigma_a$ in $A_d$ of order $p$. Now by Theorem \ref{thm_GPWIC_pdt_small_order} the pairs $(G_1, \langle (\tau_1,\cdots, \tau_a) \rangle)$ and $(G_2, \langle (\sigma_1,\cdots,\sigma_a) \rangle)$ are realizable. Set $G \coloneqq \langle G_1, G_2 \rangle \subset A_d$. Since each $H_{ij} $ are generated by $p$-cycles, so is $G$. Also the $3$-cycle $(1,2,3) \in H_{11}$ is contained in $G$. So by \cite[Lemma 4.4.4]{TGT} $G = A_d$. Since $\sigma$ is a conjugate of $\tau$ in $A_d$, by Theorem \ref{thm_patching_covers}, there is a connected $A_d$-Galois cover of $\bb{P}^1$ \'{e}tale away from $\{x_0,x_1\}$ such that $\langle \tau \rangle$ occurs as an inertia group over $x_0$ and over $x_1$.
\end{proof}

\section{Towards the general question}\label{sec_que_evidence}
In this section we see some evidence for $Q[r,X,B,G]$ (Question \ref{que_most_gen}). The following result is a consequence of the Formal Patching technique (Theorem \ref{thm_patching_covers}).

\begin{proposition}\label{prop_part_most_gen}
Let $r \geq 1$, $X$ be a smooth projective connected $k$-curve, $G$ be a finite group. Let $B = \{x_1, \cdots, x_r\} \subset X$ be a set of closed points in $X$. For $1\leq i \leq r$, let $I_i$ be a subgroup of $G$ which is an extension of a $p$-group $P_i$ by a cyclic group of order prime-to-$p$, and set $H \coloneqq \langle P_i^G | 1 \leq i \leq r \rangle$. Assume that $H$ has a complement $H'$ in $G$. Suppose that the following hold.
\begin{enumerate}
\item There is a connected $H'$-Galois \'{e}tale cover $\psi \colon Z \to X$.
\item There is a connected $H$-Galois cover of $\bb{P}^1$ \'{e}tale away from a set $\{\eta_1, \cdots, \eta_r\}$ of $r$-distinct points such that $I_i$ occurs as an inertia group above the point $\eta_i$ for $1 \leq i \leq r$.
\end{enumerate}
Then there is a connected $G$-Galois cover of $X$ \'{e}tale away from a set $B' = \{x'_1,\cdots,x'_r\}$ of closed points such that $I_i$ occurs as an inertia group above the point $x'_i$, $1 \leq i \leq r$. Moreover, we can choose an $i$ such that $x_i' = x_i$. Furthermore, if each $(H,I_i)$ is realizable, we can take $x'_i = x_i$ for all $1 \leq i \leq r$.
\end{proposition}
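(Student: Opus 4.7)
The plan is to reduce everything to a single (for the main statement) or iterated (for the furthermore part) application of Theorem \ref{thm_patching_covers} with $G_1 = H'$ and $G_2 = H$. The key preliminary observation is that since $H$ is normal in $G$ with complement $H'$, we have $G = H \rtimes H' = \langle H, H' \rangle$ and $H \cap H' = \{e\}$. This forces the only available ``glueing element'' to be $a = e$, which has order $1$ (trivially prime-to-$p$) and so is a legitimate choice for the patching theorem. Note also that hypothesis (2) immediately supplies the $H$-Galois cover of $\bb{P}^1$ we will patch in, and hypothesis (1) supplies the $H'$-Galois \'{e}tale cover on $X$ we will patch into.

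For the first two assertions I will fix any index $i_0 \in \{1,\ldots,r\}$, relabel the $\eta_i$ so that the inertia $I_{i_0}$ sits above $\eta_r$, and augment the branch locus of the cover in (2) by adjoining a single point $\eta_0 \in \bb{P}^1$ where it is unramified (so $J_0 = \{e\} = \langle a^{-1} \rangle$). I will also view $\psi$ as being ``branched'' over $B = \{x_{i_0}\}$ with trivial inertia $I_{x_{i_0}} = \{e\} = \langle a \rangle$. Theorem \ref{thm_patching_covers} then yields a connected $G$-Galois cover of $X$ \'{e}tale away from $\{x_{i_0}\} \sqcup B''$, with $I_{i_0} = J_r$ occurring above $x_{i_0}$ and each $I_j$ ($j \neq i_0$) occurring above some $x_j' \in B''$. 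Declaring $x_{i_0}' \coloneqq x_{i_0}$ proves the main conclusion and the ``we can choose an $i$'' clause simultaneously.

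For the furthermore statement, realizability of each pair $(H,I_i)$ gives connected $H$-Galois covers $g_i \colon W_i \to \bb{P}^1$ branched only at $\infty$ with inertia $I_i$ there. I will build the desired cover in $r$ steps, starting from $\phi_0 \coloneqq \psi$. At step $i$ I will apply Theorem \ref{thm_patching_covers} to $\phi_{i-1}$ (with Galois group $H'$ if $i=1$ and $G$ if $i \geq 2$) and $g_i$, taking $G_2 = H$, $a = e$, $x_0 = x_i$ (the cover $\phi_{i-1}$ being unramified at $x_i$ by induction), and augmenting the branch locus of $g_i$ by a single unramified point $\eta_0$. Crucially, since $g_i$ has only one genuine branch point, the patching theorem is invoked in its degenerate ``$r=1$'' instance, so the resulting set $B''$ of new $X$-branch points is empty: step $i$ simply appends $x_i$ (with inertia $I_i$) to the branch locus of $\phi_{i-1}$ while preserving all the previously placed inertia groups. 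After $r$ steps we reach the desired $G$-Galois cover \'{e}tale away from $B = \{x_1,\ldots,x_r\}$ with $I_i$ above $x_i$.

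The main obstacle is controlling the auxiliary point set $B''$ produced by Theorem \ref{thm_patching_covers}, since the theorem creates one new branch point on $X$ for every ``extra'' branch point of the $\bb{P}^1$-cover that was patched in. In the first part we cannot avoid this, which is why only one index $i_0$ can be aligned with a prescribed $x_{i_0}$. In the iterative part, the realizability hypothesis is precisely what is needed to make the incoming $\bb{P}^1$-cover minimally branched (at a single point), which in turn guarantees $B'' = \emptyset$ at every step and lets us place all $r$ inertia groups at the prescribed locations.
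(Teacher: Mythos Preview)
Your proposal is correct and follows essentially the same route as the paper: both arguments take $G_1 = H'$, $G_2 = H$, glue along the trivial element $a = e$, apply Theorem \ref{thm_patching_covers} once for the main statement, and then iterate it $r$ times (using the realizability of each $(H,I_i)$ to keep the auxiliary $\bb{P}^1$-cover one-point branched) for the furthermore clause. Your write-up is simply more explicit than the paper's about the bookkeeping---in particular the insertion of the unramified auxiliary point $\eta_0$ and the reason the set $B''$ is empty in each iterative step.
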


\begin{proof}
Since $\psi$ is an unramified cover, each $I_i \subset H$. By the hypothesis, $G/H$ is a subgroup of $G$ which together with $H$ generates $G$. By Theorem \ref{thm_patching_covers}, there is a connected $G$-Galois cover of $X$ \'{e}tale away from $B'$ such that $I_i$ occurs as an inertia group above the point $x'_i$ for $1 \leq i \leq r$ and we can choose one $1 \leq i \leq r$ such that $x_i' = x_i$. For the last assertion we use Theorem \ref{thm_patching_covers} inductively for $r$.
\end{proof}

\begin{remark}
Note that when $(|G/H|,p) =1$, by the Schur-Zassenhaus Theorem, the group $H$ always has a complement in $G$.
\end{remark}

The hypotheses of the above result assumes the existence of the unramified cover $\psi$, which are not well understood in general. But they are known when the Galois group either has order prime-to-$p$ or when the Galois group is a $p$-group. Using these structures we have the following result.

\begin{corollary}\label{cor_most_gen}
Under the hypotheses of Question \ref{que_most_gen} suppose that one of the following holds.
\begin{enumerate}
\item $(|G/H|,p)=1$ and the cover $\psi$ is an \'{e}tale $G/H$-Galois cover of $X$;
\item $G = H \rtimes H'$ for some $p$-group $H'$ of rank $s$ and $X$ has $p$-rank $\geq s$ and $\psi$ is an \'{e}tale $H'$-Galois cover of $X$.
\end{enumerate}
Assume that each $I_i \subset \langle P_i^G \rangle$ and each pair $(\langle P_i^G \rangle,I_i)$ is realizable. Then for any set $B =\{x_1, \cdots, x_r\}$ of closed points in $X$, there is a connected $G$-Galois cover of $X$ \'{e}tale away from $B$ such that $I_i$ occurs as an inertia group above the point $x_i$ for $1 \leq i \leq r$.
\end{corollary}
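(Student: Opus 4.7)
My plan is to verify the hypotheses of the ``furthermore'' clause of Proposition~\ref{prop_part_most_gen} in both cases and then invoke that proposition. The first step is to produce a complement $H'$ of $H$ in $G$: in case (1), the condition $(|G/H|,p)=1$ together with the normality of $H$ in $G$ and the Schur--Zassenhaus theorem (as in the remark following Proposition~\ref{prop_part_most_gen}) yields a complement $H' \cong G/H$; in case (2), the complement is supplied directly by the decomposition $G = H \rtimes H'$. In either case the cover $\psi$ of the corollary's hypothesis then furnishes the étale $H'$-Galois cover of $X$ demanded by Proposition~\ref{prop_part_most_gen}(1); in case (2) the existence of such a cover is guaranteed by the $p$-rank condition on $X$ together with the rank-$s$ hypothesis on the $p$-group $H'$.

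The second and central step is to upgrade the assumed realizability of each pair $(\langle P_i^G\rangle, I_i)$ to realizability of each pair $(H, I_i)$. Since $\langle P_i^G\rangle$ is a normal quasi-$p$-subgroup of the quasi-$p$-group $H$, and $H = \langle \langle P_1^G\rangle, \ldots, \langle P_r^G\rangle \rangle$, I would iteratively adjoin the remaining $\langle P_j^G\rangle$ (for $j \neq i$) to the Galois group of the given $\langle P_i^G\rangle$-Galois cover of $\bb{P}^1$ branched only at $\infty$ with inertia $I_i$, using \cite[Theorem~2]{2} at each stage. This would produce an $H$-Galois cover of $\bb{P}^1$ branched only at $\infty$ with inertia $I_i$, confirming $(H, I_i)$ realizable; iteratively combining the $r$ such covers by Theorem~\ref{thm_patching_covers} also supplies the $H$-Galois cover of $\bb{P}^1$ branched at $r$ points with the $I_i$'s, fulfilling hypothesis (2) of Proposition~\ref{prop_part_most_gen}.

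With all hypotheses of the ``furthermore'' clause now in place --- a complement $H'$ of $H$ in $G$, an étale $H'$-Galois cover $\psi$ of $X$, and realizability of each $(H, I_i)$ --- the proposition (through an inductive application of Theorem~\ref{thm_patching_covers}) will deliver the desired connected $G$-Galois cover of $X$ étale away from the prescribed set $B$ with $I_i$ occurring as an inertia group above $x_i$ for every $1 \leq i \leq r$.

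The main technical obstacle will be the embedding step in the second paragraph: ensuring that the inertia $I_i$ at $\infty$ in the $\langle P_i^G\rangle$-Galois cover persists unchanged after enlarging the Galois group to $H$, since naive embedding theorems tend to inflate the inertia by the adjoined $p$-parts. The normality of each $\langle P_j^G\rangle$ in $G$ (hence in $H$) together with the containment $I_i \subset \langle P_i^G\rangle$ should supply the structural control required to carry out the embedding without introducing additional wild ramification at $\infty$.
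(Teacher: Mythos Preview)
Your overall strategy is sound, but the ``second and central step'' has a genuine gap. The result \cite[Theorem~2]{2} that you invoke does not enlarge the Galois group of a cover: it enlarges the \emph{inertia} within a fixed Galois group (this is exactly how the paper uses it, e.g.\ in the proofs of Proposition~\ref{prop_weaker_inertia} and Theorem~\ref{thm_GPWIC_pdt_small_order}). So it cannot take you from a $\langle P_i^G\rangle$-Galois cover of $\bb{P}^1$ branched only at $\infty$ with inertia $I_i$ to an $H$-Galois cover with the same branch locus and the same inertia. The embedding-type results available in the paper (Theorem~\ref{thm_embedding}, Corollary~\ref{cor_Raynaud_gen}) also fail here: the former forces the inertia to grow to $I_i P$ for a nontrivial $p$-group $P$, and the latter requires auxiliary realizable pairs $(G_j,I_j)$ with $I_j\subset I_i$ sharing the tame part of $I_i$, which one cannot produce inside $\langle P_j^G\rangle$ in general. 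In short, the intermediate claim that $(H,I_i)$ is realizable does not follow from the hypotheses by the tools you cite, and you correctly flag this as the obstacle without resolving it.

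The repair is to avoid this intermediate step entirely. Rather than feeding the ``furthermore'' clause of Proposition~\ref{prop_part_most_gen}, run its proof directly with the smaller groups: start from the \'etale $H'$-cover of $X$, and at the $i$-th stage apply Theorem~\ref{thm_patching_covers} to patch on the $(\langle P_i^G\rangle, I_i)$-cover (rather than an $(H,I_i)$-cover) at the point $x_i$, using the trivial element $a=1$ at an \'etale point. Each patching preserves the inertia at the previously treated points $x_1,\ldots,x_{i-1}$ and installs $I_i$ at $x_i$; after $r$ iterations the Galois group has grown to $\langle H',\langle P_1^G\rangle,\ldots,\langle P_r^G\rangle\rangle=\langle H',H\rangle=G$, which is exactly the desired conclusion. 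This is the argument implicit in the paper's presentation of the corollary as an immediate consequence of the proposition.
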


\begin{remark}
Note that the above results can be applied to the Question $Q[r,X,B,G]$ if there exists $0 \leq j \leq l$ such that $H_j$ is normal in $G$ which has a complement in $G$, the composite cover $\psi_j \circ \cdots \circ \psi_1$ is \'{e}tale and such that each pair $(\langle P_i^G \rangle,I_i)$ is realizable.
\end{remark}

Using the IC for the Alternating groups proved in Section \cref{sec_alt_IC} the above Corollary implies the following result.

\begin{corollary}\label{cor_gen_que_sym}
($Q[1,X,\{*\},S_d]$) Let $p$ be an odd prime and $X$ be any smooth projective $k$-curve of genus $\geq 1$. Then for $r = 1$, Question \ref{que_most_gen} has an affirmative answer for the group $S_p$ and when $p \equiv 2 \pmod{3}$ for the groups $S_{p+1}$, $S_{p+2}$, $S_{p+3}$, $S_{p+4}$.
\end{corollary}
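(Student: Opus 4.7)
My plan is to reduce $Q[1,X,\{x_1\},S_d]$ to the Inertia Conjecture for the Alternating group $A_d$, and then invoke Corollary \ref{cor_most_gen}(1).

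Fix $d \in \{p, p+1, p+2, p+3, p+4\}$. First I identify the subgroups $H_j$ of Notation \ref{not_general_set_up}. Since $p$ is odd every $p$-element of $S_d$ is an even permutation, so the non-trivial $p$-subgroup $P_1$ is contained in $A_d$; as $d \geq 5$, $A_d$ is simple, so the normal closure $H_1 = \langle P_1^{S_d} \rangle$ is a non-trivial normal subgroup of $A_d$ and hence equals $A_d$. Iterating gives $H_j = A_d$ for all $j \geq 1$, and therefore $l = 1$. The tower of Question \ref{que_most_gen} thus collapses to a single $(S_d/A_d)$-Galois cover $\psi_1\colon Y_1 \to X$, \'etale away from $x_1$, with ramification group $I_1/(I_1 \cap A_d)$ above $x_1$.

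Next, I would use Riemann-Hurwitz to force $\psi_1$ to be \'etale. Since $p$ is odd the ramification above $x_1$ is tame and the only options are $e=1$ or $e=2$, giving
\[
  2g(Y_1) - 2 \;=\; 2(2g(X) - 2) + \delta, \qquad \delta \in \{0,1\},
\]
where $\delta = 1$ in the totally ramified case. Parity forces $\delta = 0$, so $\psi_1$ is \'etale everywhere and $I_1 \subseteq A_d$; the existence of such a connected double cover is consistent with the hypothesis $g(X) \geq 1$, which is precisely where this assumption is used.

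At this point Corollary \ref{cor_most_gen}(1) applies with $H = A_d$, $G/H = \mathbb{Z}/2$ of order coprime to $p$, and the \'etale cover $\psi = \psi_1$. The only remaining hypothesis to verify is that the pair $(A_d, I_1)$ is realizable, i.e.\ the Inertia Conjecture for $A_d$. This is \cite[Theorem 1.2]{BP} for $d=p$, \cite[Theorem 1.2]{8} for $d=p+2$ under $p \equiv 2 \pmod{3}$, and Theorem \ref{intro_IC} (established in Section \ref{sec_alt_IC}) for $d \in \{p+1, p+3, p+4\}$ under $p \equiv 2 \pmod{3}$. Invoking Corollary \ref{cor_most_gen} then produces the desired connected $S_d$-Galois cover of $X$ \'etale away from $\{x_1\}$, dominating $\psi_1$, with $I_1$ as an inertia group above $x_1$. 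The substantive input is the Inertia Conjecture for the relevant Alternating groups, furnished by Theorem \ref{intro_IC}; the reduction via Riemann-Hurwitz and Corollary \ref{cor_most_gen} is essentially formal, the only mild difficulty being the bookkeeping that the $H$-quotient of the patched cover reproduces the given $\psi_1$.
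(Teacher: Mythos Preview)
Your proof is correct and follows essentially the same approach as the paper: identify $H_1=A_d$, use Riemann--Hurwitz to force the $\mathbb{Z}/2$-cover $\psi_1$ to be \'etale (hence $I_1\subset A_d$), and then apply Corollary~\ref{cor_most_gen}(1) together with the Inertia Conjecture for $A_d$ (\cite[Theorem~1.2]{BP}, \cite[Theorem~1.2]{8}, Theorems~\ref{thm_A_p+1}--\ref{thm_A_p+4}). The only difference is cosmetic: the paper cites the individual theorems of Section~\ref{sec_alt_IC} rather than Theorem~\ref{intro_IC}, and does not explicitly flag the dominance bookkeeping you mention.
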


\begin{proof}
Let $d = p$ or when $p \equiv 2 \pmod{3}$, $d \in \{p+1,p+2,p+3,p+4\}$. Set $G = S_d$. Let $x \in X$ be a closed point, $I \subset G$ be an extension of a $p$-group $P$ by a cyclic group of order prime-to-$p$. Then $\langle P^G \rangle = A_d$. Let $\psi \colon Y \to X$ be a connected $\bb{Z}/2$-Galois cover of $X$ \'{e}tale away from $x$. By the Riemann-Hurwitz formula, the ramification index above $x$ must be an odd integer. So $\psi$ is \'{e}tale everywhere and $I \subset A_d$. Now the result follows from Corollary \ref{cor_most_gen} applied to \cite[Theorem 1.2]{BP}, \cite[Theorem 1.2]{8} and Theorem \ref{thm_A_p+1}--Theorem \ref{thm_A_p+4}.
\end{proof}

Again using the IC for the Alternating groups proved in Section \cref{sec_alt_IC} together with Formal Patching technique we have the following result towards the GIC (Conjecture \ref{conj_proj_gen}).

\begin{proposition}\label{prop_GIC_A_d_small}
Let $p \geq 5$ be a prime number. Let $d = p$ or when $p \equiv 2 \pmod{3}$, $d \in \{p+1,p+2,p+3,p+4\}$. Let $r \geq 1$ be an integer. For $1 \leq i \leq r$, let $I_i$ be a subgroup of $A_d$ which is an extension of a $p$-group $P_i$ (possibly empty) by a cyclic group of order prime-to-$p$ such that $P_1$ is non-trivial, and if $P_i$ is trivial for some $i$, there is a $2 \leq j \leq r$, $j \neq i$, such that $I_i = I_j$. Then there exists a connected $A_d$-Galois cover of $\bb{P}^1$ \'{e}tale away from a set $B = \{x_1,\cdots,x_r\}$ of closed points in $\bb{P}^1$ such that $I_i$ occurs as an inertia group above $x_i$, $1 \leq i \leq r$. Moreover, if all the $I_i$ are equal whenever $P_i$ is trivial, the set $B$ can be chosen arbitrarily.
\end{proposition}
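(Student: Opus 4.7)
The plan is to prove the proposition by induction on $r$, with the Inertia Conjecture for $A_d$ as the base case and the Harbater patching theorem (Theorem \ref{thm_patching_covers}) as the engine for the inductive step. The IC for $A_d$ is known in each of the listed cases: for $A_p$ by \cite[Theorem 1.2]{BP}, for $A_{p+2}$ by \cite[Theorem 1.2]{8}, and for $A_{p+1}$, $A_{p+3}$, $A_{p+4}$ by Theorems \ref{thm_A_p+1}, \ref{thm_A_p+3}, \ref{thm_A_p+4} of the present paper. Thus the pair $(A_d,I)$ is realizable for every $I\subset A_d$ which is an extension of a non-trivial $p$-subgroup by a cyclic group of order prime to $p$, which settles the base case $r=1$.

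For the inductive step I would split into two cases according to $P_r$. When $P_r$ is non-trivial, first note that the hypotheses are preserved under dropping the index $r$: a trivial-$P$ index $i\leq r-1$ whose designated partner was $r$ would force $I_i=I_r$ tame, contradicting $P_r\neq 1$. The inductive hypothesis then yields an $A_d$-Galois cover $f\colon Y\to\bb{P}^1$ etale away from $r-1$ points realizing $I_1,\ldots,I_{r-1}$, and the IC gives an $A_d$-Galois cover $g\colon W\to\bb{P}^1$ etale away from a single point and realizing $I_r$ there. I would then apply Theorem \ref{thm_patching_covers} with $G_1=G_2=A_d$ and $a=1$, inserting trivial-inertia (formal) branch points $x_0\in B$ and $\eta_0\in B'$ so that the condition $\langle a\rangle=\langle a^{-1}\rangle=\{e\}$ is matched vacuously. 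The conclusion produces an $A_d$-Galois cover of $\bb{P}^1$ with exactly $r$ branch points and the prescribed inertias.

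When $P_r$ is trivial, $I_r$ is tame cyclic of some order $n$ coprime to $p$, and the hypothesis forces the trivial-$P$ indices to partition into equality classes of size at least two with common tame cyclic inertia. I would drop the entire equality class $\mathcal{C}$ containing $r$ (which preserves the proposition's hypotheses, since the partnership structure on the remaining trivial-$P$ indices is internal to the surviving classes), apply the inductive hypothesis to the remaining $r-|\mathcal{C}|$ indices, and re-introduce $|\mathcal{C}|$ tame branch points of common inertia $\mathbb{Z}/n$ by iterated patching against suitable tame cyclic covers of $\bb{P}^1$. Concretely, patching with an $[n]$-Kummer cover augmented by a formal branch point adds two tame $\mathbb{Z}/n$-branch points in one application of Theorem \ref{thm_patching_covers} with $a=1$, handling classes of even size directly; for an odd-size class the remaining single tame point is supplied by one further patching with a three-point tame cyclic cover (available whenever the units of $\mathbb{Z}/n$ admit a zero-sum triple) or, in the residual cases, by a two-point branched $A_d$-Galois cover from Proposition \ref{prop_ded_d_cover} in which the tame part supplies the common element $a$ and the wild part is absorbed into the inertia at $x_1$.

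For the moreover clause, the hypothesis that all tame $I_i$'s coincide reduces to a single equality class, and the branch set may be chosen arbitrarily: the wild IC-branch point is moved by a M\"obius automorphism of $\bb{P}^1$ to the designated $x_1$, and the Kummer-style patchings work with any prescribed location for the added points since the existence of the required tame covers does not depend on the branch points. The main technical obstacle is the residual case of Case 2 above, namely handling an odd-size tame equality class whose cyclic order $n$ forbids a direct tame realization; the hybrid tame-wild patching that uses the explicit two-point $A_d$-covers of Section \cref{sec_explicit_eq} is what bridges this parity gap, and verifying that the resulting combined cover remains $A_d$-Galois with exactly the desired inertias is the delicate bookkeeping step.
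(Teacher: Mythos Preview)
Your approach matches the paper's: use the known IC for $A_d$ (\cite[Theorem 1.2]{BP}, \cite[Theorem 1.2]{8}, and Theorems \ref{thm_A_p+1}--\ref{thm_A_p+4}) to realize each wild $I_i$ as a one-point branched $A_d$-cover of $\bb{P}^1$, use cyclic covers of $\bb{P}^1$ for the tame equality classes, and assemble everything via Theorem \ref{thm_patching_covers}. The paper is simply terser than you: rather than your induction on $r$ with a case split on $P_r$, it writes down, for each tame class $A_i=\{j\,:\,I_j=I_i\}$, a single connected $\langle\beta\rangle$-Galois Kummer cover of $\bb{P}^1$ totally ramified at all of the (at least two) points $\{x_j:j\in A_i\}$, and then patches these cyclic covers with the one-point IC covers in one inductive pass. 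Your Case~1 (dropping the wild index $r$ and patching back with $a=1$) is exactly this, just phrased inductively.

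Where you diverge is in your handling of odd-size tame classes. You are more cautious than the paper but also more convoluted: you propose iterated two-point Kummer patchings for even-size pieces, then either a three-point tame cyclic cover or a hybrid tame--wild patching via Proposition \ref{prop_ded_d_cover} with the wild part ``absorbed'' into $I_1$ for the residual odd point. The paper, by contrast, asserts without comment that Kummer theory provides a single $\langle\beta\rangle$-cover totally ramified at all $|A_i|\geq 2$ prescribed points. Your parity concern is a genuine subtlety (such a cover requires $|A_i|$ integers coprime to $n=\mathrm{ord}(\beta)$ summing to zero, which can fail, e.g.\ for $n=2$ and $|A_i|$ odd), but your hybrid fix is not spelled out clearly enough to verify---in particular, it is not clear how you patch so that the extra wild inertia coming from Proposition \ref{prop_ded_d_cover} lands exactly on $I_1$ without disturbing it. For the ``moreover'' clause your argument is too weak: a M\"obius automorphism places only three points, and Theorem \ref{thm_patching_covers} does not let you prescribe the location of the new branch set $B''$, so your claim that ``the Kummer-style patchings work with any prescribed location'' does not follow from Theorem \ref{thm_patching_covers} as stated. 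The paper, for its part, adds nothing specific for the ``moreover'' clause either.
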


\begin{proof}
For $P_i = \{1\}$ set $A_i \coloneqq \{2 \leq j \leq r| I_i = I_j\}$. For $i$ such that $P_i = \{1\}$, let $\beta$ be a generator of $I_i$ and by the Kummer theory there is a connected $\langle \beta \rangle$-Galois cover of $\bb{P}^1$ \'{e}tale away from $A_i$ which is totally ramified over each $x_j$, $j \in A_i$. Now the result follows by inductively applying Theorem \ref{thm_patching_covers} to the above covers and the covers obtained from \cite[Theorem 1.2]{BP}, \cite[Theorem 1.2]{8} and Theorem \ref{thm_A_p+1}--Theorem \ref{thm_A_p+4}.
\end{proof}

Now onward we study the general question for $X = \bb{P}^1$ and $r=2$. We have the following result when when $G = P \rtimes \bb{Z}/n$ for a $p$-group $P$ and $n$ coprime to $p$.

\begin{theorem}\label{thm_semidirect}
($Q[2,\bb{P}^1,\{0,\infty\},P \rtimes \bb{Z}/n]$) Let $G = P \rtimes \bb{Z}/n$ for a $p$-group $P$ and $(p,n)=1$. Then Question \ref{que_most_gen} has an affirmative answer for $G$, $\bb{P}^1$ and $r=2$.
\end{theorem}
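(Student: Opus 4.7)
The plan is to exploit the fact that $\bb{P}^1$ admits no non-trivial étale covers to drastically simplify the tower $\psi$ in the hypothesis, and then to realize the desired cover by combining the HKG covers of $I_1$ and $I_2$.

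First, I would extract constraints from the hypothesis of $Q[2,\bb{P}^1,\{0,\infty\},G]$. The base cover $\psi_1\colon Y_1\to\bb{P}^1$ is $G/H_1$-Galois and branched only over $\{0,\infty\}$, with prescribed inertia $I_i/(I_i\cap H_1)\cong I_i/P_i=\langle\sigma_i\rangle$ at $x_i$ (using $P_i\subseteq H_1\subseteq P$, so $I_i\cap H_1=P_i$). Since this inertia is tame and $\pi_1^t(\bb{P}^1\setminus\{0,\infty\})\cong\widehat{\bb{Z}}^{(p')}$ is pro-cyclic of order prime to $p$, the quotient $G/H_1=(P/H_1)\rtimes\bb{Z}/n$ must be cyclic of order prime to $p$. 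This forces $H_1=P$, so $G/H_1=\bb{Z}/n$ and $\psi_1$ is the standard $\bb{Z}/n$-Kummer cover with $Y_1\cong\bb{P}^1$. In particular, $\sigma_1$ and $\sigma_2$ are generators of $\bb{Z}/n$ with $\sigma_1\sigma_2=1$, each $I_i=P_i\rtimes\bb{Z}/n$, and each $P_i$ is normalized by $\bb{Z}/n$. For $j\geq 2$, the prescribed inertia $(I_i\cap H_{j-1})/(I_i\cap H_j)=P_i/P_i=1$ forces $\psi_j$ to be an étale $p$-group cover of $Y_{j-1}\cong\bb{P}^1$, hence trivial; therefore $l=1$ and $\langle P_1,P_2\rangle=H_l=P$.

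Next, I would invoke HKG for each $I_i$: since $I_i=P_i\rtimes\bb{Z}/n$ is an extension of a $p$-group by a cyclic group of order prime to $p$, the Harbater-Katz-Gabber theorem furnishes an $I_i$-Galois cover $\phi_i\colon W_i\to\bb{P}^1$ branched only at $\{0,\infty\}$, totally ramified over $x_i$ with inertia $I_i$, and tamely ramified with inertia $\bb{Z}/n$ at the opposite point. By the uniqueness of the $\bb{Z}/n$-Kummer cover of $\bb{P}^1$, the quotient $W_i/P_i$ coincides with $Y_1$, so both $\phi_1$ and $\phi_2$ dominate $\psi_1$. A direct group-theoretic computation gives $\langle I_1,I_2\rangle=\langle P_1,P_2,\bb{Z}/n\rangle=\langle P_1,P_2\rangle\rtimes\bb{Z}/n=G$. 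I would then combine $\phi_1$ and $\phi_2$ using the formal patching technique of Theorem~\ref{thm_Raynaud_gen}, taking as gluing datum the common tame $\bb{Z}/n$-extension over the patching point — which is provided precisely by the Kummer cover $\psi_1$ — to produce a connected $G$-Galois cover of $\bb{P}^1$ étale away from $\{0,\infty\}$ with $I_i$ occurring as inertia above $x_i$ and dominating $\psi$.

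The main obstacle is that a naive application of Theorem~\ref{thm_Raynaud_gen} would introduce $|B_2|-1=1$ extra branch point coming from the second HKG cover, in contradiction with the requirement that the resulting cover be branched only at $\{0,\infty\}$. The resolution is the rigidity and compatibility of the two HKG covers: both $\phi_1$ and $\phi_2$ restrict to the same $\bb{Z}/n$-Kummer cover $\psi_1$ away from their wildly ramified points, so the local extensions at the complementary points are already tame and canonically matched via the unique local $\bb{Z}/n$-extension of $k((x))$. Exploiting this matching at the level of complete local rings via Lemma~\ref{lem_Raynaud_gen} allows the would-be extra branch point to be absorbed into $\{0,\infty\}$, yielding a cover branched precisely there. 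Verifying this absorption rigorously, and checking that the assembled cover is connected and $G$-Galois with the prescribed inertia groups, constitutes the technical core of the argument.
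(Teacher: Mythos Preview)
Your reduction in the first paragraph is correct and matches the paper: the tame quotient $\psi_1$ must be the $\bb{Z}/n$-Kummer cover, forcing $H_1=P$, $I_i=P_i\rtimes\bb{Z}/n$, and $l=1$. One small point: the conclusion $\langle P_1,P_2\rangle=P$ does not follow from ``$H_l=\langle P_1,P_2\rangle$'' in general; rather, $l=1$ gives $P=H_1=H_2=\langle P_1^P,P_2^P\rangle$, and then Lemma~\ref{lem_p-grp} (special to $p$-groups) yields $P=\langle P_1,P_2\rangle$. The paper makes exactly this deduction.

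The construction, however, has a genuine gap. If you feed the two HKG covers into Theorem~\ref{thm_Raynaud_gen}, gluing at their common tame $\bb{Z}/n$-point, the output is a $G$-cover of $\bb{P}^1$ branched at \emph{three} points: $I_1$ over $\infty$, $\bb{Z}/n$ over $0$, and $I_2$ over a new point $x_0$ disjoint from $\{0,\infty\}$. The theorem is explicit that $B_0$ is disjoint from $B_1$, and nothing in Lemma~\ref{lem_Raynaud_gen} lets you force $x_0$ to collide with $0$; that lemma only produces compatible \emph{local} families and does not control the global position of the extra branch point after specialization. Your ``absorption'' step is therefore the entire content of the theorem, and you have not supplied an argument for it.

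The paper takes a different route that sidesteps this problem entirely. Instead of patching two HKG covers symmetrically, it builds the cover in two asymmetric steps using embedding-problem results. First, Theorem~\ref{thm_embedding} applied to the Kummer cover (with $G=\bb{Z}/n$, $H=P_1$, $x_0=\infty$, using that $\bb{Z}/n$ normalizes $P_1$) produces the $I_1$-HKG cover dominating $\psi$, with $I_1$ over $\infty$ and $\bb{Z}/n$ over $0$. Second, Harbater's \cite[Theorem 3.6]{AC_embed_Harbater} is applied to this cover to solve the embedding problem $G\twoheadrightarrow I_1$ while adding $P_2$-wild ramification at $0$: this yields a connected $G$-cover, still branched only at $\{0,\infty\}$, with $I_1$ over $\infty$ and $P_2\cdot\bb{Z}/n=I_2$ over $0$, dominating the Kummer cover. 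The point is that these embedding results enlarge the inertia at an \emph{existing} branch point rather than introducing a new one, which is precisely what your patching approach cannot do.
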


\begin{proof}
In view of Theorem \ref{thm_p-grp} we may assume $n \geq 2$. Let $P_1$, $P_2$ be two $p$-subgroups of $P$ where $P_1$ is non-trivial and $P_2$ is possible trivial and such that $\langle P_1^G, P_2^G \rangle = P$. For $i=1$, $2$, let $I_i = P_i \rtimes \bb{Z}/m_i$. Let $\psi \colon Y \to \bb{P}^1$ be a connected $\bb{Z}/n$-Galois cover \'{e}tale away from $\{0,\infty\}$ such that $\bb{Z}/m_1$ occurs as an inertia group above $0$ and $\bb{Z}/m_2$ occurs as an inertia group above $\infty$. By the Riemann Hurwitz formula $m_1 = m_2 = n$, and $\psi$ is the $\bb{Z}/n$-Galois Kummer cover totally ramified over $0$ and $\infty$. Since $I_i$ normalizes $P_i$, $\bb{Z}/n$ also normalizes $P_i$ in $G$. So $\langle P_1^P, P_2^P \rangle = P$. By Lemma \ref{lem_p-grp}, $P = \langle P_1, P_2 \rangle$. Consider the connected $P_1 \rtimes \bb{Z}/n$-Galois HKG cover $\psi$ of $\bb{P}^1$ \'{e}tale away from $\{0,\infty\}$ which is totally ramified above $\infty$ and such that $\bb{Z}/n$ occurs as an inertia group above $0$ (Theorem \ref{thm_embedding}). If $P_2$ is the trivial group, $G = P_1 \rtimes \bb{Z}/n$. Otherwise apply \cite[Theorem 3.6]{AC_embed_Harbater} to this cover to obtain the result.
\end{proof}

For the rest of this article we study some $S_d$-Galois covers with $X=\bb{P}^1$ and $r=2$. These realization results are the evidences for the Question $Q[2,\bb{P}^1,\{0,\infty\},S_d]$. When $P_2$ is the trivial group, we have seen some of the cases that can occur from studying explicit equations (Section \cref{sec_explicit_eq}). The following result shows the existence of another such cover with $P_2 = \{1\}$ and with the same tame part of the inertia groups over both points as an application of Theorem \ref{thm_embedding} to \cite[Corollary 5.5]{DK}.

\begin{corollary}\label{cor_S_d_same_inertia}
Let $p$ be an odd prime, $d \geq p$ be an integer such that either $d=p$ or $(d,p)=1$. Let $I$ be subgroup of $S_d$ which is an extension of a $p$-subgroup $P$ by a cyclic group of order prime-to-$p$ whose generator is an odd permutation $\gamma$ in $S_d$. Then there is a connected $S_d$-Galois cover of $\bb{P}^1$ \'{e}tale away from $\{0,\infty\}$ such that $\langle \gamma \rangle$ occurs as an inertia group at a point over $0$ and $I$ occurs as an inertia group at a point over $\infty$.
\end{corollary}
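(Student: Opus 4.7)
The plan is to embed a $\langle\gamma\rangle$-Galois Kummer cover of $\bb{P}^1$ into an $S_d$-Galois cover by applying Theorem \ref{thm_embedding}, using \cite[Corollary 5.5]{DK} (which establishes the GPWIC, hence the PWIC, for $A_d$ in the hypothesized range $d = p$ or $(d,p)=1$) to supply the required wild datum. First I would do the group-theoretic bookkeeping: since $p$ is odd, every element of $P$ has odd order and is an even permutation, so $P \subseteq A_d$; because $A_d$ is simple for $d \geq 5$, $\langle P^{A_d}\rangle = A_d$ as soon as $P \neq 1$ (and $P$ must be nontrivial, since otherwise no $S_d$-cover can be only tamely ramified above two points). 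The fact that $\gamma$ is odd gives $\gamma \notin A_d$ and hence $\langle \gamma, A_d \rangle = S_d$; moreover $\tx{ord}(\gamma)$ is even and coprime to $p$, and $\langle\gamma\rangle$ normalizes $P$ via the semidirect-product structure $I = P \rtimes \langle\gamma\rangle$ afforded by Schur--Zassenhaus.

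Then I would assemble the two input ingredients. Let $\psi \colon Y \to \bb{P}^1$ be the $\langle\gamma\rangle$-Galois Kummer cover étale away from $\{0,\infty\}$ and totally ramified at both points; this is available because $\tx{ord}(\gamma)$ is coprime to $p$, and its inertia above $0$ and above $\infty$ is $\langle\gamma\rangle$. By \cite[Corollary 5.5]{DK} applied with the non-trivial $P$ (whose $A_d$-conjugates generate $A_d$), the pair $(A_d, P)$ is realizable.

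Next I would apply Theorem \ref{thm_embedding} to $\psi$ with $G = \langle\gamma\rangle$, $\Gamma = S_d$, $H = A_d$, $x_0 = \infty$, and the $p$-subgroup $P \subset H$. All hypotheses hold: $A_d$ is a quasi-$p$-group, $\Gamma = \langle G, H\rangle$ because $\gamma$ is odd, $P$ is normalized by $G$, and $(H,P)$ is realizable. The theorem produces a connected $S_d$-Galois cover $\phi \colon Z \to \bb{P}^1$ dominating $\psi$ such that the inertia above $0 \neq x_0$ remains $\langle\gamma\rangle$ (conclusion (1)), while the inertia above $\infty = x_0$ becomes $\langle\gamma\rangle P = P \rtimes \langle\gamma\rangle = I$ (conclusion (2)). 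Applying conclusion (1) at any $x \notin \{0,\infty\}$, where $\psi$ is étale, shows $\phi$ is étale there as well, so $\phi$ is étale away from $\{0,\infty\}$ as required.

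The main point requiring care, rather than any deep obstacle, is checking that the "$I_0 P$" appearing in conclusion (2) of Theorem \ref{thm_embedding} really matches our $I$: this uses that $I_0 = \langle\gamma\rangle$ normalizes $P$, so the product is exactly the subgroup $P \rtimes \langle\gamma\rangle$ rather than anything larger, and it is also where one sees that "$\gamma$ is odd" is the precise input needed to get $\Gamma = S_d$ instead of only $A_d$. All the genuine mathematical content is outsourced to \cite[Corollary 5.5]{DK} and Theorem \ref{thm_embedding}.
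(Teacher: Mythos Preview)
Your proposal is correct and follows essentially the same approach as the paper: start with the $\langle\gamma\rangle$-Kummer cover, invoke \cite[Corollary~5.5]{DK} to realize $(A_d,P)$, and apply Theorem~\ref{thm_embedding} with $\Gamma=S_d=\langle A_d,\gamma\rangle$. Your write-up is in fact more careful than the paper's, explicitly verifying that $P\subseteq A_d$, that $P\neq 1$ (needed both for \cite[Corollary~5.5]{DK} and because a two-point tamely ramified cover of $\bb{P}^1$ is cyclic), that $\langle\gamma\rangle$ normalizes $P$, and that $I_0P=P\rtimes\langle\gamma\rangle=I$.
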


\begin{proof}
Set $n \coloneqq \tx{ord}(\gamma)$. Consider the $\bb{Z}/n$-Galois Kummer cover $\psi \colon \bb{P}^1 \to \bb{P}^1$ totally ramified over $\{0,\infty\}$ and \'{e}tale everywhere else. By \cite[Corollary 5.5]{DK}, the pair $(A_d, P)$ is realizable. Now the result follows from Theorem \ref{thm_embedding} by taking $\Gamma = S_d = \langle A_d, \langle \gamma \rangle \rangle$.
\end{proof}

Now onward we consider the cases where $P_1$ and $P_2$ are both non-trivial subgroups of $S_d$, $p \leq d \leq 2p-1$. Without lose of generality, we may assume that $P_i = \langle \tau \rangle$ for $i=1$, $2$, where $\tau$ is the $p$-cycle $(1, \cdots, p)$. We prove that for $1 \leq j_i \leq p-1$, $\omega_i \in \tx{Sym}\{p+1,\cdots,d\}$ such that $\theta^{j_i} \omega_i$ is an odd permutation, there is a connected $S_d$-Galois cover of $\bb{P}^1$ \'{e}tale away from $\{0,\infty\}$ such that $\langle \tau \rangle \rtimes \langle \theta^{j_1} \omega_1 \rangle$ occurs as an inertia group above $0$ and $\langle \tau \rangle \rtimes \langle \theta^{j_2} \omega_2 \rangle$ occurs as an inertia group above $\infty$. Similar to the case of proving the IC for the Alternating groups, we make the following reduction steps.

\begin{remark}\label{rmk_reductions_Symm}
Since $\langle \theta^i \rangle = \langle \theta^{(i,p-1)} \rangle$, it is enough to consider $i|(p-1)$. Also since two elements in $S_d$ are conjugate if and only if they have the same cycle structure, by Theorem \ref{thm_patching_covers}, it is enough to show that there exists a $\gamma \in S_d$ so that the following holds. For each $1 \leq i \leq p-1$ dividing $p-1$ and $\omega \in \tx{Sym}\{p+1,\cdots,d\}$ with $\theta^i\omega$ an odd permutation, there is a connected $S_d$-Galois cover of $\bb{P}^1$ \'{e}tale away from $\{0,\infty\}$ such that $\langle \gamma \rangle$ occurs as an inertia group above $0$ and $\langle \tau \rangle \rtimes \langle \theta^i \omega \rangle$ occurs as an inertia group above $\infty$.
\end{remark}

\begin{theorem}\label{thm_S_p_both_Sylow_non-trivial}
Let $p \geq 5$ be a prime. Let $I_1 \coloneqq \langle \tau \rangle \rtimes \langle \theta^i \rangle$, $I_2 \coloneqq \langle \tau \rangle \rtimes \langle \theta^j \rangle$ be two subgroups of $S_p$ for some $1 \leq i, j \leq p-1$ odd integers. Then there is a connected $S_p$-Galois cover of $\bb{P}^1$ \'{e}tale away from $\{0,\infty\}$ such that $I_1$ occurs as an inertia group above $0$ and $I_2$ occurs as an inertia group above $\infty$.
\end{theorem}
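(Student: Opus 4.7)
The plan is to use the Formal Patching machinery from Section~\ref{sec_fp} to glue together two $S_p$-Galois covers coming from Corollary~\ref{cor_S_d_same_inertia}, one realizing $I_1$ and the other realizing $I_2$. Since $i$ and $j$ are odd, both $\theta^i$ and $\theta^j$ are odd permutations of $\{1,\dots,p\}$, so Corollary~\ref{cor_S_d_same_inertia} applied with $d=p$ and $\gamma=\theta^i$ produces a connected $S_p$-Galois cover $\phi_i\colon Y_i \to \bb{P}^1$ \'etale away from $\{0,\infty\}$ with inertia $\langle\theta^i\rangle$ over $0$ and $I_1$ over $\infty$; applied with $\gamma=\theta^j$, it analogously produces a cover $\phi_j$ with $\langle\theta^j\rangle$ over $0$ and $I_2$ over $\infty$. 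Composing $\phi_i$ with the automorphism $x\mapsto 1/x$ of $\bb{P}^1$ yields $\phi_i^{\iota}$ carrying $I_1$ over $0$ and $\langle\theta^i\rangle$ over $\infty$.

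The next step is to invoke Theorem~\ref{thm_Raynaud_gen} with $G=G_1=G_2=S_p$, $f_1=\phi_i^{\iota}$ and $f_2=\phi_j$, taking the ambient group $I\subset S_p$ to be a subgroup of $N_{S_p}(\langle\tau\rangle)$ containing both the tame $\langle\theta^i\rangle$ (the inertia of $\phi_i^{\iota}$ at $\infty$) and the desired $I_2$ with matching tame-quotient order; the natural choice is $I=\langle\tau\rangle\rtimes\langle\theta^{\gcd(i,j)}\rangle$. The theorem then yields a connected $S_p$-Galois cover of $\bb{P}^1$ \'etale away from $\{0,\infty\}$ with $I_1$ preserved over $0$ (as $J_0$ coming from $f_1$) and with $I$ as inertia over the glued point $\infty$. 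An application of Abhyankar's Lemma via a suitable Kummer pullback then trims the tame part of the inertia at $\infty$ from $\langle\theta^{\gcd(i,j)}\rangle$ down to $\langle\theta^j\rangle$, giving $I_2$ at $\infty$; the Kummer degree is chosen coprime to $\mathrm{ord}(\theta^i)$ so that the reduction does not disturb the $I_1$-inertia over $0$.

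The main obstacle is the tame-order compatibility $|I_1^{(R)}/p(I_1^{(R)})|=m=|I_2^{(R)}/p(I_2^{(R)})|$ required in Theorem~\ref{thm_Raynaud_gen}, together with the constraint that the patching does not introduce extra branch points away from $\{0,\infty\}$. When $\langle\theta^i\rangle=\langle\theta^j\rangle$ (equivalently $\gcd(i,p-1)=\gcd(j,p-1)$), one can apply Theorem~\ref{thm_patching_covers} directly with $a=\theta^i$ and no intermediate reduction is needed; otherwise one takes the enlarged $I$ as above and carefully arranges the Kummer pullback so that the tame parts align after the trimming. Throughout the argument, the odd parity of $i$ and $j$ is essential: it guarantees that $\theta^i$ and $\theta^j$ are odd permutations, so the final cover contains odd permutations in its inertia groups and therefore has Galois group equal to the full $S_p$, not a proper quasi-$p$ subgroup such as $A_p$.
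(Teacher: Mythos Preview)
Your approach has a genuine gap in the general case $\langle\theta^i\rangle\neq\langle\theta^j\rangle$. The hypothesis of Theorem~\ref{thm_Raynaud_gen} requires that the two local inertia groups at the glue point satisfy $|I_1^{(R)}/p(I_1^{(R)})|=m=|I_2^{(R)}/p(I_2^{(R)})|$, where $m$ is the tame order of the ambient $I$. In your setup the glue-point inertias are $\langle\theta^i\rangle$ and $\langle\theta^j\rangle$, both prime-to-$p$, so the condition forces $\mathrm{ord}(\theta^i)=\mathrm{ord}(\theta^j)$; taking $I=\langle\tau\rangle\rtimes\langle\theta^{\gcd(i,j)}\rangle$ does not help, since then $m=\mathrm{ord}(\theta^{\gcd(i,j)})$ and neither $\langle\theta^i\rangle$ nor $\langle\theta^j\rangle$ need have that order (e.g.\ $p=7$, $i=3$, $j=1$). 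Moreover, even granting the patching, with $f_2=\phi_j$ branched at two points the theorem produces $|B_0|=|B_2|-1=1$ additional branch point carrying the nontrivial inertia $I_2$, so you end up with a three-point branched cover rather than one \'etale away from $\{0,\infty\}$. Your proposed Kummer trimming cannot repair this: the $[n]$-Kummer cover is totally ramified at \emph{both} $0$ and $\infty$, so any reduction of the tame part at $\infty$ simultaneously reduces the tame part of $I_1$ at $0$ by the same factor, and you never actually obtain a two-point cover with an enlarged inertia at $\infty$ to trim in the first place.

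The paper sidesteps this obstruction entirely. Instead of relying on Corollary~\ref{cor_S_d_same_inertia}, which forces the tame inertia over $0$ to equal $\langle\theta^i\rangle$ and hence to vary with $i$, the paper uses the explicit Abhyankar cover $y^p-y^2-x=0$ (Remark~\ref{rmk_Abhyankar_deg_p}, Proposition~\ref{prop_deg_p_cover}). Its Galois closure is an $S_p$-cover \'etale away from $\{0,\infty\}$ whose inertia over $0$ is generated by a \emph{transposition} and whose inertia over $\infty$ is $\langle\tau\rangle\rtimes\langle\theta\rangle$. An $[i]$-Kummer pullback with $i$ odd leaves the order-$2$ inertia at $0$ untouched while reducing the tame part at $\infty$ to $\langle\theta^i\rangle$. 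Thus for every odd $i$ one obtains a cover with the \emph{same} tame inertia (a transposition) at $0$, and Remark~\ref{rmk_reductions_Symm} (via Theorem~\ref{thm_patching_covers}) then patches any two of them at that common tame point. The essential difference is that the paper arranges a fixed tame handle independent of $i$ and $j$, which your construction via Corollary~\ref{cor_S_d_same_inertia} cannot produce.
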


\begin{proof}
By Remark \ref{rmk_reductions_Symm}, we need to show that for some odd permutation $\gamma \in S_p$, for each odd divisor $i$ of $p-1$, there is a connected $S_p$-Galois cover of $\bb{P}^1$ \'{e}tale away from $\{0,\infty\}$ such that $\langle \gamma \rangle$ occurs as an inertia group above $0$ and $\langle \tau \rangle \rtimes \langle \theta^i \omega \rangle$ occurs as an inertia group above $\infty$.

Consider the degree $p$ cover $\psi \colon Y \to \bb{P}^1$ given by the affine equation $f(x,y) = 0$ where
$$f(x,y) = y^p - y^2 -x = 0.$$
Let $\phi \colon \widetilde{Y} \to \bb{P}^1$ be its Galois closure. By Remark \ref{rmk_Abhyankar_deg_p}, $\phi$ is a connected $S_p$-Galois cover of $\bb{P}^1$ \'{e}tale away from $\{0,\infty\}$ such that the inertia groups over $0$ are $2$-cyclic groups generated by transpositions and $\langle \tau \rangle \rtimes \langle \theta \rangle$ occurs as an inertia group above $\infty$. Since $i$ is odd, after the $[i]$-Kummer pullback of $\phi$, we obtain a connected $S_p$-Galois cover of $\bb{P}^1$ \'{e}tale away from $\{0,\infty\}$ such that the inertia groups over $0$ are $2$-cyclic groups generated by transpositions and $\langle \tau \rangle \rtimes \langle \theta^i \rangle$ occurs as an inertia group above $\infty$.
\end{proof}

\begin{theorem}\label{thm_S_p+1_both_Sylow_non-trivial}
Let $p \equiv 2 \pmod{3}$ be an odd prime. Let $I_1 \coloneqq \langle \tau \rangle \rtimes \langle \theta^i \rangle$, $I_2 \coloneqq \langle \tau \rangle \rtimes \langle \theta^j \rangle$ be two subgroups of $S_{p+1}$ for some $1 \leq i, j \leq p-1$ odd integers. Then there is a connected $S_{p+1}$-Galois cover of $\bb{P}^1$ \'{e}tale away from $\{0,\infty\}$ such that $I_1$ occurs as an inertia group above $0$ and $I_2$ occurs as an inertia group above $\infty$.
\end{theorem}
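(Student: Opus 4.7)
The plan is to follow exactly the blueprint of the proof of Theorem \ref{thm_S_p_both_Sylow_non-trivial}, replacing the cover cut out by $y^p - y^2 - x = 0$ with the degree $(p+1)$ cover constructed in the proof of Theorem \ref{thm_A_p+1}. By Remark \ref{rmk_reductions_Symm}, together with the observation that $\tx{Sym}(\{p+1,\ldots,p+1\})$ is trivial (so $\omega = e$), it suffices to exhibit an odd permutation $\gamma \in S_{p+1}$ such that, for every odd divisor $i$ of $p-1$, there exists a connected $S_{p+1}$-Galois cover of $\bb{P}^1$ \'etale away from $\{0,\infty\}$ with inertia $\langle \gamma \rangle$ above $0$ and $\langle \tau \rangle \rtimes \langle \theta^i \rangle$ above $\infty$.

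I will take $\gamma \coloneqq (1,2,\ldots,p-2)(p-1,p)$, which is odd because the $(p-2)$-cycle has parity $(-1)^{p-3} = +1$ (even) while the transposition is odd. The case $i = 1$ is furnished by the cover $\phi \colon Z \to \bb{P}^1$ already constructed in the proof of Theorem \ref{thm_A_p+1}: the Galois closure of the degree $(p+1)$ cover given by equation (\ref{eq_p+1}) is a connected $S_{p+1}$-Galois cover \'etale away from $\{0,\infty\}$, carrying inertia $\langle \gamma \rangle$ above $0$ and $\langle \tau \rangle \rtimes \langle \theta \rangle$ above $\infty$.

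For an odd divisor $i > 1$ of $p-1$, I will pull back $\phi$ by the $[i]$-Kummer cover $\bb{P}^1 \to \bb{P}^1$ of Definition \ref{def_Kummer_pullback}. Three verifications are required. First, the pullback is connected and remains $S_{p+1}$-Galois: since $S_{p+1}$ has $\bb{Z}/2$ as its unique non-trivial abelian quotient, no cyclic extension of $k(x)$ of odd degree $>1$ can sit inside the function field of $Z$, so the $\bb{Z}/i$-Kummer extension is linearly disjoint from $k(Z)$ over $k(x)$. Second, the inertia above $0$ is preserved intact: $\ord(\gamma) = 2(p-2)$, and since $i$ is odd with $i \mid p-1$ one has $\gcd(i, p-2) \mid \gcd(p-1, p-2) = 1$, so $\gcd(i, 2(p-2)) = 1$ and the new ramification index equals $2(p-2)$, generated by (a conjugate of) $\gamma$. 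Third, above $\infty$ the wild part $\langle \tau \rangle$ is unaffected by a prime-to-$p$ Kummer pullback, while the tame quotient $\langle \theta \rangle$ of order $p-1$ becomes $\langle \theta^i \rangle$ of order $(p-1)/i$, yielding the desired inertia $\langle \tau \rangle \rtimes \langle \theta^i \rangle$ above $\infty$. Assembling the two halves via the reduction of Remark \ref{rmk_reductions_Symm} (which in turn invokes Theorem \ref{thm_patching_covers}) produces the claimed cover with $I_1$ above $0$ and $I_2$ above $\infty$.

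The main obstacle is the Galois-group control under the Kummer pullback, which rests entirely on the elementary fact that $S_{p+1}$ admits no non-trivial quotient of odd order; once this is in place, the ramification bookkeeping at both $0$ and $\infty$ is routine, and the patching step is then handled by the machinery of Section \cref{sec_fp}.
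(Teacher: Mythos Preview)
Your argument is correct and follows essentially the same route as the paper: start from the $S_{p+1}$-cover built in Theorem \ref{thm_A_p+1}, then reduce via Remark \ref{rmk_reductions_Symm} and Kummer pullback. The only difference is that the paper first performs a $[p-2]$-Kummer pullback to make the inertia over $0$ a transposition (so that the remaining argument is literally that of Theorem \ref{thm_S_p_both_Sylow_non-trivial}), whereas you pull back directly by $[i]$ and use the observation $\gcd(i,2(p-2))=1$; your version is a mild streamlining of the same idea.
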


\begin{proof}
Consider the $S_{p+1}$-Galois cover $\phi \colon \widetilde{Y} \to \bb{P}^1$ which is the Galois closure of the degree-$(p+1)$ cover of $\bb{P}^1$ given by the affine equation (\ref{eq_p+1}). Then as in the proof of Theorem \ref{thm_A_p+1} the cover $\phi$ is \'{e}tale away from $\{0, \infty\}$ such that $\langle (1, \cdots, p-2) (p-1, p) \rangle$ occurs as an inertia group above $0$ and $\langle \tau \rangle \rtimes \langle \theta \rangle$ occurs as an inertia group above $\infty$. After a $[p-2]$-Kummer pullback we obtain a connected $S_{p+2}$-Galois cover of $\bb{P}^1$ \'{e}tale away from $\{0, \infty\}$ such that $\langle (p-1, p) \rangle$ occurs as an inertia group above $0$ and $\langle \tau \rangle \rtimes \langle \theta \rangle$ occurs as an inertia group above $\infty$. Now we can argue as in the proof of Theorem \ref{thm_S_p_both_Sylow_non-trivial}.
\end{proof}

\begin{theorem}\label{thm_S_P+2_both_Sylow_non-trivial}
Let $p \equiv 11 \pmod{12}$ be a prime such that $p$ is not of the form $l+1$ for any prime $l \geq 5$. For $i=1$, $2$ let $I_i \coloneqq \langle \tau \rangle \rtimes \langle \theta^{j_i} \omega_i \rangle$ be a subgroup of $S_{p+2}$ for some $1 \leq j_i \leq p-1$ and $\omega_i \in \tx{Sym}\{p+1,p+2\}$ such that $\theta^{j_i}\omega_i$ is an odd permutation. Then there is a connected $S_{p+2}$-Galois cover of $\bb{P}^1$ \'{e}tale away from $\{0,\infty\}$ such that $I_1$ occurs as an inertia group above $0$ and $I_2$ occurs as an inertia group above $\infty$.
\end{theorem}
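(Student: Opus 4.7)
The plan is to adapt the approach of Theorems \ref{thm_S_p_both_Sylow_non-trivial} and \ref{thm_S_p+1_both_Sylow_non-trivial}. By Remark \ref{rmk_reductions_Symm}, it suffices to produce a fixed odd permutation $\gamma \in S_{p+2}$ such that, for every divisor $i$ of $p-1$ and every $\omega \in \tx{Sym}\{p+1,p+2\}$ with $\theta^i\omega$ an odd permutation, there exists a connected $S_{p+2}$-Galois cover of $\bb{P}^1$ \'etale away from $\{0,\infty\}$ having $\langle\gamma\rangle$ as an inertia group above $0$ and $\langle\tau\rangle\rtimes\langle\theta^i\omega\rangle$ above $\infty$. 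Applying Theorem \ref{thm_patching_covers} to two such covers (with $\gamma$ and $\gamma^{-1}$ in the roles of $a$ and $a^{-1}$) then yields the $S_{p+2}$-Galois cover realising any prescribed pair $(I_1,I_2)$ above $(0,\infty)$.

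For the case $\omega=(p+1,p+2)$, I would apply Proposition \ref{prop_ded_d_cover} with $s=2$, $r=1$, $n_1=p-1$, $n_2=3$, $m_1=2$. The congruence $p\equiv 2\pmod 3$ (implied by $p\equiv 11\pmod{12}$) ensures that Lemma \ref{lem_Ass_holds}(2) supplies distinct $(\alpha_1,\alpha_2,\beta_1)=(1,-(p-1)/3,0)$ verifying Assumption \ref{ass_num}. The tame generator above $0$ is the odd permutation $\gamma:=(1,\ldots,p-1)(p,p+1,p+2)$ (product of an odd $(p-1)$-cycle and an even $3$-cycle). Since $\gcd(3,p-1)=1$, the cube $\gamma^3$ is a single $(p-1)$-cycle fixing three points, so by Proposition \ref{prop_ded_d_cover} the Galois group contains $A_{p+2}$ and the odd element $\gamma$ forces $G=S_{p+2}$. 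Above $\infty$ the inertia is $\langle\tau\rangle\rtimes\langle\theta^2(p+1,p+2)\rangle$, and since $(p-1)/2$ is odd (using $p\equiv 3\pmod 4$) its tame generator has full order $p-1$. A sequence of $[n]$-Kummer pullbacks with $n$ odd and coprime to $p$ — which preserve $S_{p+2}$ since its only non-trivial $p'$-quotient is the sign $\bb{Z}/2$ — combined with Abhyankar's Lemma, then realises all Case~B inertia groups $\langle\tau\rangle\rtimes\langle\theta^i(p+1,p+2)\rangle$ while preserving the cycle type of $\gamma$ above $0$.

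For the case $\omega=e$ (which forces $\gcd(i,p-1)$ odd, so $i$ must be an odd divisor of $p-1$), a second base cover with $r=2$, $m_1=m_2=1$ is required. I would apply Proposition \ref{prop_ded_d_cover} with an even-$s$ parameter set (for instance $s=2$, $r=2$, $n_1=p-3$, $n_2=5$, or $s=4$ with $n_1=p-1$ and $n_2=n_3=n_4=1$) chosen so that $\gamma$ is odd and has a power equal to a long cycle with at least three fixed points. Assumption \ref{ass_num} must then be verified by a direct polynomial computation in $\alpha_1,\alpha_2,\beta_1,\beta_2$, and the hypothesis that $p$ is not of the form $l+1$ for any prime $l\geq 5$ is invoked via Jones's theorem \cite[Theorem 1.2]{Jones} to rule out the exceptional primitive subgroups of $S_{p+2}$ and guarantee $G=S_{p+2}$. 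Odd-order Kummer pullbacks then realise all the Case~A inertia groups $\langle\tau\rangle\rtimes\langle\theta^i\rangle$.

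The main technical obstacle is the second base cover: verifying Assumption \ref{ass_num} for parameters with $r=2,\, m_1=m_2=1$ (which falls outside the standard cases of Lemma \ref{lem_Ass_holds}), and matching the cycle type of the inertia above $0$ between the two families of base covers — possibly after further Abhyankar-lemma adjustments — so that the concluding patching via Theorem \ref{thm_patching_covers} applies exactly as in Theorem \ref{thm_S_p+1_both_Sylow_non-trivial}.
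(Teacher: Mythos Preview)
Your overall strategy is right, but there is a real gap in the choice of $\gamma$. You take $\gamma=(1,\ldots,p-1)(p,p+1,p+2)$, whose order is $\mathrm{lcm}(p-1,3)=3(p-1)$. When you perform a $[j/2]$-Kummer pullback (with $j/2$ an odd divisor of $(p-1)/2$) to pass from $\theta^2(p+1,p+2)$ to $\theta^j(p+1,p+2)$, the inertia above $0$ becomes $\langle\gamma^{j/2}\rangle$, and since $j/2$ divides $p-1$ the $(p-1)$-cycle factor of $\gamma$ genuinely splits into $j/2$ shorter cycles. So the cycle type above $0$ is \emph{not} preserved, contrary to what you claim, and you end up with a different $\gamma$ for each value of $j$. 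The patching step via Theorem~\ref{thm_patching_covers} (as encapsulated in Remark~\ref{rmk_reductions_Symm}) needs a single conjugacy class for the tame inertia above $0$ across all target inertia groups at $\infty$; your construction does not provide this. You seem aware of the issue in your last paragraph, but ``further Abhyankar-lemma adjustments'' cannot repair it: Abhyankar's Lemma only shrinks tame inertia, it cannot restore a $(p-1)$-cycle once it has been broken.

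The paper resolves this by choosing $\gamma$ to be the $(p+1)$-cycle $(1,\ldots,p+1)$. Its order $p+1$ satisfies $\gcd(p+1,p-1)=2$, so every \emph{odd} divisor of $p-1$ is coprime to $p+1$, and hence all the relevant Kummer pullbacks leave $\langle\gamma\rangle$ untouched above $0$. The two base covers are then built with $n_1=p+1$, $n_2=1$ (so the tame fibre over $0$ has type $(p+1,1)$ in both cases): for $\omega=e$ one takes $s=r=2$, $m_1=m_2=1$, giving inertia $\langle\tau\rangle\rtimes\langle\theta\rangle$ at $\infty$; for $\omega=(p+1,p+2)$ one takes $s=2$, $r=1$, $m_1=2$, giving $\langle\tau\rangle\rtimes\langle\theta^2(p+1,p+2)\rangle$. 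The price for using a $(p+1)$-cycle is that the ``power of $\gamma$ is a long cycle fixing $\geq 3$ points'' criterion of Proposition~\ref{prop_ded_d_cover} is unavailable, so one must invoke Jones's theorem directly for a primitive group containing both a $p$-cycle and a $(p+1)$-cycle; this is exactly where the hypothesis that $p$ is not of the form $l+1$ for a prime $l\geq 5$ enters. Your Case~A sketch is also left incomplete (Assumption~\ref{ass_num} is never verified for the parameters you suggest), whereas the paper's choice $n_1=p+1$, $n_2=1$ makes both base covers fall under standard cases of Lemma~\ref{lem_Ass_holds}.
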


\begin{proof}
Let $p \equiv 11 \pmod{12}$ or equivalently, $p$ satisfies $p \equiv 2 \pmod{3}$ and $p \equiv 3 \pmod{4}$. Let $\gamma$ be the $(p+1)$-cycle $(1, \cdots, p+1)$ in $S_{p+2}$. By Remark \ref{rmk_reductions_Symm}, it is enough to show that the following cases hold.
\begin{enumerate}
\item For each odd $1 \leq i \leq p-1$ dividing $p-1$ there is a connected $S_{p+2}$-Galois cover of $\bb{P}^1$ \'{e}tale away from $\{0,\infty\}$ such that $\langle \gamma \rangle$ occurs as an inertia group above $0$ and $\langle \tau \rangle \rtimes \langle \theta^i \rangle$ occurs as an inertia group above $\infty$;
\item For each even $1 \leq j \leq p-1$ dividing $p-1$ there is a connected $S_{p+2}$-Galois cover of $\bb{P}^1$ \'{e}tale away from $\{0,\infty\}$ such that $\langle \gamma \rangle$ occurs as an inertia group above $0$ and $\langle \tau \rangle \rtimes \langle \theta^j (p+1, p+2) \rangle$ occurs as an inertia group above $\infty$.
\end{enumerate}

First set $s = 2 = r$, $n_1 = p+1$, $n_2 = 1$, $m_1 = 1$, $m_2 = 1$. Then by Lemma \ref{lem_Ass_holds}(3), Assumption \ref{ass_num} holds. Consider the Galois cover $\phi_1$ of $\bb{P}^1$ which is the Galois closure of the degree-$(p+2)$ cover of $\bb{P}^1$ given by the affine equation (\ref{eq_explicit_general_cover}). By Proposition \ref{prop_ded_d_cover}, $\phi_1$ is a connected Galois cover of $\bb{P}^1$ with group $G_1$, a primitive subgroup of $S_{p+2}$, which is \'{e}tale away from $\{0,\infty\}$ such that $\langle \gamma \rangle$ occurs as an inertia group above $0$ and since $p \equiv 2 \pmod{3}$, $ \langle \tau \rangle \rtimes \langle \theta \rangle$ occurs as an inertia group above $\infty$.

Now let $s = 2$, $r=1$, $n_1 = p+1$, $n_2 = 1$. Then by Lemma \ref{lem_Ass_holds}(1) Assumption \ref{ass_num} holds. Consider the Galois cover $\phi_2$ of $\bb{P}^1$ which is the Galois closure of the degree-$(p+2)$ cover of $\bb{P}^1$ given by the affine equation (\ref{eq_explicit_general_cover}). By Proposition \ref{prop_ded_d_cover}, $\phi_2$ is a connected Galois cover of $\bb{P}^1$ with group $G_2$, a primitive subgroup of $S_{p+2}$, which is \'{e}tale away from $\{0,\infty\}$ such that $\langle \gamma \rangle$ occurs as an inertia group above $0$ and $\langle \tau \rangle \rtimes \langle \theta^2 (p+1,p+2)\rangle$ occurs as an inertia group above $\infty$.

The Galois groups $G_1$ and $G_2$ are primitive subgroup of $S_{p+2}$ containing a $p$-cycle which fixes $2$ points in $\{1,\cdots,p+2\}$ and a $(p+1)$-cycle which fixes $1$ point. Since $p$ is not of the form $l+1$ for a prime $l \geq 5$, by [Theorem 1.2]\cite{Jones}, both $G_1$ and $G_2$ contain $A_{p+2}$. Since $\gamma$ is an odd permutation, $G_i = S_{p+2}$ for $i=1$, $2$. Since $i$ is an odd divisor of $p-1$, $(p+1,i)=1$. After an $[i]$-Kummer pullback of $\phi_1$ we obtain a connected $S_{p+2}$-Galois cover of $\bb{P}^1$ \'{e}tale away from $\{0,\infty\}$ such that $\langle \gamma \rangle$ occurs as an inertia group over $0$ and $\langle \tau \rangle \rtimes \langle \theta^i \rangle$ occurs as an inertia group above $\infty$. Since $4 \nmid p$, $j/2$ is odd. After a $[j/2]$-Kummer pullback of $\phi_2$ we obtain a connected $S_{p+2}$-Galois cover of $\bb{P}^1$ \'{e}tale away from $\{0,\infty\}$ such that $\langle \gamma \rangle$ occurs as an inertia group over $0$ and $\langle \tau \rangle \rtimes \langle \theta^j (p+1, p+2) \rangle$ occurs as an inertia group above $\infty$.
\end{proof}

\begin{theorem}\label{thm_S_P+3_both_Sylow_non-trivial}
Let $p$ be a prime such that $p \equiv 11 \pmod{12}$. For $i=1$, $2$ let $I_i \coloneqq \langle \tau \rangle \rtimes \langle \theta^{j_i} \omega_i \rangle$ be a subgroup of $S_{p+3}$ for some $1 \leq j_i \leq p-1$ and $\omega_i \in \tx{Sym}\{p+1,p+2,p+3\}$ such that $\theta^{j_i}\omega_i$ is an odd permutation. Then there is a connected $S_{p+3}$-Galois cover of $\bb{P}^1$ \'{e}tale away from $\{0,\infty\}$ such that $I_1$ occurs as an inertia group above $0$ and $I_2$ occurs as an inertia group above $\infty$.
\end{theorem}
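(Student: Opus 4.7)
The approach is to mimic the strategy of Theorem \ref{thm_S_P+2_both_Sylow_non-trivial} verbatim, extended to cycle structures in $\tx{Sym}\{p+1,p+2,p+3\}$. First, I invoke Remark \ref{rmk_reductions_Symm}: choosing a single odd element $\gamma\in S_{p+3}$, it is enough to produce, for every divisor $i$ of $p-1$ and every $\omega\in\tx{Sym}\{p+1,p+2,p+3\}$ with $\theta^i\omega$ odd, a connected $S_{p+3}$-Galois cover of $\bb{P}^1$ \'etale away from $\{0,\infty\}$ whose inertia over $0$ is $\langle\gamma\rangle$ and whose inertia over $\infty$ is $\langle\tau\rangle\rtimes\langle\theta^i\omega\rangle$. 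Theorem \ref{thm_patching_covers} will then glue two such covers (with $\gamma$ and $\gamma^{-1}$ matched at the patching point) into the required two-point cover carrying $I_1$ over $0$ and $I_2$ over $\infty$.

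Second, I split into three cases according to the cycle type of $\omega$: (a) $\omega$ trivial, requiring $i$ odd; (b) $\omega$ a transposition, requiring $i$ even; (c) $\omega$ a $3$-cycle, requiring $i$ odd. In each case I exhibit, via Proposition \ref{prop_ded_d_cover} applied to an equation of the form (\ref{eq_explicit_general_cover}), a base cover whose tame inertia at $\infty$ realises one specific pair $(i_0,\omega)$ of the correct cycle-type, and then vary $i$ through the admissible values by $[n]$-Kummer pullbacks of Definition \ref{def_Kummer_pullback}. The tuples $(s,r;n_1,\ldots,n_s;m_1,\ldots,m_r)$ are taken with $\sum n_i=p+3$, $\sum m_l=3$, all entries coprime to $p$; case (c) for instance is handled by $s=3$, $r=1$ with $(n_1,n_2,n_3,m_1)=(p-2,3,2,3)$, which falls under Lemma \ref{lem_Ass_holds}(4) for $p\ge 7$ and produces $i_0=1$ because $r+s-1=3$ is coprime to $p-1$ (here we use $p\equiv 2\pmod 3$). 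Cases (a) and (b) will be handled by ad hoc but analogous tuples, with $\alpha_i$'s and $\beta_l$'s chosen to satisfy Assumption \ref{ass_num} directly.

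Throughout, the group-theoretic identification proceeds as in the earlier theorems: primitivity of the Galois group $G$ follows from \cite[Remark 1.6]{Jones}, and $G$ contains $A_{p+3}$ by \cite[Theorem 1.2]{Jones} since $3\le t=3\le p-1$ for $p\ge 5$. Because the cycle length data is chosen so that $\gamma$ is odd in every case, one concludes $G=S_{p+3}$ rather than $A_{p+3}$. The congruence $p\equiv 11\pmod{12}$ is used twice: $p\equiv 2\pmod 3$ gives $(p-1,3)=1$, so $\theta$ and the $3$-cycle $\omega$ generate a cyclic group of order $3(p-1)$ in which each prescribed $\langle\theta^i\omega\rangle$ appears as the unique cyclic subgroup of the correct order (needed to match the Kummer-pullback output with the desired inertia); and $p\equiv 3\pmod 4$ gives $p-1=2m$ with $m$ odd, so $j/2$ is odd whenever $j$ is an even divisor of $p-1$, which is precisely what is needed to produce all even $i$ in case (b) from a base cover with $i_0=2$ (exactly as in the last paragraph of the proof of Theorem \ref{thm_S_P+2_both_Sylow_non-trivial}).

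The main obstacle is the verification of Assumption \ref{ass_num} for the tuples not covered by Lemma \ref{lem_Ass_holds}, particularly in case (a) where $\omega=\tx{id}$ forces $r=3$, together with ensuring that the quantity $\gcd(r+s-1,p-1)$ divides every admissible target $i$ in the given parity class so that a single base cover plus Kummer pullbacks suffices. If a uniform choice of $(s,r)$ fails for some $p$ (for instance when $5\mid (p-1)$ and $r+s-1=5$), the cleanest fix is to split that case further, choosing a different tuple whose $r+s-1$ is coprime to $p-1$. Once the explicit equations and their Assumption \ref{ass_num} verifications are in place, the rest of the proof follows the pattern of Theorems \ref{thm_A_p+1} and \ref{thm_S_P+2_both_Sylow_non-trivial}, with Abhyankar's Lemma and the relevant Kummer pullback closing each case.
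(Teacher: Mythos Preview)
Your overall strategy is exactly the paper's: reduce via Remark \ref{rmk_reductions_Symm} to a single common $\gamma$, split into the three cycle-types of $\omega$, produce one base cover per type via Proposition \ref{prop_ded_d_cover}, and sweep the remaining exponents by Kummer pullback. The problem is your choice of $\gamma$. By handling case~(c) with $s=3$, $(n_1,n_2,n_3)=(p-2,2,3)$, you commit to a $\gamma$ of that cycle type for \emph{all} three cases. In case~(a), $\omega=\mathrm{id}$ forces $m_1=m_2=m_3=1$ and hence $r=3$; with your $s=3$ this gives $r+s-1=5$, so the base exponent is $i_0=5$ whenever $5\mid(p-1)$ (e.g.\ $p=11$), and then no Kummer pullback can ever reach $\langle\theta\rangle$ since $5\mid\gcd(5n,p-1)$ for every $n$. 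Your proposed fix (``choose a different tuple whose $r+s-1$ is coprime to $p-1$'') cannot be carried out inside case~(a): $r=3$ is forced, so the only way to change $r+s-1$ is to change $s$, which changes $\gamma$, which must then be changed consistently across all three cases for the patching to work.

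The paper sidesteps this by taking $s=1$ throughout, so $\gamma$ is the $(p+3)$-cycle. Then $r+s-1$ equals $3,2,1$ in cases (a),(b),(c), and $p\equiv 11\pmod{12}$ makes each of these interact correctly with $p-1$ (using $3\nmid(p-1)$ for (a) and $4\nmid(p-1)$ for (b)). The required instances of Assumption \ref{ass_num} are then Lemma \ref{lem_Ass_holds}(1) for (c), Lemma \ref{lem_Ass_holds}(3) for (b), and a direct check for (a) with $(\alpha_1,\beta_1,\beta_2,\beta_3)=(0,1,\tfrac{w-1}{2},-\tfrac{w+1}{2})$ where $w^2=p-3$. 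One further point you should make explicit: each Kummer pullback degree must be coprime to $|\gamma|=p+3$ so that $\langle\gamma\rangle$ survives at $0$; with the paper's $\gamma$ this follows from $\gcd(p-1,p+3)=\gcd(p-1,4)=2$ and the parity of the pullback degrees.
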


\begin{proof}
Let $p \equiv 11 \pmod{12}$. Consider the $(p+3)$-cycle $\gamma \coloneqq (1, \cdots, p+3)$ in $S_{p+3}$. In view of Remark \ref{rmk_reductions_Symm} we show that there is a connected $S_{p+3}$-Galois cover of $\bb{P}^1$ \'{e}tale away from $\{0,\infty\}$ such that $\langle \gamma \rangle$ occurs as an inertia groups above $0$ and $I = \langle \tau \rangle \rtimes \langle \beta \rangle$ occurs as an inertia group above $\infty$ where $\beta$ is of the following: $\beta = \theta^{i_1}$ for all odd integer $i_1|(p-1)$, $\beta = \theta^{i_2} (p+1, p+2)$ for any even integer $i_2|(p-1)$ and $\beta = \theta^{i_3} (p+1, p+2, p+3)$ for all odd integer $i_3|(p-1)$.

First set $s=1$, $r=3$, $m_1 = m_2 = m_3 =1$. Choose an element $w \in k$ such that $w^2 = p-3$. Then with the choice $(\alpha_1,\beta_1,\beta_2,\beta_3) = (0,1, \frac{w-1}{2}, -\frac{w+1}{2})$, Assumption \ref{ass_num} is satisfied. By Proposition \ref{prop_ded_d_cover} there is a connected $S_{p+3}$-Galois cover of $\bb{P}^1$ \'{e}tale away from $\{0,\infty\}$ such that $\langle \gamma \rangle$ occurs as an inertia groups above $0$ and since $p \equiv 2 \pmod{3}$, $I = \langle \tau \rangle \rtimes \langle \theta \rangle$ occurs as an inertia group above $\infty$. Since $i_1 | (p-1)$ is odd and $(p-1,p+3)= (p-1,4)=2$, after an $[i]$-Kummer pullback we obtain the required cover with $\beta = \theta^{i_1}$.

Now take $s=1$, $r=2$, $m_1 = 2$, $m_2 = 1$. Then by Lemma \ref{lem_Ass_holds} for the choice $(\alpha_1,\beta_1,\beta_2) = (0,1,-2)$ Assumption \ref{ass_num} is satisfied. By Proposition \ref{prop_ded_d_cover} there is a connected $S_{p+3}$-Galois cover of $\bb{P}^1$ \'{e}tale away from $\{0,\infty\}$ such that $\langle \gamma \rangle$ occurs as an inertia groups above $0$ and $I = \langle \tau \rangle \rtimes \langle \theta^2 (p+1, p+2) \rangle$ occurs as an inertia group above $\infty$. As $p \equiv 3 \pmod{4}$ and $i_2 | (p-1)$ is even, $(p+3,i_2/2)=1$. After an $[i_2/2]$-Kummer pullback we obtain the required cover with $\beta = \theta^{i_2} (p+1, p+2)$.

Finally, take $s = 1 = r$. Then by Lemma \ref{lem_Ass_holds} Assumption \ref{ass_num} holds and by Proposition \ref{prop_ded_d_cover} there is a connected $S_{p+3}$-Galois cover of $\bb{P}^1$ \'{e}tale away from $\{0,\infty\}$ such that $\langle \gamma \rangle$ occurs as an inertia groups above $0$ and $I = \langle \tau \rangle \rtimes \langle \theta (p+1, p+2, p+3) \rangle$ occurs as an inertia group above $\infty$. Since $i_3$ is an odd divisor of $p-1$, via an $[i_3]$-Kummer pullback we obtain the required cover with $\beta = \theta^{i_3} (p+1, p+2, p+3)$.
\end{proof}

\bibliographystyle{amsplain}

\begin{thebibliography}{10}

\bibitem{6} Abhyankar S. S., \textit{Coverings of algebraic curves}, Amer. J. Math., 79, 825--856, 1957.

\bibitem{7} Abhyankar S. S., \textit{Galois theory on the line in nonzero characteristic}, Bull. Amer. Math. Soc. (N.S.), 27(1), 68--133, 1992.

\bibitem{Abh_01} Abhyankar S. S., \textit{Resolution of singularities and modular Galois theory}, Bull. Amer. Math. Soc. (N.S.), 38(2), 131--169, 2001.

\bibitem{BP} Bouw I., and Pries R., \textit{Rigidity, reduction, and ramification}, Math. Ann., 326(4), 803--824, 2003.

\bibitem{9} Bouw I., \textit{Covers of the affine line in positive characteristic with prescribed ramification}, In WIN-women in numbers, vol. 60 of Fields Inst. Commun., 193-200. Amer. Math. Soc., Providence, RI, 2011.

\bibitem{DK} Das S., Kumar M., \textit{On the Inertia Conjecture for alternating group covers},  arXiv:1711.07756v2; To appear in J. Pure Appl. Algebra.

\bibitem{2} Harbater D., \textit{Formal patching and adding branch points}, American Journal of Mathematics, 115, 487--508, 1993.

\bibitem{Ha_AC} Harbater, D., \textit{Abhyankar's conjecture on Galois groups over curves}, Invent. Math. 117, no. 1, 1--25, 1994.

\bibitem{Ha_two_patch} Harbater, D., \textit{Fundamental groups and embedding problems in characteristic {$p$}}, (English summary) Recent developments in the inverse Galois problem (Seattle, WA, 1993), 353--369, 
Contemp. Math., 186, Amer. Math. Soc., Providence, RI, 1995.

\bibitem{Ha_extn} Harbater D., \textit{Embedding problems with local conditions}, Israel J. Math., 118, 317--355, 2000.

\bibitem{Ha_embed_er} Harbater D., \textit{Correction and addendum to: "Embedding problems with local conditions''}, Israel J. Math. 118, 317--355, 2000.

\bibitem{AC_embed_Harbater} Harbater D., {Abhyankar’s Conjecture and embedding problems}, J. Reine Angew. Math. 559, 1--24, 2003.
 
\bibitem{survey_paper} Harbater D., Obus A., Pries R., Stevenson, K., \textit{Abhyankar's Conjectures in Galois Theory: Current Status and Future Directions}, Bull. Amer. Math. Soc. (N.S.) 55, no. 2, 239–-287, 2018.

\bibitem{Jones} Jones, G. A., \textit{Primitive permutation groups containing a cycle}, Bull. Aust. Math. Soc., 89, no. 1, 159--165, 2014.

\bibitem{15} Kumar M., \textit{Compositum of Wildly Ramified Extensions}, J. Pure Appl. Algebra 218, no. 8, 1528--1536, 2014.

\bibitem{8} Muskat J., Pries R., \textit{Alternating group covers of the affine line}, Israel J. Math., 187, 117--139, 2012.

\bibitem{12} Obus A., \textit{Toward Abhyankar’s Inertia Conjecture for } $PSL_2(l)$, Geometric and differential Galois theories, 195--206, Sémin. Congr., 27, Soc. Math. France, Paris, 2013. 

\bibitem{3} Raynaud M., \textit{Rev\^{e}tements de la droite affine en caract\'{e}ristique } $p > 0$ { et conjecture d'Abhyankar}, Invent. Math., 116(1-3), 425--462, 1994.

\bibitem{10} Rev\^{e}tements \'{e}tales et groupe fondamental (French), S\'{e}minaire de G\'{e}om\'{e}trie Alg\'{e}brique du Bois Marie 1960-1961 (SGA 1). Dirig\'{e} par Alexandre Grothendieck. Augment\'{e} de deux expos\'{e}s de Mich\'{e}le Raynaud. Lecture Notes in Mathematics, Vol. 224. Springer-Verlag, Berlin-New York, 1971.

\bibitem{Serre_loc} Serre J. P., \textit{Local fields}, vol. 67, Graduate Texts in Mathematics. Springer-Verlag, New York, 1979. Translated from the French by Marvin Jay Greenberg.

\bibitem{TGT} Serre J. P., \textit{Topics in Galois Theory}, Lecture notes prepared by Henri Damon [Henri Darmon]. With a foreword by Darmon and the author. Research Notes in Mathematics, 1. Jones and Bartlett Publishers, Boston, MA, 1992.

\end{thebibliography}

\end{document}